\theoremstyle{plain}
\patchcmd{\Ginclude@eps}{"#1"}{#1}{}{}
\definecolor{lightblue}{HTML}{1F88CD}
\definecolor{lightgrey}{HTML}{727272}
\definecolor{lightblue2}{HTML}{009EC1}
\definecolor{mypink}{HTML}{FD00B0}
\definecolor{lightred}{HTML}{ff4d4d}
\newtheorem*{theorem*}{Theorem}
\newtheorem{theorem}{Theorem}[section]
\newtheorem{corollary}[theorem]{Corollary}
\newtheorem{lemma}[theorem]{Lemma}
\newtheorem{conjecture}[theorem]{Conjecture}
\newtheorem{proposition}[theorem]{Proposition}
\theoremstyle{definition}
\theoremstyle{definition}
\newtheorem{definition}[theorem]{Definition}
\theoremstyle{definition}
\newtheorem{remark}[theorem]{Remark}
\theoremstyle{definition}
\theoremstyle{definition}
\theoremstyle{definition}
\theoremstyle{definition}
\theoremstyle{definition}
\theoremstyle{definition}
\newtheorem{question!}[theorem]{Question!}
\theoremstyle{definition}
\newcommand*\sbt{\mathpalette\sbt@{.75}}
\newcommand*\sbt@[2]{\mathbin{\vcenter{\hbox{\scalebox{#2}{$\m@th#1\bullet$}}}}}
\newcommand{\ra}{\rightarrow}
\newcommand{\GL}{\widetilde{\mathrm{GL}}^+(2,\mathbb{R})}
\newcommand{\xra}{\xrightarrow}
\newcommand{\sst}{\subset}
\newcommand{\bL}{\bm{\mathrm{L}}}
\newcommand{\D}{\mathrm{D}}
\newcommand{\ZZ}{\mathbb{Z}}
\newcommand{\CC}{\mathbb{C}}
\newcommand{\PP}{\mathbb{P}}
\newcommand{\K}{\mathrm{K}}
\newcommand{\gldim}{\mathrm{gldim}}
\newcommand{\homdim}{\mathrm{homdim}}
\newcommand{\Knum}{\mathrm{K}_{\mathrm{num}}(\mathcal{K}u(X))}
\renewcommand{\Re}{\operatorname{Re}}
\renewcommand{\Im}{\operatorname{Im}}
\DeclareMathOperator{\Aut}{Aut}
\DeclareMathOperator{\identity}{id}
\DeclareMathOperator{\rk}{rk}
\DeclareMathOperator{\Ext}{Ext}
\DeclareMathOperator{\Hom}{Hom}
\DeclareMathOperator{\RHom}{RHom}
\DeclareMathOperator{\ext}{ext}
\DeclareMathOperator{\Pic}{Pic}
\DeclareMathOperator{\cone}{cone}
\DeclareMathOperator{\Stab}{Stab}
\DeclareMathOperator{\Gr}{Gr}
\newcommand{\cA}{\mathcal{A}}
\newcommand{\cE}{\mathcal{E}}
\newcommand{\cH}{\mathcal{H}}
\newcommand{\cI}{\mathcal{I}}
\newcommand{\cJ}{\mathcal{J}}
\newcommand{\cT}{\mathcal{D}}
\newcommand{\Ku}{\mathcal{K}u}
\newcommand{\cP}{\mathcal{P}}
\DeclareMathOperator{\oh}{\mathcal{O}}
\title[Stability manifolds of Kuznetsov components of prime Fano threefolds]{Stability manifolds of Kuznetsov components of prime Fano threefolds} 
\subjclass[2010]{Primary 14F05; secondary 14J45, 14D20, 14D23}
\keywords{Derived categories, Bridgeland stability conditions, Kuznetsov components, Fano threefolds, Stability manifolds.}
\author{Changping Fan}
\address{College of Mathematics, Sichuan University, Chengdu, Sichuan Province 610064, P. R. China}
\email{leslie.fan1390@gmail.com}
\author{Zhiyu Liu}
\address{Institute for Advanced Study in Mathematics, Zhejiang University, Hangzhou, Zhejiang Province 310030, P. R. China}
\email{jasonlzy0617@gmail.com}
\author{Songtao Ma}
\address{Department of Mathematics, University of Michigan - Ann Arbor, Ann Arbor, Michigan 48104, United States}
\email{skenma@umich.edu}
\date{\today}
\begin{document}

\begin{abstract}
Let $X$ be a cubic threefold, quartic double solid or Gushel--Mukai threefold, and $\mathcal{K}u(X)\subset \mathrm{D}^b(X)$ be its Kuznetsov component. We show that a stability condition $\sigma$ on $\mathcal{K}u(X)$ is Serre-invariant if and only if its homological dimension is at most $2$.  As a corollary, we prove that all Serre-invariant stability conditions on $\mathcal{K}u(X)$ form a contractible connected component of the stability manifold.
\end{abstract}

\maketitle

{
\hypersetup{linkcolor=blue}
\setcounter{tocdepth}{1}
\tableofcontents
}

\section{Introduction}

Motivated by the concept of $\Pi$-stability condition on string theory by Douglas, the notion of a stability condition on a triangulated category $\cT$ was introduced by Bridgeland in \cite{bridgeland}. It is proved in \cite{bridgeland} that the set $\Stab(\cT)$ of all stability conditions (with respect to a fixed lattice) on $\cT$ has a natural structure of the complex manifold. In recent years, the geometry of $\Stab(\cT)$ has played a central role in the theory of homological mirror symmetry, representation
theory, symplectic geometry, and moduli of sheaves, see e.g.~\cite{haiden:flat-surface,bridgeland:quadratic-diff,bayer:derived-auto-K3,bayer:projectivity-k3,bayer:mmp-k3,halpern-leistern:NMMP,bridgeland:HPK-by-DT,bridgeland:geo-from-DT}.

One of the most fundamental examples is $\cT=\D^b(C)$ where $C$ is a smooth projective curve. Due to \cite{okada:stab-P1} and \cite{macri2007stability}, $\Stab(\D^b(C))$ is a contractible manifold and has a complete description. More general, there is
a folklore conjecture that if $\Stab(\cT)$ is non-empty, then it is simply connected. However, with current techniques, it still seems to be impossible to prove some basic properties of $\Stab(\cT)$, even for connectedness. Therefore, people found that it was better to study a distinguished connected component of $\Stab(\cT)$. For example, when $\cT=\D^b(X)$ for a smooth projective variety $X$, we say a component is distinguished if it contains a geometric stability condition, with respect to which skyscraper sheaves of closed points are all stable with the same phase. Results along this direction include \cite{bridgeland:stab-A2-quiver,bayer2016space,bayer:derived-auto-K3,fu:stab,li:stab-P2,qiu:stab-dynkin,qiu:stab-and-braid-group,dimitrov2019bridgeland,hannah:stab-free-abel-quotient,rekusi:contract-stab-surface}, etc.

The examples above mainly focus on the situation when $\cT$ is the bounded derived category of a certain smooth variety or the derived category of representations of a well-described quiver or algebra. On the other hand, by a series of work \cite{kuznetsov2003derived,kuznetsov2005derived,kuznetsov2009derived}, we can associate a very interesting triangulated subcategory $\Ku(X)$ of $\D^b(X)$ to a prime Fano threefold $X$, called \emph{the Kuznetsov component} of $X$ (cf.~Section \ref{section-Ku}), which are not of this form in many cases. They come from semiorthogonal decompositions and have been widely studied in recent times.  For instance, when $X\subset \PP^4$ is a smooth cubic threefold, $\Ku(X)$ is defined as the right orthogonal of line bundles $\oh_X$ and $\oh_X(1)$, which is a $\frac{5}{3}$-Calabi--Yau category in the sense of \cite{kuz:fractional-CY}, and is not equivalent to $\D^b(X)$ or the derived category of a nice quiver or algebra. And by a celebrated result of Orlov \cite{orlov:matrix-factor-equiv}, $\Ku(X)$ is also equivalent to the category of graded matrix factorization of $X$, which plays an important role in the theory of Landau--Ginzburg models. 

Moreover, for many prime Fano threefolds $X$, the category $\Ku(X)$ can be regarded as a non-commutative smooth projective curve for many reasons: it is a non-commutative smooth projective variety in the sense of \cite{orlov:noncomm-scheme}; the rank of its numerical Grothendieck group is the same as $\D^b(C)$; its Serre dimension is smaller than $2$, etc. Therefore, it is natural to study $\Stab(\Ku(X))$ and expect its geometry is similar to $\Stab(\D^b(C))$,

All prime Fano threefolds are classified in \cite{iskovskikh1999fano} by their index and degree $d$. When $X$ is of index $\geq 3$, or index $2$ and degree $d> 3$, or index $1$ and degree $d> 10$, $\Ku(X)$ is either equivalent to $\D^b(C)$ for a smooth projective curve $C$ or the derived category of representations of a generalized Kronecker quiver, whose $\Stab(\Ku(X))$ is contractible and has been studied in \cite{macri2007stability,dimitrov2019bridgeland}. When $X$ is of index $2$ and degree $d<2$ or index $1$ and degree $d<10$, $\Ku(X)$ behaves badly and it is not reasonable to regard $\Ku(X)$ as a non-commutative curve in these cases. Therefore, the only remaining examples are cubic threefolds (index $2$ and degree $d=3$), quartic double solids (index $2$ and degree $d=2$), and Gushel--Mukai threefolds (index $1$ and degree $d=10$). In these cases, skyscraper sheaves are not in $\Ku(X)$, so there is no notion of geometric stability conditions. However, we can consider the projection objects of skyscrapers into $\Ku(X)$, and these objects are stable with respect to \emph{Serre-invariant} stability conditions on $\Ku(X)$ (cf.~\cite{JLZ2022,liu:cat-torelli-delpezzo}). Then we say a connected component of $\Stab(\Ku(X))$ is distinguished if it contains a Serre-invariant stability condition (see Section \ref{sec-s-inv} for the definition).

By Remark \ref{stab-rmk}, $\Ku(X)$ contains a unique distinguished component. Our first theorem states that in these three cases, the distinguished connected component of $\Stab(\Ku(X))$ actually consists of Serre-invariant stability conditions and is contractible.

\begin{theorem}[{Corollary \ref{cor-component}}]\label{thm-1.1}
Let $X$ be a cubic threefold, quartic double solid or Gushel--Mukai threefold. Then the subspace of all Serre-invariant stability conditions is a contractible connected component of $\Stab(\Ku(X))$.
\end{theorem}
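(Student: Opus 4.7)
The plan is to deduce this from the paper's main technical result, which characterizes Serre-invariant stability conditions on $\Ku(X)$ as exactly those of homological dimension at most $2$. Write $\Stab^{\mathrm{sinv}}(\Ku(X))$ for the Serre-invariant locus. To show it is a contractible connected component of $\Stab(\Ku(X))$, I would establish three things: $\Stab^{\mathrm{sinv}}(\Ku(X))$ is open, it is closed, and it is homeomorphic to $\GL$.

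For openness, I would use the ``homological dimension $\leq 2$'' characterization: if $\sigma$ has homological dimension at most $2$, then any sufficiently close $\sigma'$ is obtained from $\sigma$ by a thin tilt combined with a small perturbation of the central charge, neither of which can produce new nonzero $\Ext^k$ in degree $k \geq 3$. Hence $\sigma'$ also has homological dimension $\leq 2$, and is again Serre-invariant by the main theorem. For closedness, suppose $\sigma_n \to \sigma$ with each $\sigma_n$ Serre-invariant, so $S_* \sigma_n = \sigma_n \cdot g_n$ for some $g_n \in \GL$, where $S$ is the Serre functor of $\Ku(X)$. Invoking the known fact (from \cite{JLZ2022}) that all Serre-invariant stability conditions form a single $\GL$-orbit with trivial stabilizer, one can fix a base point $\sigma_0$ and write $\sigma_n = \sigma_0 \cdot h_n$ for unique $h_n \in \GL$; continuity and freeness of the action then force $h_n$ to converge to some $h \in \GL$, so $\sigma = \sigma_0 \cdot h$ lies in the same orbit and is Serre-invariant. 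The third point is immediate: the free transitive $\GL$-action on $\Stab^{\mathrm{sinv}}(\Ku(X))$ gives a homeomorphism with $\GL$, which is well-known to be connected and contractible.

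I expect the main obstacle to be the closedness statement; specifically, showing that the $\GL$-action on the Serre-invariant orbit is proper, or equivalently, extracting a convergent subsequence from the $h_n$'s above. This will rely on the structural analysis of hearts of homological dimension $\leq 2$ developed in the proof of the main theorem, together with the explicit description of Serre-invariant hearts as tilts of a canonical one. A secondary subtlety is making the openness argument fully rigorous, since the homological dimension of a heart can a priori jump under tilting; here one expects the near-Calabi--Yau behavior of $\Ku(X)$ (fractional CY of dimension $\leq 2$) to give uniform control on Ext-groups across nearby tilts.
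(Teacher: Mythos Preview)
Your closedness argument has a genuine gap. You write that ``continuity and freeness of the action then force $h_n$ to converge to some $h \in \GL$,'' but freeness of a group action does not imply properness, and properness is exactly what is needed to conclude that a convergent sequence $\sigma_0 \cdot h_n \to \sigma$ forces the $h_n$ to converge in $\GL$. (Think of an irrational line on a torus: the $\mathbb{R}$-action is free but every orbit accumulates everywhere.) You correctly flag this as the main obstacle, but the sketch you give for resolving it---structural analysis of hearts and explicit tilts---does not supply a properness argument, and it is not clear how it would.

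The paper sidesteps this entirely by using the \emph{global} dimension $\gldim$ rather than the homological dimension. The point is that $\gldim \colon \Stab(\Ku(X)) \to \mathbb{R}_{\geq 0}$ is a continuous real-valued function (this is \cite[Lemma~5.7]{IQ:q-stability}), so $\gldim^{-1}([0,2])$ is automatically closed. Theorem~\ref{cri_thm} says this preimage is exactly the Serre-invariant locus, and closedness follows in one line. This is also why the paper insists on the equivalence of conditions (1), (2), and (3) in Theorem~\ref{cri_thm}: the $\gldim$ characterization is what makes the topological argument work, whereas $\homdim$ is integer-valued and has no obvious semicontinuity. Your openness argument via stability of $\homdim$ under small tilts is likewise shaky for this reason---objects in a tilted heart are two-term complexes in the old heart, so $\Ext^3$ in the new heart can a priori pick up contributions from lower Ext-groups in the old one. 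The paper instead takes openness from \cite[Remark~3.10]{pertusi2020some}, which uses that a $\GL$-orbit in a rank-two stability manifold is open for dimension reasons.
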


When $X$ is a quartic double solid, Theorem \ref{thm-1.1} follows from \cite[Theorem 1.3]{pertusi2020some} and Theorem \ref{unqiue}, in which a geometric description of the Serre functor of $\Ku(X)$ is used and can only work for quartic double solids. Our method is different from \cite[Theorem 1.3]{pertusi2020some} and can be generalized to other cases.

In our paper, we prove the following criterion of Serre-invariant stability conditions:

\begin{theorem}[{Theorem \ref{cri_thm}}]\label{thm-1.2}
Let $X$ be a cubic threefold, quartic double solid or Gushel--Mukai threefold, and $\sigma$ be a stability condition on $\Ku(X)$. Then the following conditions are equivalent:

\begin{enumerate}
    \item $\homdim(\sigma)\leq 2$,

    \item $\gldim(\sigma)\leq 2$, and

    \item $\sigma$ is Serre-invariant.
\end{enumerate}
\end{theorem}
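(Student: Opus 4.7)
My plan is to prove the cycle $(2)\Rightarrow(1)\Rightarrow(3)\Rightarrow(2)$.

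The implication $(2)\Rightarrow(1)$ is immediate from the definitions: every $\sigma$-stable object is $\sigma$-semistable, so $\homdim(\sigma)$ is a supremum over a subclass of what defines $\gldim(\sigma)$, giving $\homdim(\sigma)\leq\gldim(\sigma)\leq 2$.

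For $(3)\Rightarrow(2)$, suppose $\sigma$ is Serre-invariant, so that $S\cdot\sigma=\sigma\cdot\tilde g$ for some $\tilde g\in\GL$. Then the Serre functor $S$ of $\Ku(X)$ shifts the phases of all $\sigma$-semistable objects by a fixed constant $s$, whose value is dictated by the fractional Calabi--Yau structure of $\Ku(X)$: one has $s=5/3$ for the cubic and Gushel--Mukai threefold cases (where $S^{3}\cong[5]$) and $s=2$ for the quartic double solid (where $S^{2}\cong[4]$), so $s\leq 2$ uniformly. For semistable $E,F$ and $k$ with $\Ext^{k}(E,F)\neq 0$, ordinary semistability gives $\phi(F)+k-\phi(E)\geq 0$, while Serre duality $\Ext^{k}(E,F)^{*}\cong\Ext^{-k}(F,S(E))$ combined with semistability of $S(E)$ (of phase $\phi(E)+s$) gives $\phi(F)+k-\phi(E)\leq s$. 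Hence $\gldim(\sigma)\leq s\leq 2$.

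The main content, and the main obstacle, is the implication $(1)\Rightarrow(3)$. My strategy is to deduce that, under $\homdim(\sigma)\leq 2$, the stability condition $\sigma$ already lies in the $\GL$-orbit of a Serre-invariant one. After acting by $\GL$ to normalize, I plan to identify distinguished objects of $\Ku(X)$---for instance the projections into $\Ku(X)$ of the line bundles $\oh_{X}(i)\in\D^{b}(X)$, or the projections of ideal sheaves $I_{\ell}$ of lines $\ell\subset X$---whose $\sigma$-stability is forced by the homological-dimension bound. Concretely, if such a distinguished object $E_{0}$ were destabilized by a subobject $F$ with quotient $G$, a careful HN-filtration analysis (using Serre duality to convert negative Exts between $F,G$ into positive Exts involving $S(F),S(G)$) should yield a non-vanishing $\Ext^{i}$ with $i>2$ between $\sigma$-stable factors, contradicting $\homdim(\sigma)\leq 2$.

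Once enough distinguished $\sigma$-stable objects are produced, their phases together with the explicit action of $S_{*}$ on the rank-$2$ numerical Grothendieck group $\Knum$ pin down the central charge $Z$ up to the $\GL$-action, ultimately forcing the eigencharge relation $Z\circ S_{*}=\lambda\cdot Z$ that characterizes Serre-invariance on the level of central charges. Comparing $\sigma$ against a known Serre-invariant stability condition constructed in the literature (Bayer--Lahoz--Macr\`i--Stellari for the cubic threefold, Pertusi--Stellari for the quartic double solid, and the analogous construction in the Gushel--Mukai case) then places $\sigma$ in the same $\GL$-orbit, and so $\sigma$ is itself Serre-invariant. The hardest step is the ``positivity'' part of $(1)\Rightarrow(3)$: proving uniformly, across the three families of threefolds, that specific distinguished objects of $\Ku(X)$ are $\sigma$-stable under only the hypothesis $\homdim(\sigma)\leq 2$, since a priori destabilizing factors could occupy a wide range of phases and one must leverage the homdim bound together with Serre duality to confine them.
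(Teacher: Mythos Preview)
Your overall strategy for $(1)\Rightarrow(3)$ is in the same spirit as the paper's, but there are two concrete errors and one genuine gap.

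First, two factual issues in $(3)\Rightarrow(2)$. For Gushel--Mukai threefolds one has $S_{\Ku(X)}^{2}\cong[4]$, not $S^{3}\cong[5]$; the GM case behaves like the quartic double solid, not like the cubic. More importantly, for cubic threefolds the Serre functor does \emph{not} shift phases by a fixed constant: Serre-invariance only gives $\phi_{\sigma}(S(E))=g(\phi_{\sigma}(E))$ for an increasing $g$ with $g(x+1)=g(x)+1$, and since $S$ acts nontrivially on $\Knum$, $g$ need not be a translation. What you actually need (and what the paper uses, via Lemma~\ref{bound-phase}) is the uniform bound $g(\phi)\leq\phi+2$; this does follow from $g^{3}=\mathrm{id}+5$ (cubic) or $g^{2}=\mathrm{id}+4$ (QDS/GM), so your argument is salvageable, but as written the ``fixed constant $s$'' claim is false.

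The real gap is in $(1)\Rightarrow(3)$. You correctly identify that one must show certain distinguished objects (ideal sheaves of lines, etc.) are $\sigma$-stable, and the paper does this in Theorem~\ref{ext2_stable} via a spectral sequence argument rather than your proposed ``produce an $\Ext^{i}$ with $i>2$'' contradiction, which does not obviously go through. But the step you underestimate is what comes \emph{after}: knowing that $Z$ agrees (up to $\GL$) with the central charge of a Serre-invariant $\sigma'$ does \emph{not} by itself give $\sigma=\sigma'\cdot\tilde g$, because a stability condition is determined by its central charge \emph{and} its heart. The paper's Theorem~\ref{cri_thm_1} is precisely this missing step: it shows that any two stability conditions on $\Ku(X)$ with $\homdim\leq 2$ and the same central charge have the same heart (up to even shift). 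The proof is an induction on $\ext^{1}(E,E)$, using the Mukai-type inequality (Lemma~\ref{mukai lemma}/\ref{sigma_mukai}) and Lemma~\ref{in-same-heart} to control which shift of $E$ lies in the heart; this is the technical heart of the argument and is absent from your plan. Your sentence ``comparing $\sigma$ against a known Serre-invariant stability condition \ldots\ places $\sigma$ in the same $\GL$-orbit'' elides exactly this difficulty.
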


Here, $\homdim(\sigma)$ is the \emph{homological dimension} of $\sigma$ and $\gldim(\sigma)$ is the \emph{global dimension} of $\sigma$ (cf.~Section \ref{subsec-gldim}).

The implications (3) $\Rightarrow$ (2) and (2) $\Rightarrow$ (1) in Theorem \ref{thm-1.2} are easy. However, it is quite surprising for the authors that (1) implies (3), since (2) and (3) are preserved by $\GL$-action, while the condition (1) is not in general.

Finally, we conjecture that Theorem \ref{thm-1.2} holds for any stability condition on $\Ku(X)$ (see Conjecture \ref{conj}), which together with Theorem \ref{thm-1.1} implies $\Stab(\Ku(X))\cong \GL\cong \CC\times \mathbb{H}$.

\subsection*{Notation and conventions} \leavevmode
\begin{itemize}
    

    \item We denote the phase and slope with respect to a stability condition $\sigma$ by $\phi_{\sigma}$ and $\mu_{\sigma}$,  respectively. The maximal/minimal phase of the Harder--Narasimhan factors of a given object will be denoted by $\phi_{\sigma}^+$ and $\phi_{\sigma}^-$,  respectively.

    \item We use $\hom$ and $\ext^{i}$ to represent the dimension of the vector spaces $\Hom$ and~$\Ext^{i}$. We denote $\RHom(-,-)=\bigoplus_{i\in \ZZ} \Hom(-,-[i])[-i]$.

    \item All triangulated categories are assumed to be $\CC$-linear of finite type, i.e.~for any two objects $E,F$ we have $\sum_{i\in \ZZ} \ext^i(E,F)<+\infty$.
    
    \item We denote the numerical class in the numerical Grothendieck group by $[E]$ for any object $E$. In our setting, giving a numerical class is equivalent to giving a Chern character.

\end{itemize}

\subsection{Plan of the paper}

In Section \ref{section-Ku}, we introduce Kuznetsov components $\Ku(X)$ and recollect some basic properties. In Section \ref{section-stability}, we recall some basic definitions and properties of Bridgeland stability conditions on $\Ku(X)$. Then we review the definition of global dimension and homological dimension, and prove some general preliminary results in Section \ref{section-pre}. Finally, we prove our main results Theorem \ref{cri_thm_1}, Theorem \ref{cri_thm} and Corollary \ref{cor-component} in Section \ref{section-main}.

\subsection*{Acknowledgements}
We would like to thank Chunyi Li and Alexander Perry for their careful reading and many useful suggestions. The third author would also like to thank Dima Arinkin and Andrei Caldararu for useful discussions. Part of the work was finished when the second author visited the Hausdorff Research Institute for Mathematics (HIM). He is grateful for the wonderful working environment and hospitality.

\section{Kuznetsov components} \label{section-Ku}


In this paper, we consider a smooth projective threefold $X$ with $-K_X$ ample and Picard rank one, i.e.~a prime Fano threefold. In the following, we are mainly concerned about three types of prime Fano threefolds:

\begin{itemize}
    \item cubic threefolds: $X\subset \PP^4$ is a smooth cubic hypersurface,

    \item quartic double solids: there is a double cover $X\to \PP^3$ branched along a smooth quartic surface, and

    \item Gushel--Mukai threefolds: a smooth intersection $$X=\mathrm{Cone}(\Gr(2,5))\cap Q,$$ 
    where $\mathrm{Cone}(\Gr(2,5))\subset \PP^{10}$ is the projective cone over the \text{Plücker} embedded Grassmannian $\Gr(2,5)\subset \PP^9$, and $Q\subset \PP^{10}$ is a quadric hypersurface in a linear subspace $\PP^{7}\subset \PP^{10}$.
\end{itemize}

When $X$ is a cubic threefold or quartic double solid, we have a semiorthogonal decomposition
\[\D^b(X)=\langle \Ku(X),\oh_X,\oh_X(1)\rangle,\]
where $\oh_X(1)$ is the ample generator of $\Pic(X)$. 

When $X$ is a Gushel--Mukai threefold, there is also a semiorthogonal decomposition given in \cite{kuznetsov2018derived}:
\[\D^b(X)=\langle \Ku(X), \oh_X, \cE^{\vee}_X\rangle,\]
where $\cE_X$ is the pullback of the tautological subbundle on $\Gr(2,5)$ along $X\to \Gr(2,5)$.

\begin{definition} \label{kuznetsov components definition}
Let $X$ be a cubic threefold, quartic double solid, or Gushel--Mukai threefold.  The \emph{Kuznetsov component} of $X$ is the admissible triangulated subcategory $\Ku(X)\subset \D^b(X)$ constructed above. 
\end{definition}

Let $\K(\Ku(X))$ denote the Grothendieck group of $\Ku(X)$. We have the bilinear Euler form on $\K(\Ku(X))$ defined by
\[ \chi([E], [F]) = \sum_{i \in \ZZ} (-1)^i \ext^i(E, F) \]
for $[E], [F] \in \K(\Ku(X))$.
The \emph{numerical Grothendieck group} of $\Ku(X)$ is defined as $$\Knum := \K(\Ku(X)) / \ker \chi.$$ Therefore, the Euler form on $\K(\Ku(X))$ can be defined on $\Knum$ as well, which we also denote by $\chi(-,-)$.

The Serre functor of a Kuznetsov component can be computed from the Serre functor of $X$ and mutation functors. We define
\[\mathbf{O}:=\bL_{\oh_X}\circ (-\otimes \oh_X(1)),\]
where $\bL_{\oh_X}$ is the left mutation functor along $\oh_X$. When $X$ is a cubic threefold or a quartic double solid, it is an autoequivalence of $\Ku(X)$.

\begin{itemize}
    \item When $X$ is a cubic threefold, we have $S_{\Ku(X)}\cong \mathbf{O}[1]$. In this case, $S^3_{\Ku(X)}\cong [5]$. The functor $S_{\Ku(X)}$ acts non-trivially on $\Knum$.

    \item When $X$ is a quartic double solid or a Gushel--Mukai threefold, we have $S^2_{\Ku(X)}\cong [4]$. In this case we define $\tau:=S_{\Ku(X)}[-2]$. The functor $S_{\Ku(X)}$ (and hence $\tau$) acts trivially on $\Knum$.
\end{itemize}

Finally, we give a description of $\Knum$ and its Euler form.

When $X$ is a cubic threefold or a quartic double solid, a computation using \cite[pp.6]{kuznetsov2009derived} shows that, the rank two lattice $\Knum$ is generated by 
\[v=[\cI_l],~\text{and}~w=[\mathbf{O}(\cI_l)[1]],\]
where $\cI_l$ is the ideal sheaf of a line $l\subset X$. The Euler form $\chi(-,-)$ with respect to the basis $v$ and $w$ is given by the matrix
\begin{equation} \label{euler_form_cubic}
   \begin{pmatrix}  
-1 & -1 \\
0 & -1
\end{pmatrix}  
\end{equation}
when $X$ is a cubic threefold, and 
\begin{equation} \label{euler_form_quartic}
   \begin{pmatrix}  
-1 & 0 \\
0 & -1
\end{pmatrix}  
\end{equation}
when $X$ is a quartic double solid.

For a Gushel--Mukai threefold $X$, $\Knum$ is computed in \cite{kuznetsov2018derived} and \cite[pp.5]{kuznetsov2009derived}. The numerical Grothendieck group $\mathrm{K}_{\mathrm{num}}(\cA_X)$ is a rank 2 lattice with basis vectors
\[v=[\cI_C],~\text{and}~w=[F],\]
where $C\subset X$ is a conic and $F$ is a rank two slope-stable sheaf with $c_1(F)=-1, c_2(F)=5$ and $c_3(F)=0$. The Euler form  $\chi(-,-)$ with respect to the basis is 
\begin{equation} \label{euler_form_GM}
   \begin{pmatrix}  
-1 & 0 \\
0 & -1
\end{pmatrix} . 
\end{equation}

\section{Bridgeland stability conditions} \label{section-stability}

In this section, we recall basic definitions of Bridgeland stability conditions on triangulated categories. Then we focus on the stability conditions on Kuznetsov components. We follow \cite{bridgeland} and  \cite{bayer2017stability}.

\subsection{Bridgeland stability conditions}

Let $\cT$ be a triangulated category and $\K(\cT)$ its Grothendieck group. 

\begin{definition}
The \emph{heart of a bounded t-structure} on $\cT$ is an abelian subcategory $\cA \sst \cT$ such that the following conditions are satisfied:
\begin{enumerate}
    \item for any $E, F \in \cA$ and $n <0$, we have $\Hom(E, F[n])=0$;
    \item for any object $E \in \cT$ there exist objects $E_i \in \cA$ and maps
    \[ 0=E_0 \xrightarrow{\pi_1} E_1 \xrightarrow{\pi_2} \cdots \xra{\pi_m} E_m=E \]
    such that $\cone(\pi_i) = A_i[k_i]$ where $A_i \in \cA$ and the $k_i$ are integers such that $k_1 > k_2 > \cdots > k_m$.
\end{enumerate}
\end{definition}


Fix a surjective morphism $v \colon \K(\cT) \ra \Lambda$ to a finite rank lattice $\Lambda$.

\begin{definition}
A \emph{stability condition} on $\cT$ is a pair $\sigma = (\cA, Z)$ where $\cA$ is the heart of a bounded t-structure on $\cT$, and $Z : \Lambda \ra \CC$ is a group homomorphism such that 
\begin{enumerate}
    \item the composition $Z \circ v : \K(\cA) = \K(\cT) \ra \CC$ satisfies: for any $E\neq 0 \in \cT$ we have $\Im Z(v(E)) \geq 0$ and if $\Im Z(v(E)) = 0$ then $\Re Z(v(E)) < 0$. From now on, we write $Z(E)$ rather than $Z(v(E))$.
\end{enumerate}
We can define a \emph{slope} $\mu_\sigma$ for $\sigma$ using $Z$. For any $E \in \cA$, set
\[
\mu_\sigma(E) := \begin{cases}  - \frac{\Re Z(E)}{\Im Z(E)}, & \Im Z(E) > 0 \\
+ \infty , & \text{else}.
\end{cases}
\]
We say an object $0 \neq E \in \cA$ is $\sigma$-(semi)stable if $\mu_\sigma(F) < \mu_\sigma(E)$ (respectively $\mu_\sigma(F) \leq \mu_\sigma(E)$) for all proper subobjects $F \sst E$. 
\begin{enumerate}[resume]
    \item Any object $E \in \cA$ has a Harder--Narasimhan (HN) filtration in terms of $\sigma$-semistability;
    \item There exists a quadratic form $Q$ on $\Lambda \otimes \mathbb{R}$ such that $Q|_{\ker Z}$ is negative definite, and $Q(E) \geq 0$ for all $\sigma$-semistable objects $E \in \cA$.
\end{enumerate}
\end{definition}

\begin{definition}
The \emph{phase} of a $\sigma$-semistable object $E\in \cA$ is
\[\phi(E) := \frac{1}{\pi} \mathrm{arg}(Z(E)) \in (0,1].\]
Specially, if $Z(E) = 0$ then $\phi(E) = 1$. If $F = E[n]$, then we define 
\[\phi(F) := \phi(E) + n\]

A \emph{slicing} $\cP$ of $\cT$ consists of full additive subcategories $\cP(\phi) \subset \cT$ for each $\phi \in \mathbb{R}$ satisfying
\begin{enumerate}[resume]
\item for $\phi\in (0,1]$, the subcategory $\cP(\phi)$ is given by the zero object and all $\sigma$-semistable objects whose phase is $\phi$;
\item for $\phi + n$ with $\phi\in (0,1]$ and $n\in \mathbb{Z}$, we set $\cP(\phi + n) := \cP(\phi)[n]$.
\end{enumerate}
\end{definition}

We will use both notations $\sigma = (\cA,Z)$ and $\sigma = (\cP,Z)$ for a stability condition $\sigma$ with heart $\cA = \cP((0,1])$ where $\cP$ is the slicing of $\sigma$.

In this paper, we let $\cT$ be the Kuznetsov component and $\Lambda$ be the numerical Grothendieck group $\Knum$, which is $\K(\Ku(X))$ modulo the kernel of the Euler form $\chi(-, -)$. 

For a stability condition $\sigma=(\cA, Z)$ on $\cT$, we define the \emph{homological dimension} of $\cA$ as the smallest integer $\homdim(\cA)$ such that $\Hom(A,B[n])=0$ for any $n>\homdim(\cA)$. The homological dimension of a stability condition is defined as the homological dimension of its heart.

\subsection{The stability manifold}

The set of stability conditions $\Stab(\cT)$ has a natural topology induced by a generalized metric. Moreover, the following theorem of Bridgeland states that the generalized metric space $\Stab(\cT)$ is a complex manifold.

\begin{theorem}
\emph{(Bridgeland Deformation Theorem, \cite{bridgeland})} The continuous map $$\mathcal{Z}: \Stab(\cT) \rightarrow \Hom(\Lambda,\mathbb{C}),\quad (\cA,Z)\mapsto Z$$  is a local homeomorphism. In particular, the generalized metric space $\Stab(\cT)$ has the structure of a complex manifold of dimension $\rk(\Lambda)$.
\end{theorem}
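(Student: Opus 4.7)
The plan is to equip $\Stab(\cT)$ with the generalized (Bridgeland) metric
$$d(\sigma_1, \sigma_2) = \sup_{0 \neq E \in \cT} \bigl\{\,|\phi_{\sigma_1}^+(E) - \phi_{\sigma_2}^+(E)|,\ |\phi_{\sigma_1}^-(E) - \phi_{\sigma_2}^-(E)|,\ \|Z_1 - Z_2\|\,\bigr\},$$
for which continuity of $\mathcal{Z}$ is automatic. The real work is to show that $\mathcal{Z}$ is a local homeomorphism on a small ball $B_\varepsilon(\sigma)$ for each $\sigma = (\cP, Z)$, which splits into (a) local injectivity and (b) lifting deformations of the central charge to stability conditions.

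For (a), suppose $\sigma_1 = (\cP_1, Z)$ and $\sigma_2 = (\cP_2, Z)$ lie in a ball of radius $<\tfrac{1}{2}$. The metric estimate gives $\cP_1(\phi) \subseteq \cP_2\bigl((\phi - \tfrac{1}{2}, \phi + \tfrac{1}{2})\bigr)$ and the symmetric inclusion. For $E \in \cP_1(\phi)$, the $\sigma_2$-HN factors $F_i$ all have phases in $(\phi - \tfrac{1}{2}, \phi + \tfrac{1}{2})$. Combining the equality $Z(E) = \sum_i Z(F_i)$ with the reverse inclusion applied to each $F_i$ (each factor, being $\sigma_2$-semistable of phase close to $\phi$, is itself $\sigma_1$-semistable of nearby phase and so fits into the $\sigma_1$-filtration of $E$) forces the filtration to be trivial, and $\cP_1(\phi) = \cP_2(\phi)$ for every $\phi$.

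For (b), the deformation argument exploits the support property (axiom (3)). The quadratic form $Q$ yields a uniform bound $|Z(E)| \geq c\,\|v(E)\|$ on $\sigma$-semistable classes, so for all $W$ sufficiently close to $Z$ we simultaneously have $|W(E) - Z(E)| < \sin(\pi\varepsilon)\,|Z(E)|$ on every $\sigma$-semistable $E$ (for a chosen $\varepsilon \in (0, \tfrac{1}{8})$) and $Q|_{\ker W}$ still negative definite. I then build a new slicing $\cP'$ with central charge $W$ as follows: each quasi-abelian subcategory $\cP\bigl((\phi - \varepsilon, \phi + \varepsilon)\bigr)$ is of finite length (discreteness of semistable classes in any mass-bounded region follows from $Q$), so the slope induced by $W$ admits HN filtrations within it; declaring $\cP'(\phi)$ to be the $W$-semistable pieces of phase $\phi$ inside these overlapping strips and verifying coherence across $\phi$ produces a slicing of $\cT$, and the triple $(\cP', W, Q)$ is the desired stability condition $\sigma'$ with $\mathcal{Z}(\sigma') = W$.

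The main obstacle is constructing $\cP'$ and verifying that the $W$-HN data glues coherently across $\phi$. The support property is indispensable here: $Q$ both persists as a support witness under small perturbations of the central charge and enforces the discrete-mass bound needed to guarantee termination of the $W$-HN filtrations. Without it, one could not prevent semistable classes from accumulating toward $\ker W$, and the deformed HN structure would fail to exist. Once the local inverse of $\mathcal{Z}$ is constructed, its continuity follows directly from the defining metric inequalities, and pulling the complex-linear coordinates on $\Hom(\Lambda, \CC)$ back along $\mathcal{Z}$ endows $\Stab(\cT)$ with the asserted complex-manifold structure of dimension $\rk(\Lambda)$.
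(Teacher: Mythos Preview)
The paper does not prove this statement at all; it is quoted as the Bridgeland Deformation Theorem with a citation to \cite{bridgeland} and no argument is given. There is therefore no proof in the paper to compare your proposal against --- the authors simply invoke the result as background.

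Your sketch does follow the broad outline of Bridgeland's original argument, but several steps are stated too loosely to stand on their own. In part (a), the claim that each $\sigma_2$-HN factor $F_i$ of $E$ ``fits into the $\sigma_1$-filtration of $E$'' and thereby forces the filtration to be trivial is not a valid inference as written: knowing that each $F_i$ is $\sigma_1$-semistable of some phase near $\phi$ does not reconstruct the $\sigma_1$-HN filtration of $E$, and the identity $Z(E)=\sum_i Z(F_i)$ alone does not pin down all $\phi_i$ to equal $\phi$. Bridgeland's actual injectivity lemma works with distance $<1$ and argues by contradiction, playing a destabilising subobject in one slicing against the other. In part (b), the thin slices $\cP\bigl((\phi-\varepsilon,\phi+\varepsilon)\bigr)$ are quasi-abelian but not of finite length in general; existence of $W$-HN filtrations is obtained instead from a mass-bounding argument via the support property, and the coherence of the locally defined $W$-semistable pieces across overlapping intervals of $\phi$ is the genuinely delicate step, which your sketch acknowledges but does not carry out. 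None of this affects the present paper, which only needs the statement.
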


Next, we recall two natural group actions on 
$\Stab(\cT)$.
\begin{enumerate}
    \item An element $\tilde{g} = (g, G)$ in the universal covering $\widetilde{\mathrm{GL}}^+(2,\mathbb{R})$ of the group $\mathrm{GL}^+(2,\mathbb{R})$ consists of an increasing function $g: \mathbb{R} \rightarrow \mathbb{R}$ such that $g(\phi+1) = g(\phi) + 1$ and matrix $G\in \mathrm{GL}^+(2,\mathbb{R})$ with $\det(G)>0$. It acts on the right on the stability manifold by $\sigma \cdot \tilde{g} := (G^{-1}\circ Z,\cP(g(\phi))$ for any $\sigma = (\cP,Z)\in \Stab(\cT)$ (see \cite[Lemma 8.2]{bridgeland}).
    
    \item Let $\Aut_\Lambda(\cT)$ be the group of exact autoequivalences of $\cT$, whose action $\Phi_*$ on $\K(\cT)$ is compatible with $v\colon \K(\cT)\to \Lambda$. For $\Phi\in \Aut_\Lambda(\cT)$ and $\sigma=(\cP, Z) \in \Stab(\cT)$, we define a left action of the group of linear exact autoequivalences $\Aut_\Lambda(\cT)$ by $\Phi\cdot \sigma = (\Phi(\cP), Z\circ \Phi_*^{-1})$, where $\Phi_*$ is the automorphism of $\K(\cT)$ induced by $\Phi$.
\end{enumerate}

\subsection{Serre-invariant stability conditions on Kuznetsov components} \label{sec-s-inv}

\begin{theorem}[{\cite[Proposition 6.8]{bayer2017stability}}] \label{stab_Ku}
Let $X$ be a cubic threefold, quartic double solid or Gushel--Mukai threefold. Then there is a family of stability conditions on $\Ku(X)$ with respect to the rank two lattice $\Knum$ and natural surjection $\K(\Ku(X))\twoheadrightarrow \Knum$.
\end{theorem}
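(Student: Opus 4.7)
The plan is to construct the desired family by the now-standard method of inducing stability conditions from double-tilted weak stability conditions on $\D^b(X)$, following the general criterion of Bayer--Lahoz--Macrì--Stellari. First I would fix the ample generator $\oh_X(1)$ and, for parameters $\alpha>0$ and $\beta\in\mathbb{R}$, construct the tilt-stability heart $\Coh^{\beta}(X)\subset\D^b(X)$ together with the weak central charge
\[
Z_{\alpha,\beta}(E) \;=\; \tfrac{1}{2}\alpha^{2}H^{3}\,\ch_{0}^{\beta}(E) - H\cdot \ch_{2}^{\beta}(E) + i\bigl(H^{2}\cdot \ch_{1}^{\beta}(E)\bigr),
\]
where $H=c_{1}(\oh_X(1))$ and $\ch^{\beta}=e^{-\beta H}\ch$. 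This gives a family of weak stability conditions $\sigma_{\alpha,\beta}$ on $\D^b(X)$ satisfying the Bogomolov--Gieseker type inequality.

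Next, I would perform a second tilt: pick a suitable slope $\mu$ (depending only on $\beta$) and tilt $\Coh^{\beta}(X)$ at $\mu_{\sigma_{\alpha,\beta}}=\mu$ to obtain a new heart $\Coh^{0}_{\alpha,\beta}(X)$, and define a rotated central charge $Z^{0}_{\alpha,\beta}=\tfrac{1}{u}Z_{\alpha,\beta}$ for an appropriate $u\in\CC^{*}$. The resulting pair $\sigma^{0}_{\alpha,\beta}=(\Coh^{0}_{\alpha,\beta}(X),Z^{0}_{\alpha,\beta})$ is again a weak stability condition on $\D^b(X)$. The induced candidate on $\Ku(X)$ is then
\[
\cA(\alpha,\beta) \;=\; \Coh^{0}_{\alpha,\beta}(X)\cap \Ku(X), \qquad Z(\alpha,\beta) \;=\; Z^{0}_{\alpha,\beta}\big|_{\Knum}.
\]

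To verify that this pair is a genuine Bridgeland stability condition, I would apply the induction criterion of Bayer--Lahoz--Macrì--Stellari: it suffices to exhibit, inside a chosen region of parameters $(\alpha,\beta)$, that each object of the exceptional collection generating $\Ku(X)^{\perp}$ is $\sigma^{0}_{\alpha,\beta}$-semistable and has phase in the half-open interval $(0,1]$, and that the serre-dual statement also holds so that the heart $\cA(\alpha,\beta)$ is bounded. Concretely, for the cubic threefold and the quartic double solid one treats $\oh_X$ and $\oh_X(1)$, and for Gushel--Mukai threefolds one treats $\oh_X$ and $\cE_X^{\vee}$. The quadratic form $Q$ needed for the support property is inherited from the Bogomolov--Gieseker form on $\D^b(X)$ restricted to $\Knum$.

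The main obstacle will be the semistability check for the exceptional objects: one needs to rule out destabilizing subobjects in $\Coh^{\beta}(X)$ using the generalized Bogomolov--Gieseker inequality, and then, after tilting, to rule out destabilizing subobjects in $\Coh^{0}_{\alpha,\beta}(X)$. This requires a careful wall-and-chamber analysis in the upper-half $(\alpha,\beta)$-plane and hinges on the strengthened tilt-stability inequality for threefolds of Picard rank one (available in all three cases by the works of Bayer--Macrì--Stellari, Li, and subsequent authors on Fano threefolds). Once the criterion applies on a nonempty open subset $V\subset\{\alpha>0\}$, the pair $(\cA(\alpha,\beta),Z(\alpha,\beta))$ for $(\alpha,\beta)\in V$ gives the desired two-parameter family, and the $\GL$-action then produces an open subset of $\Stab(\Ku(X))$ of real dimension four, completing the construction.
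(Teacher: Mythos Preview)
The paper gives no proof of this statement; it simply quotes it from \cite[Proposition 6.8]{bayer2017stability}. Your outline is precisely the strategy of that reference (weak tilt-stability on $\D^b(X)$, a second tilt, and then the BLMS induction criterion applied to the exceptional collection), so it is correct and there is nothing further in the present paper to compare against.

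Two small sharpenings if you pursue this: the BLMS criterion does not literally ask that the $E_i$ be $\sigma^{0}_{\alpha,\beta}$-semistable with phase in $(0,1]$, but rather that each $E_i$ and an appropriate shift of $S_{\D^b(X)}(E_i)$ lie in the tilted heart, which is what guarantees that $\cA(\alpha,\beta)$ is the heart of a bounded t-structure on $\Ku(X)$; and since $\Knum$ has rank two, the support property amounts only to checking that the image of $Z(\alpha,\beta)$ is not contained in a real line, so invoking a quadratic form is superfluous here.
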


\begin{definition}\label{def-S-invariant}
Let $\sigma$ be a stability condition on a triangulated category $\cT$ with the Serre functor $S_{\cT}$. It is called \emph{Serre-invariant} if $S_{\cT} \cdot \sigma=\sigma \cdot \tilde{g}$ for some $\tilde{g}\in\widetilde{\mathrm{GL}}^+(2,\mathbb{R})$.
\end{definition}

By virtue of \cite[Corollary 5.5]{pertusi2020some} and \cite[Theorem 1.1]{Pertusi2021serreinv}, all stability conditions constructed in Theorem \ref{stab_Ku} are Serre-invariant.

We recall several properties of Serre-invariant stability conditions on Kuznetsov components.

\begin{proposition}\label{S_prop}
Let $X$ be a cubic threefold, quartic double solid or Gushel-Mukai threefold and $\sigma$ be a Serre-invariant stability condition on $\Ku(X)$.
\begin{enumerate}
    \item the homological dimension of the heart of $\sigma$ is two,

    \item For any $E$ and $F$ in $\Ku(X)$ with phases $\phi^+_{\sigma}(E)< \phi^-_{\sigma}(F)$, we have $\mathrm{Hom}(E, F[2])=0$.
    
    \item If $X$ is a cubic threefold and $E\in \Ku(X)$ is $\sigma$-semistable, then $\Ext^2(E, E)=0$.

    \item If $X$ is a quartic double solid or Gushel--Mukai threefold and $E\in \Ku(X)$ is $\sigma$-stable, then $\ext^2(E, E)\leq 1$.
    
    \item $\mathrm{ext}^1(E,E)\geq 2$ for every non-zero object $E\in \Ku(X)$.

    \item If $\ext^1(E, E)\leq 3$, then $E$ is $\sigma$-stable.
\end{enumerate}
\end{proposition}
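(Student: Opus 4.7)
Throughout, let $\tilde{g}=(g,G)\in \GL$ denote the element implementing Serre invariance $S_{\Ku(X)}\cdot \sigma=\sigma\cdot \tilde g$. The argument rests on first extracting a pointwise bound on the phase shift $g(\phi)-\phi$. From $S_{\Ku(X)}^3\cong [5]$ (cubic) and $S_{\Ku(X)}^2\cong [4]$ (quartic double solid and Gushel--Mukai) one reads off $g^3=t_5$ and $g^2=t_4$ respectively, where $t_n$ denotes deck translation by $n$. For quartic double solids and Gushel--Mukai threefolds, $S_{\Ku(X)}$ acts as the identity on $\Knum$ by the computations of Section~\ref{section-Ku}, so $G=I$ and $g$ is a deck translation; combined with $g^2=t_4$ this forces $g=t_2$. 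For cubic threefolds there is a Serre-invariant stability condition with $g=t_{5/3}$, and by the single-orbit property of Serre-invariant stability conditions (cf.~\cite{Pertusi2021serreinv, pertusi2020some}), any other Serre-invariant $\sigma=\sigma_0\cdot \tilde h$ has $g(\phi)=h^{-1}(h(\phi)+5/3)$; the normalisation $h(\phi+1)=h(\phi)+1$ and strict monotonicity of $h$ give $g(\phi)-\phi<2$. In all cases one obtains the uniform bound $g(\phi)-\phi\le 2$, with strict inequality in the cubic case.

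With this bound, parts (1) and (2) are immediate. Serre duality gives $\Hom(A,B[n])\cong \Hom(B, S_{\Ku(X)}A[-n])^*$, and $\phi_\sigma^+(S_{\Ku(X)}A[-n])=g(\phi_\sigma^+(A))-n\le \phi_\sigma^+(A)+2-n$. For (1), take $A,B\in \cA=\cP_\sigma((0,1])$ and $n\ge 3$: this upper bound is strictly less than $\phi_\sigma^-(B)>0$, so $\Hom(A,B[n])=0$; that the homological dimension equals exactly $2$ follows from any explicit non-vanishing $\Ext^2$ in the heart. For (2), the hypothesis $\phi_\sigma^+(E)<\phi_\sigma^-(F)$ combined with $g(\phi)-\phi\le 2$ gives $\phi_\sigma^+(S_{\Ku(X)}E[-2])\le \phi_\sigma^+(E)<\phi_\sigma^-(F)$, hence $\Hom(E,F[2])=0$.

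For (3) and (4), apply Serre duality to $\ext^2(E,E)=\hom(E, S_{\Ku(X)}E[-2])^*$. In the cubic case, $S_{\Ku(X)}E[-2]\cong \mathbf{O}E[-1]$, and the strict bound $g(\phi)-\phi<2$ yields $\phi_\sigma(\mathbf{O}E[-1])<\phi_\sigma(E)$, so $\Hom(E,\mathbf{O}E[-1])=0$ for any $\sigma$-semistable $E$. In the quartic/GM case, $S_{\Ku(X)}E[-2]\cong \tau E$, and the identities $g=t_2$ and $G=I$ imply $\tau\cdot\sigma=\sigma$; thus $\tau E$ is $\sigma$-stable of the same phase and numerical class as $E$, and stability gives $\hom(E,\tau E)\in\{0,1\}$, with value $1$ iff $\tau E\cong E$.

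Finally, (5) combines (3)/(4) with the Euler form: for $\sigma$-stable $E$ in the heart, the hom-dim bound gives $\chi(E,E)=1-\ext^1(E,E)+\ext^2(E,E)$, while the matrices~\eqref{euler_form_cubic}--\eqref{euler_form_GM} define negative-definite quadratic forms on $\Knum\otimes \mathbb{R}$, so $\chi(E,E)\le -1$ for any nonzero class; together with $\ext^2(E,E)\le 1$ this forces $\ext^1(E,E)\ge 2$, and the general case follows by shifting $E$ so its top cohomology lies in the heart and passing to a Jordan--H\"older constituent. For (6), were $E$ not $\sigma$-stable, the first step $0\to F\to E\to Q\to 0$ of its HN or Jordan--H\"older filtration, combined with the vanishings $\Hom(F,Q)=0$ (from $\phi_\sigma(F)\ge \phi_\sigma(Q)$) and $\Ext^2(Q,F)=0$ (from~(2)) in the long exact sequences for $\Ext^*(E,-)$ and $\Ext^*(-,E)$, yields $\ext^1(E,E)\ge \ext^1(F,F)+\ext^1(Q,Q)\ge 4$, contradicting $\ext^1(E,E)\le 3$. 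The main technical hurdle is the pointwise bound $g(\phi)-\phi\le 2$ itself: the relations $g^3=t_5$ and $g^2=t_4$ constrain only the average shift, and promoting this to a pointwise estimate relies on the single-orbit reduction in the cubic case.
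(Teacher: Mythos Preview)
The paper itself only cites external references for this proposition, so your sketch is essentially reconstructing the arguments of \cite{pertusi2020some,JLLZ,FeyzbakhshPertusi2021stab}. The overall architecture (phase bound $\Rightarrow$ (1)--(4), Euler form $\Rightarrow$ (5), weak Mukai lemma $\Rightarrow$ (6)) is correct, but there are two genuine issues.

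\textbf{The cubic phase bound does not need the single-orbit theorem.} Your final paragraph identifies the pointwise estimate $g(\phi)-\phi<2$ as the hurdle and invokes the existence of some $\sigma_0$ with $g_0=t_{5/3}$ together with the single-orbit result. The first claim is unjustified (for a generic Serre-invariant $\sigma$ the matrix $G$ is a genuine $\mathrm{GL}^+$ element, not the identity, so $g$ need not be a pure translation), and the second risks circularity since uniqueness is typically proved \emph{after} this proposition. In fact the bound follows directly from $g^{3}=t_5$ and monotonicity: if $g(\phi_0)\ge\phi_0+2$ then, using $g(\phi+n)=g(\phi)+n$, one gets $g^2(\phi_0)\ge g(\phi_0)+2\ge\phi_0+4$ and $g^3(\phi_0)\ge\phi_0+6>\phi_0+5$, contradicting $g^3=t_5$. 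The same argument gives the lower bound $g(\phi)>\phi+1$. So no appeal to a special $\sigma_0$ or to Theorem~\ref{unqiue} is needed.

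\textbf{Part (6), the strictly semistable step in the quartic/GM case.} When $E$ is not semistable your invocation of (2) is fine, since the top HN factor has strictly larger phase. But if $E$ is strictly $\sigma$-semistable and you pass to a Jordan--H\"older step $F\to E\to Q$, the phases coincide, and (2) does \emph{not} apply: you only get $\Hom(Q,F[2])=\Hom(F,\tau(Q))^*$, which need not vanish (the JH factors of $\tau(Q)$ could include $F$). In the cubic case the strict inequality $g(\phi)-\phi<2$ still forces $\phi_\sigma(S_{\Ku(X)}(Q)[-2])<\phi_\sigma(F)$, so the vanishing holds; but for quartic double solids and GM threefolds you need an extra argument. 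The standard fix is to observe that $\chi(E,E)\ge 1-3+0=-2$, so $[E]$ is primitive in $\Knum$; since the image of $Z$ is not contained in a real line, all JH factors of a semistable object of primitive class must be isomorphic to it, hence $E$ is already stable. (Alternatively, one runs the spectral-sequence argument as in the proof of Theorem~\ref{ext2_stable} rather than invoking (2) directly.) Your argument for (5) is also slightly roundabout: one does not need (3)/(4) or any reduction to stable objects, since for any $E$ in the heart $\ext^1(E,E)=\hom(E,E)+\ext^2(E,E)-\chi(E,E)\ge 1+0+1=2$ directly.
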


\begin{proof}
See \cite[Section 5]{pertusi2020some}, \cite[Section 4.7]{JLLZ} and \cite[Proposition 3.4]{FeyzbakhshPertusi2021stab}.
\end{proof}

\begin{lemma}\label{small-ext-stable}
Let $X$ be a cubic threefold, quartic double solid or Gushel-Mukai threefold, and $E\in \Ku(X)$ be a non-zero object with $\ext^1(E, E)\leq 3$. Then $E$ is stable with respect to any Serre-invariant stability condition and $[E]\in \Knum$ is primitive.
\end{lemma}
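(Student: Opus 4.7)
The plan is to dispatch the two claims separately. Stability for every Serre-invariant $\sigma$ is already recorded as Proposition \ref{S_prop}(6), so the real content lies in showing $[E]\in\Knum$ is primitive. For that I will estimate $\chi(E,E)$ in two ways: from the Ext-group bounds available for Serre-invariant stability conditions, and from the explicit Euler forms \eqref{euler_form_cubic}, \eqref{euler_form_quartic}, \eqref{euler_form_GM}.

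By Proposition \ref{S_prop}(6), the hypothesis $\ext^1(E,E)\leq 3$ already gives $\sigma$-stability of $E$ for every Serre-invariant $\sigma$; in particular $\hom(E,E)=1$, and $Z(E)\neq 0$ by the stability axioms applied to a shift of $E$ into the heart, so $[E]\neq 0$ in $\Knum$. The homological dimension bound $\homdim(\sigma)\leq 2$ from Proposition \ref{S_prop}(1) gives $\ext^3(E,E)=0$ after the same shift. Combining this with $\ext^2(E,E)=0$ in the cubic case (Proposition \ref{S_prop}(3)) or $\ext^2(E,E)\leq 1$ in the quartic double solid / Gushel--Mukai case (Proposition \ref{S_prop}(4)), together with $2\leq \ext^1(E,E)\leq 3$ coming from Proposition \ref{S_prop}(5) and the hypothesis, I obtain $\chi(E,E)\in\{-2,-1\}$ (the possibility $\chi(E,E)=0$ in the quartic/GM case is ruled out by $[E]\neq 0$, since $\chi$ is negative definite on $\Knum$).

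Now write $[E]=d\cdot u$ with $u=av+bw\in\Knum$ primitive, i.e.~$\gcd(a,b)=1$, and $d\geq 1$; the goal is $d=1$. Then $\chi(E,E)=d^{2}\chi(u,u)$, and the Euler forms \eqref{euler_form_cubic}, \eqref{euler_form_quartic}, \eqref{euler_form_GM} compute $-\chi(u,u)$ as the Eisenstein norm $a^{2}+ab+b^{2}$ in the cubic case and as the Gaussian norm $a^{2}+b^{2}$ in the other two. A direct check shows that $a^{2}+ab+b^{2}$ never represents $2$ (its small values on primitive vectors are $1,3,4,7,\dots$), while $a^{2}+b^{2}=2$ only at $(\pm1,\pm1)$, which is already primitive. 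Matching $d^{2}\cdot(-\chi(u,u))\in\{1,2\}$ therefore forces $d=1$ in every case, which is what was wanted.

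The only subtle point I anticipate is the bookkeeping in the middle paragraph: making sure that the combination of the various Ext-estimates from Proposition \ref{S_prop} really does constrain $\chi(E,E)$ to the narrow window $\{-2,-1\}$, and that the passage from the triangulated object $E$ to a representative in the heart of some Serre-invariant $\sigma$ is legitimate for applying both $\homdim(\sigma)\leq 2$ and the axioms forcing $Z(E)\neq 0$. Once those are pinned down, the primitivity follows from an elementary representability statement for two integral binary quadratic forms.
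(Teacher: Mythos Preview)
Your proposal is correct and follows essentially the same approach as the paper: invoke Proposition~\ref{S_prop}(6) for stability, deduce $\hom(E,E)=1$, combine parts (1), (3)--(5) of Proposition~\ref{S_prop} to pin down $\chi(E,E)\in\{-1,-2\}$, and then finish with the explicit Euler forms. The paper's proof is terser (it simply writes ``by a computation using Euler form''), whereas you spell out the negative-definiteness argument ruling out $\chi(E,E)=0$ and the $d^2\mid(-\chi(E,E))$ step; one harmless slip is that $4$ is not actually a value of $a^2+ab+b^2$ on \emph{primitive} vectors, but your argument only needs $-\chi(u,u)\geq 1$, which already forces $d^2\leq 2$ and hence $d=1$.
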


\begin{proof}
The stability of $E$ follows from (6) of Proposition \ref{S_prop}. Thus $\hom(E,E)=1$. Moreover, by (3) and (4) of Proposition \ref{S_prop}, we see $\chi(E, E)=-1$ when $X$ is a cubic threefold, and $\chi(E, E)=-1$ or $-2$ otherwise. Then by a computation using Euler form, we know that $[E]\in \Knum$ is primitive.
\end{proof}

\begin{lemma}\label{bound-phase}

Let $X$ be a cubic threefold, quartic double solid or Gushel--Mukai threefold. Suppose $\sigma$ is a Serre-invariant stability on $\Ku(X)$ and $E$ is a $\sigma$-semistable object.

\begin{enumerate}
    \item When $X$ is a cubic threefold, we have
    \[\phi_{\sigma}(E)+1\leq \phi_{\sigma}(S_{\Ku(X)}(E))<\phi_{\sigma}(E)+2.\]
    The first inequality is strict if $E$ is $\sigma$-stable.

    \item When $X$ is a quartic double solid or Gushel--Mukai threefold, we have
    \[\phi_{\sigma}(S_{\Ku(X)}(E))=\phi_{\sigma}(E)+2.\]
\end{enumerate}

\end{lemma}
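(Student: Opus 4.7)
Since $\sigma$ is Serre-invariant, there exists $\tilde g = (g, G) \in \GL$ with $S_{\Ku(X)} \cdot \sigma = \sigma \cdot \tilde g$. Comparing the two group actions on slicings gives $S_{\Ku(X)}(\cP(\phi)) = \cP(g(\phi))$, so any $\sigma$-semistable object of phase $\phi$ is sent to a $\sigma$-semistable object of phase $g(\phi)$. The lemma thus reduces to studying the increasing homeomorphism $g \colon \mathbb{R} \to \mathbb{R}$, subject to $g(\phi+1) = g(\phi)+1$, attached to $\sigma$ and $S_{\Ku(X)}$.

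The key input is that the identities $S_{\Ku(X)}^{n} \cong [m]$---with $(n,m)=(2,4)$ in case (2) and $(n,m)=(3,5)$ in case (1)---lift to equalities $\tilde g^{\,n} = \tilde s_{m}$ in $\GL$, where $\tilde s_m$ is the canonical $\GL$-lift of the shift $[m]$. Indeed, both $\tilde g^{\,n}$ and $\tilde s_m$ send $\sigma$ to $S_{\Ku(X)}^{\,n} \cdot \sigma = [m]\cdot \sigma$, so the equality follows from freeness of the $\GL$-action on stability conditions with surjective central charge: a stabilizer $\tilde h = (g_h, G_h)$ of $\sigma$ must have $G_h = I$ (since $G_h^{-1}Z = Z$ and $Z(\Knum)$ spans $\mathbb{C}$), and then $\cP(g_h(\phi)) = \cP(\phi)$ together with continuity of $g_h$ forces $g_h - \mathrm{id}$ to be a constant integer, pinned to $0$ by compatibility with $G_h$ modulo $2\mathbb{Z}$. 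In particular $g^{\,n}(\phi) = \phi + m$ for every $\phi \in \mathbb{R}$.

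In case (2), set $h := g - 2$. Then $h$ is an increasing homeomorphism with $h(\phi+1)=h(\phi)+1$ and $h \circ h = \mathrm{id}$; the only increasing involution of $\mathbb{R}$ is the identity (if $h(\phi_0)>\phi_0$, monotonicity gives $h(h(\phi_0))>h(\phi_0)>\phi_0$, contradicting $h \circ h = \mathrm{id}$), whence $g(\phi) = \phi + 2$ and the stated equality follows. In case (1), combine $g^3(\phi)=\phi + 5$ with the quasi-periodicity $g(\phi+k)=g(\phi)+k$ and strict monotonicity: if $g(\phi_0) \geq \phi_0 + 2$, iterating yields $g^3(\phi_0) \geq g(\phi_0)+4 \geq \phi_0 + 6$; if $g(\phi_0) \leq \phi_0 + 1$, iterating yields $g^3(\phi_0) \leq g(\phi_0)+2 \leq \phi_0 + 3$---both contradict $g^3(\phi_0)=\phi_0+5$. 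Therefore $1 < g(\phi)-\phi < 2$ everywhere, giving the claimed bounds (in fact strict on both sides for every $\sigma$-semistable object, which is a priori stronger than the statement). The only delicate point is the lifting $\tilde g^{\,n} = \tilde s_m$, handled by the freeness argument above; the remainder is an elementary real-variable calculation.
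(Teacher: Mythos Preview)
Your argument is correct and takes a genuinely different route from the paper. The paper's proof of (1) is a direct citation to \cite[Proposition 3.3(a),(d)]{FeyzbakhshPertusi2021stab}, while for (2) it first invokes the bounds $\phi_\sigma(E)+1\le \phi_\sigma(S_{\Ku(X)}(E))\le \phi_\sigma(E)+2$ from the same source, and then uses the special feature that $S_{\Ku(X)}$ acts trivially on $\Knum$: since $[S_{\Ku(X)}(E)]=[E]$, the phase difference lies in $2\ZZ$, hence must equal~$2$. Your approach instead works uniformly through the relation $S_{\Ku(X)}^n\cong [m]$, lifts it to $\tilde g^{\,n}=\tilde s_m$ in $\GL$ via freeness of the $\GL$-action, and reduces everything to an elementary analysis of the increasing function $g$. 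This is more self-contained (no external citation needed), treats both cases by the same mechanism, and in case (1) actually yields the \emph{strict} inequality $\phi_\sigma(E)+1<\phi_\sigma(S_{\Ku(X)}(E))$ for all $\sigma$-semistable $E$, which is sharper than the stated $\le$. What the paper's approach buys in case (2) is brevity once the numerical triviality of $S_{\Ku(X)}$ is on the table.

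One point you should make explicit: the freeness argument requires that $Z(\Knum)$ spans~$\CC$. This holds for every Serre-invariant $\sigma$ because, by Theorem~\ref{unqiue}, all such stability conditions lie in a single $\GL$-orbit containing the explicitly constructed ones of Theorem~\ref{stab_Ku}, whose central charges visibly span; alternatively, one can appeal to \cite[Remark 4.8]{pertusi2020some} or \cite[Lemma 4.7]{Pertusi2021serreinv}. With this in place, your sketch that a stabilizer $(g_h,G_h)$ has $G_h=I$ and then $g_h(\phi)-\phi\in 2\ZZ$ constant (by compatibility of $g_h$ with $G_h$ modulo $2\ZZ$ and continuity), forcing $g_h=\mathrm{id}$ since $\cP(\phi)=\cP(\phi+2k)=\cP(\phi)[2k]$ is impossible for $k\neq 0$ in a bounded setting, is the standard one.
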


\begin{proof}
(1) follows from \cite[Proposition 3.3(a),(d)]{FeyzbakhshPertusi2021stab}. In the case of (2), we also have $\phi_{\sigma}(E)+1\leq \phi_{\sigma}(S_{\Ku(X)}(E))\leq \phi_{\sigma}(E)+2$. Since $[S_{\Ku(X)}(E)]=[E]$, we have
\[\phi_{\sigma}(E)-\phi_{\sigma}(S_{\Ku(X)}(E))\in 2\ZZ.\]
Thus the we get $\phi_{\sigma}(S_{\Ku(X)}(E))=\phi_{\sigma}(E)+2$.
\end{proof}

Finally, we recall the following uniqueness result of Serre-invariant stability conditions.

\begin{theorem}[{\cite[Theorem 4.25]{JLLZ}, \cite[Theorem 3.1]{FeyzbakhshPertusi2021stab}}]
\label{unqiue}

Let $X$ be a cubic threefold, quartic double solid or Gushel--Mukai threefold. Then all Serre-invariant stability conditions on $\Ku(X)$ are in the same $\GL$-orbit.
\end{theorem}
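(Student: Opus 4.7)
The plan is to show that any two Serre-invariant stability conditions $\sigma_1=(\cA_1,Z_1)$ and $\sigma_2=(\cA_2,Z_2)$ differ by a unique element $\tilde g\in\GL$. I would proceed in three steps: match central charges via $\mathrm{GL}^+(2,\mathbb{R})$, use a rigid test object to fix the lift to $\GL$, then invoke Serre-invariance to upgrade this to equality of slicings.

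For the first step, since $\Knum$ has rank two and each $Z_i$ is a stability function (nonzero on any nonzero class of its heart), $Z_i$ extends to an $\mathbb{R}$-linear isomorphism $\Knum\otimes\mathbb{R}\to \CC$. There is thus a unique $g\in\mathrm{GL}^+(2,\mathbb{R})$ with $Z_2\circ g=Z_1$. For the second step, I would fix a rigid test object $E_0\in \Ku(X)$ with $\ext^1(E_0,E_0)\leq 3$: take $E_0=\cI_\ell$ for a line $\ell\subset X$ when $X$ is a cubic threefold or a quartic double solid, and $E_0=\cI_C$ for a conic $C\subset X$ when $X$ is Gushel--Mukai. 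By Lemma \ref{small-ext-stable}, $E_0$ is $\sigma_i$-stable for $i=1,2$ with primitive class $[E_0]\in \Knum$. Specify the function part of $\tilde g$ by requiring $\phi_{\sigma_2\cdot\tilde g}(E_0)=\phi_{\sigma_1}(E_0)$; after this adjustment we have $Z_1=Z_2$ and the phases of $E_0$ agree exactly.

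For the third step, I need to show $\cP_1=\cP_2$, which amounts to showing every $\sigma_1$-stable object is $\sigma_2$-stable with the same phase and conversely. Here Serre-invariance of both stability conditions provides the crucial rigidity. Any $\sigma_i$-stable object $F$ has $\phi_{\sigma_1}(F)\equiv \phi_{\sigma_2}(F)\pmod{\ZZ}$ because $Z_1=Z_2$, so the problem reduces to fixing the integer ambiguity. In the quartic double solid and Gushel--Mukai cases, $S_{\Ku(X)}$ acts trivially on $\Knum$ and Lemma \ref{bound-phase}(2) gives $\phi_{\sigma_i}(S_{\Ku(X)}(F))=\phi_{\sigma_i}(F)+2$ for every $\sigma_i$-stable $F$; iterating with $S_{\Ku(X)}^2=[4]$ and comparing with the matched phase of $E_0$ together with a second independent stable class (e.g.\ $[\mathbf{O}(\cI_\ell)]$, or the class $w$ of Section \ref{section-Ku}) pins the integer uniquely. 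In the cubic case Lemma \ref{bound-phase}(1) only gives the strict bound $\phi_\sigma(E)+1<\phi_\sigma(S_{\Ku(X)}(E))<\phi_\sigma(E)+2$; summing the three inequalities across an $S_{\Ku(X)}$-orbit and invoking $S_{\Ku(X)}^3=[5]$ forces equality of the sum, and thereby pins each individual shift to the unique integer compatible with $Z_i$.

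The main obstacle is this last integer-fixing step, in particular the cubic case where $S_{\Ku(X)}$ acts non-trivially on $\Knum$ and Lemma \ref{bound-phase}(1) offers only an open-interval bound. The route around it is to work with two stable test objects whose classes span $\Knum$ and exploit $S_{\Ku(X)}^3=[5]$ to close the window. Once every $\sigma_i$-stable object has a uniquely determined phase, equality of the locally finite slicings $\cP_1=\cP_2$ follows from the standard fact that a stability condition on a finite-type triangulated category is determined by its central charge together with the phases assigned to a spanning set of stable classes, yielding $\sigma_1=\sigma_2\cdot\tilde g$.
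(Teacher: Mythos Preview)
The paper does not prove this theorem itself; it is quoted from \cite{JLLZ} and \cite{FeyzbakhshPertusi2021stab}. Your Steps 1--2 are essentially fine (modulo justifying that the image of $Z_i$ is not contained in a real line, which follows e.g.\ from the $\sigma_i$-stability of $\cI_\ell$ and $S_{\Ku(X)}(\cI_\ell)$ via Proposition~\ref{S_prop}(6)).

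Step 3 has a genuine gap. You tacitly assume that every $\sigma_1$-stable object $F$ is also $\sigma_2$-stable and then describe the remaining task as ``fixing an integer ambiguity'' in $\phi_{\sigma_1}(F)-\phi_{\sigma_2}(F)$. But showing that the stable objects of the two stability conditions coincide is the heart of the theorem, not a preliminary. Even granting that, your integer-fixing argument does not work: in the cubic case, summing the three strict inequalities of Lemma~\ref{bound-phase}(1) over an $S_{\Ku(X)}$-orbit and using $S_{\Ku(X)}^3=[5]$ yields only the tautology $3<5<6$; at best, subtracting the $\sigma_1$- and $\sigma_2$-versions of the inequalities shows the integer shift is constant along a single Serre orbit, with no mechanism to propagate $n_{E_0}=0$ to a stable object $F$ lying in a different orbit. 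And the ``standard fact'' you close with---that a stability condition is determined by its central charge together with the phases on a spanning set of stable classes---is simply false; two stability conditions with the same central charge can have entirely different hearts, so equality of hearts must be proved directly. The proofs in the cited references do this by induction on $\ext^1(E,E)$: they use the Weak Mukai Lemma together with Proposition~\ref{S_prop}(5),(6) to reduce to the base case $\ext^1(E,E)=2$, where stability for both $\sigma_i$ is automatic. The present paper's Theorem~\ref{cri_thm_1} follows the same template (but note it logically depends on Theorem~\ref{unqiue} via Lemma~\ref{lem-D1D2}, so cannot be quoted here).
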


\begin{remark}\label{stab-rmk}
Let $\mathsf{K}$ be the subset of all Serre-invariant stability conditions with induced topology from $\Stab(\Ku(X))$. Then Theorem \ref{unqiue} implies $\mathsf{K}\cong \GL\cong \mathbb{C}\times \mathbb{H}$.
\end{remark}

\section{Preliminary results}\label{section-pre}

In this section, we provide some preliminary results.

We start with a useful lemma. We say a non-zero element $a$ in a lattice $\Lambda$ is \emph{primitive} if it can not be written as $a=nb$ for $b\in \Lambda$ and $n\in \ZZ_{>1}$.

\begin{lemma}\label{des-triangle}
Let $\sigma$ be a stability condition on $\cT$ with respect to the lattice $\Lambda$ and $v\colon K(\cT)\to \Lambda$. If a non-zero object $E\in \cT$ is not $\sigma$-stable and $v(E)\in \Lambda$ is primitive, then we can find an exact triangle
\[A\to E\to B\]
such that $A$ is $\sigma$-semistable, $\phi_{\sigma}(A)\geq \phi^+_{\sigma}(B)$ and $\Hom(A,B)=0$. Moreover, we can assume all Jordan--H\"older factors of $A$ are isomorphic to each other. 
\end{lemma}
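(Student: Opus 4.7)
The plan is to split into two cases depending on whether $E$ is $\sigma$-semistable or not, and in each case to construct $A$ as the maximal subobject of a suitable semistable piece of $E$ all of whose Jordan--H\"older factors are isomorphic to a single stable object. If $E$ is not $\sigma$-semistable, let $F\to E\to E'$ be the exact triangle from the Harder--Narasimhan filtration extracting the top HN factor, so that $F\in\cP(\phi)$ with $\phi=\phi^+_\sigma(E)$ and $\phi^+_\sigma(E')<\phi$. If instead $E$ is $\sigma$-semistable but not stable, take $F:=E$ and $E':=0$; in both cases $F$ lies in $\cP(\phi)$.

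Next, I would choose a stable Jordan--H\"older factor $G$ of $F$ inside $\cP(\phi)$ and set
\[
A\;:=\;\text{the maximal subobject of $F$ in $\cP(\phi)$ all of whose JH factors are isomorphic to $G$.}
\]
Such a maximum exists because $\cP(\phi)$ is of finite length and the collection of such subobjects is closed under finite sums: if $A_1,A_2\subset F$ both have JH factors iso to $G$, then $(A_1+A_2)/A_1\cong A_2/(A_1\cap A_2)$ inherits JH factors iso to $G$ from $A_2$. Put $B:=\cone(A\to E)$; applying the octahedral axiom to the composition $A\hookrightarrow F\to E$ yields a triangle $F/A\to B\to E'$. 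Because $F/A\in\cP(\phi)$ and the HN factors of $E'$ have phase strictly less than $\phi$, this gives $\phi^+_\sigma(B)\le\phi=\phi_\sigma(A)$.

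For $\Hom(A,B)=0$, the long exact sequence attached to the triangle $F/A\to B\to E'$ reduces matters to the two vanishings $\Hom(A,F/A)=0$ and $\Hom(A,E')=0$; the second is immediate from $\phi_\sigma(A)>\phi^+_\sigma(E')$. For the first, the key observation is that $F/A$ has no subobject isomorphic to $G$: any such $G\hookrightarrow F/A$ would lift to $A\subsetneq A'\subset F$ with $A'/A\cong G$, and then $A'$ would have all JH factors iso to $G$, contradicting the maximality of $A$. Consequently any nonzero map $A\to F/A$ would have a nonzero image $I\subset F/A$ whose JH factors are all iso to $G$; the socle of $I$ would then be a nonzero semisimple object of $\cP(\phi)$ with every simple summand iso to $G$, producing the forbidden $G$-subobject of $F/A$. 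Hence $\Hom(A,F/A)=0$, and the triangle then forces $\Hom(A,B)=0$.

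The primitivity hypothesis on $v(E)$ enters only to exclude the degenerate case $A=E$ in the semistable branch: $A=E$ would force $v(E)=k\,v(G)$ for some $k\ge 2$ (since $E$ itself is not stable), contradicting primitivity. The main subtlety I expect is choosing $A$ correctly: the naive ``$G$-isotypic part of the socle'' is generally too small -- a nonsplit self-extension of $G$ inside $F$ has socle of length one but both JH factors iso to $G$ -- and only the maximality of $A$ in the sense above guarantees that $F/A$ is free of $G$-subobjects, which is exactly what drives the $\Hom$-vanishing in the previous paragraph.
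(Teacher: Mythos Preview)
Your argument is correct and follows the same strategy as the paper: extract the top HN factor $F$ of $E$ (or $F=E$ in the semistable case), then inside $F$ carve out a subobject $A$ whose JH factors are all isomorphic and such that $\Hom(A,F/A)=0$, and finish with the octahedral axiom. The paper simply asserts that such an $A$ exists ``from the existence of the Jordan--H\"older filtration'', whereas you actually construct it as the maximal $G$-isotypic subobject and prove the $\Hom$-vanishing; in that sense your write-up is more complete than the paper's.

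One small repair is needed: as written you choose $G$ to be an arbitrary stable JH factor of $F$, but then nothing guarantees $A\neq 0$. (For instance, if $F$ is a nonsplit extension $0\to G_1\to F\to G_2\to 0$ with $G_1\not\cong G_2$ simple, then $G_2$ is a JH factor but $\Hom(G_2,F)=0$, so the maximal subobject with all JH factors iso to $G_2$ is zero.) The fix is immediate: take $G$ to be a simple \emph{subobject} of $F$, i.e.\ a summand of the socle, which exists because $\cP(\phi)$ is of finite length; then $G\subset A$ and $A\neq 0$. With that adjustment your proof is complete.
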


\begin{proof}
If $E$ is strictly $\sigma$-semistable, since $v(E)$ is primitive, Jordan--H\"older factors of $E$ can not be isomorphic to each other. Then the existence of exact triangle $A\to E\to B$ follows from the existence of Jordan--H\"older filtration of $A$. 

If $E$ is not $\sigma$-semistable, then by the existence of Harder--Narasimhan filtration, we can find an exact triangle $$A'\to E\to B'$$ such that $A'$ is $\sigma$-semistable and $\phi_{\sigma}(A')>\phi^+_{\sigma}(B')$. In particular, we see  $\Hom(A', B')=0$. If Jordan--H\"older factors of $A'$ are isomorphic to each other, then we take $A:=A'$ and $B:=B'$. If $A'$ has at least two non-isomorphic Jordan--H\"older factors, then as in the previous case, from the existence of Jordan--H\"older filtration of $A'$, we can find an exact triangle $A\to A'\to A''$ such that $A$ is $\sigma$-semistable with $\Hom(A, A'')=0$, and all Jordan--H\"older factors of $A$ are isomorphic to each other. We take $B:=\cone(A\to E)$, where the map $A\to E$ is the composition of $A\to A'$ and $A'\to E$. Hence we have the following commutative diagram
\[\begin{tikzcd}
	A & A & 0 \\
	{A'} & E & {B'} \\
	{A''} & B & {B'}
	\arrow[shift left, no head, from=1-1, to=1-2]
	\arrow[from=1-1, to=2-1]
	\arrow[from=2-1, to=3-1]
	\arrow[from=3-1, to=3-2]
	\arrow[from=3-2, to=3-3]
	\arrow[shift left, no head, from=2-3, to=3-3]
	\arrow[from=2-2, to=3-2]
	\arrow[from=1-2, to=1-3]
	\arrow[from=1-3, to=2-3]
	\arrow[from=1-2, to=2-2]
	\arrow[from=2-1, to=2-2]
	\arrow[from=2-2, to=2-3]
	\arrow[no head, from=2-3, to=3-3]
	\arrow[no head, from=1-1, to=1-2]
\end{tikzcd}\]
with all rows and columns are exact triangles. Since $\phi_{\sigma}(A)=\phi_{\sigma}(A')>\phi_{\sigma}^+(B')$, we obtain $\Hom(A, B')=0$. Hence from $\Hom(A, A'')=0$, we see $\Hom(A,B)=0$.
\end{proof}

We have the following standard spectral sequence, see e.g.~\cite[Lemma 2.27]{pirozhkov2020admissible}.

\begin{lemma}\label{spec-seq}
Let $X$ be a smooth projective variety. Suppose that there are two exact triangles in $\D^b(X)$:
\[A_1\to B_1\to C_1, \quad A_2\to B_2\to C_2.\]
There exist a spectral sequence which degenerates at $E_3$ and converges to $\Ext^*(C_1, C_2)$, with $E_1$-page
\[E^{p,q}_1= \left\{
	\begin{aligned}
	\mathrm{Ext}^q(B_1,A_2) & , ~ p=-1 \\
	\mathrm{Ext}^q(A_1,A_2)\oplus \mathrm{Ext}^q(B_1,B_2) & , ~ p=0 \\
	\mathrm{Ext}^q(A_1, B_2) & , ~ p=1 \\
	0 &, ~  p\notin [-1,1]
	\end{aligned}
	\right.\]
Moreover, differentials $d_1^{p,q}\colon E^{p,q}_1\to E^{p+r, q-r+1}_1$ are given by compositions with morphisms $A_1\to B_1$ and $A_2\to B_2$.
\end{lemma}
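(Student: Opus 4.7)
The overall plan is to realize $\RHom(C_1, C_2)$ as the totalization of a small $2 \times 2$ bicomplex of $\RHom$ complexes between the components $A_i, B_i$, and then extract the desired spectral sequence from its column filtration. This is a standard technique once one has passed to a dg- or $\infty$-categorical enhancement of $\D^b(X)$ (equivalently, chosen h-injective resolutions).

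First, I would represent each cone $C_i$ literally as the two-term complex $[A_i \xrightarrow{f_i} B_i]$, with $A_i$ placed in cohomological degree $-1$ and $B_i$ in degree $0$, so that the distinguished triangle $A_i \to B_i \to C_i$ is modeled by the standard stupid filtration. A direct degree count then shows that the external degree-$p$ piece of the total $\RHom$ complex is $\RHom(B_1, A_2)$ for $p = -1$, it is $\RHom(A_1, A_2) \oplus \RHom(B_1, B_2)$ for $p = 0$, it is $\RHom(A_1, B_2)$ for $p = 1$, and vanishes otherwise. The external differential $d^{\mathrm{ext}}$ is assembled with signs from precomposition with $f_1 \colon A_1 \to B_1$ and postcomposition with $f_2 \colon A_2 \to B_2$.

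Next, I would take the spectral sequence associated to the column filtration of this total complex by the external degree $p$. By construction its $E_1$-page is the internal cohomology at each external column, which is precisely the $E_1$-page stated in the lemma, and $d_1^{p,q}$ is induced by $d^{\mathrm{ext}}$, namely composition with the triangle maps $A_1 \to B_1$ and $A_2 \to B_2$. The standard double-complex argument then shows that the spectral sequence abuts to $\Ext^{p+q}(C_1, C_2)$.

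Finally, since the nonzero columns are confined to $p \in \{-1, 0, 1\}$, any higher differential $d_r \colon E_r^{p,q} \to E_r^{p+r,\, q-r+1}$ with $r \geq 3$ lands outside this range and therefore vanishes identically, giving degeneration at $E_3$. The main obstacle is purely bookkeeping: rigorously identifying $\RHom(C_1, C_2)$ with the total complex of the bicomplex up to signs and shifts requires a dg-enhancement or a careful diagram chase. Alternatively, one can derive the same structure by iteratively applying $\RHom(-, A_2)$, $\RHom(-, B_2)$, $\RHom(-, C_2)$ to the first triangle and $\RHom(A_1, -)$, $\RHom(B_1, -)$, $\RHom(C_1, -)$ to the second, producing a $3 \times 3$ grid of triangles whose total fiber is $\RHom(C_1, C_2)$ and unwinding via the octahedral axiom; or simply invoke \cite[Lemma 2.27]{pirozhkov2020admissible}.
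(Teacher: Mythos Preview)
Your proposal is correct and matches the paper's treatment: the paper does not give an independent proof but simply records this as a standard spectral sequence and cites \cite[Lemma 2.27]{pirozhkov2020admissible}, exactly the reference you invoke at the end. Your bicomplex/column-filtration sketch is the standard argument underlying that citation, so there is nothing to add.
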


The following lemma is a generalization of \cite[Lemma 2.5]{bayer:derived-auto-K3}.

\begin{lemma} \emph{(Weak Mukai Lemma)} \label{mukai lemma}
Let $X$ be a smooth projective variety and 
\[F\to E\to G\]
be an exact triangle in $\D^b(X)$ such that $\mathrm{Hom}(F,G)=\mathrm{Hom}(G,F[2])=0$. Then we have
\[\ext^1(F,F)+\ext^1(G,G)\leq \ext^1(E,E)\]
\end{lemma}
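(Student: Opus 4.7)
The plan is to feed the rotated triangle $G[-1]\to F\to E$ into the spectral sequence of Lemma \ref{spec-seq}, using it as both the first and second triangle. This yields a spectral sequence converging to $\Ext^{*}(E,E)$ and degenerating at $E_{3}$, whose $E_{1}$-page in the three nonzero columns $p\in\{-1,0,1\}$ reads
\[E_{1}^{-1,q}=\Ext^{q-1}(F,G),\qquad E_{1}^{0,q}=\Ext^{q}(F,F)\oplus \Ext^{q}(G,G),\qquad E_{1}^{1,q}=\Ext^{q+1}(G,F).\]
Since $\Ext^{1}(E,E)$ is filtered by the three graded pieces $E_{\infty}^{-1,2}$, $E_{\infty}^{0,1}$, $E_{\infty}^{1,0}$, it is enough to identify $E_{\infty}^{0,1}$ and bound from below.

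The two hypotheses $\Hom(F,G)=0$ and $\Hom(G,F[2])=0$ were chosen precisely so that the neighbouring terms $E_{1}^{-1,1}$ and $E_{1}^{1,1}$ vanish. Consequently both the incoming differential $d_{1}\colon E_{1}^{-1,1}\to E_{1}^{0,1}$ and the outgoing one $d_{1}\colon E_{1}^{0,1}\to E_{1}^{1,1}$ are zero, so that $E_{2}^{0,1}=E_{1}^{0,1}=\Ext^{1}(F,F)\oplus \Ext^{1}(G,G)$. The $d_{2}$-differentials into and out of $E_{2}^{0,1}$ land in the columns $p=-2$ and $p=2$, which lie outside the support range $[-1,1]$ and hence vanish automatically. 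By the $E_{3}$-degeneracy asserted in Lemma \ref{spec-seq}, one obtains
\[E_{\infty}^{0,1}=\Ext^{1}(F,F)\oplus \Ext^{1}(G,G),\]
and therefore
\[\ext^{1}(F,F)+\ext^{1}(G,G)=\dim E_{\infty}^{0,1}\leq \ext^{1}(E,E),\]
which is the desired inequality.

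No real obstacle arises in this approach; the argument is essentially forced once the correct rotation of the triangle is selected. The only modest point of care is to track the shift conventions in Lemma \ref{spec-seq} so that the two vanishing hypotheses land at exactly $E_{1}^{-1,1}$ and $E_{1}^{1,1}$ and not in nearby positions that would leave the differentials adjacent to $(0,1)$ unresolved. In particular, using the original triangle $F\to E\to G$ rather than the rotation shifts all indices and spoils this cancellation, which is the reason to rotate once at the outset.
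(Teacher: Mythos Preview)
Your proof is correct and follows essentially the same approach as the paper: both rotate the triangle to $G[-1]\to F\to E$, feed it into Lemma~\ref{spec-seq} with $A_1=A_2=G[-1]$, $B_1=B_2=F$, $C_1=C_2=E$, and use the two vanishing hypotheses to kill $E_1^{-1,1}$ and $E_1^{1,1}$, so that $E_1^{0,1}=E_\infty^{0,1}$. Your write-up is slightly more explicit about why $E_2^{0,1}$ survives to $E_\infty$, but the argument is the same.
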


\begin{proof}
Applying the standard spectral sequence in Lemma \ref{spec-seq} to the exact triangle
\[G[-1]\to F\to E,\]
i.e. take $A_1=A_2=G[-1], B_1=B_2=F$ and $C_1=C_2=E$, we have $E_1^{-1,1}=\Hom(F,G)$, $E_1^{0,1}=\Ext^1(G, G)\oplus \Ext^1(F, F)$ and $E_1^{1,1}=\Hom(G,F[2])$ and $E_1^{p,1}=0$ for $p\notin \{-1,0,1\}$. Then by the assumption we get $E_1^{0,1}=E_{\infty}^{0,1}$, which implies the result.
\end{proof}

Specializing in Kuznetsov components, we have the following lemma.

\begin{lemma} \label{sigma_mukai}
Let $X$ be a cubic threefold, quartic double solid or Gushel--Mukai threefold. Assume that there is an exact triangle of $E\in \Ku(X)$
\[F\to E\to G\]
such that $\mathrm{Hom}(F,G)=\mathrm{Hom}(G,F[2])=0$. Then we have 
\[\mathrm{ext}^1(F,F)<\mathrm{ext}^1(E,E),~\text{and}~\mathrm{ext}^1(G,G)<\mathrm{ext}^1(E,E).\]
\end{lemma}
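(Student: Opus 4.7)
The plan is to combine the Weak Mukai Lemma (Lemma \ref{mukai lemma}) with the lower bound on self-$\Ext^1$'s for nonzero objects of $\Ku(X)$ given in Proposition \ref{S_prop}(5).

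First, I would handle the degenerate cases. If $F = 0$, then $E \cong G$, and if further $E \neq 0$ then $\ext^1(F,F) = 0 < \ext^1(E,E)$ since $\ext^1(E,E) \geq 2$ by Proposition \ref{S_prop}(5); a symmetric remark handles $G = 0$. (If $E = 0$ then $F \cong G[-1]$ and the stated inequalities both become $\ext^1(G,G) < 0$, which forces us to assume $E \neq 0$; in practice the lemma is applied with $E \neq 0$.) From here on I would assume both $F$ and $G$ are nonzero objects of $\Ku(X)$.

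Next, apply the Weak Mukai Lemma (Lemma \ref{mukai lemma}) directly to the given triangle $F \to E \to G$, whose hypotheses $\Hom(F,G) = 0$ and $\Hom(G, F[2]) = 0$ are exactly what is needed. This yields
\[
\ext^1(F,F) + \ext^1(G,G) \leq \ext^1(E,E).
\]
Since $F$ and $G$ are both nonzero objects of $\Ku(X)$, Proposition \ref{S_prop}(5) gives $\ext^1(F,F) \geq 2$ and $\ext^1(G,G) \geq 2$. Substituting the lower bound for $G$ into the Mukai-type inequality gives $\ext^1(F,F) + 2 \leq \ext^1(E,E)$, hence $\ext^1(F,F) < \ext^1(E,E)$; the symmetric substitution for $F$ yields $\ext^1(G,G) < \ext^1(E,E)$.

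There is essentially no obstacle here beyond checking that Proposition \ref{S_prop}(5) really applies — this is where the particular geometry of $\Ku(X)$ for the three classes of prime Fano threefolds enters, since the bound $\ext^1(-,-) \geq 2$ for any nonzero object fails in $\D^b(\mathrm{pt})$-type examples. The whole argument is just the observation that in $\Ku(X)$ there are no ``small'' nonzero objects, so the weak additivity of $\ext^1$ from the Mukai lemma automatically promotes to strict inequality.
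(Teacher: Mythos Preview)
Your proof is correct and follows exactly the paper's approach: the paper's proof is the single line ``This follows from Lemma \ref{mukai lemma} and Proposition \ref{S_prop},'' which is precisely your combination of the Weak Mukai Lemma with the bound $\ext^1(-,-)\geq 2$ from Proposition \ref{S_prop}(5). One small remark on your degenerate case discussion: if $F=0$ and $G\neq 0$ then $E\cong G$ and the inequality $\ext^1(G,G)<\ext^1(E,E)$ actually fails, so the lemma tacitly assumes both $F$ and $G$ are nonzero (as in every application in the paper); you should simply state this hypothesis rather than attempt to verify the degenerate cases.
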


\begin{proof}
This follows from Lemma \ref{mukai lemma} and Proposition \ref{S_prop}.
\end{proof}

The following spectral sequence is constructed in \cite[Proposition 2.4]{okada:stab-cy-surface}.

\begin{proposition}\label{spec-seq-heart}
Let $\cT_0$ be the bounded derived category of an abelian category with enough injective objects, and $\cT\subset \cT_0$ be a full triangulated subcategory of finite type. Then for any heart $\cA$ of $\cT$ and objects $E,F\in \cT$, there exists a spectral sequence with 
\[E_2^{p,q}=\bigoplus_{i\in \ZZ} \Ext^p(\cH^i_{\cA}(E), \cH^{i+q}_{\cA}(F))\]
and converges to $\Ext^*(E,F)$.
\end{proposition}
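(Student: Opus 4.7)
The plan is to construct the spectral sequence from the two Postnikov filtrations of $E$ and $F$ coming from the bounded t-structure on $\cT$ with heart $\cA$, and then combine them into a single double filtration on $\RHom(E,F)$. Since $\cT$ is of finite type and the t-structure is bounded, both $E$ and $F$ have only finitely many nonzero cohomology objects $\cH^i_{\cA}$, so every filtration that appears is finite and convergence of the spectral sequence is automatic.

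First, I would assemble $F$ into a finite Postnikov tower using the exact triangles
\[\tau^{\leq n-1}F \to \tau^{\leq n}F \to \cH^n_{\cA}(F)[-n],\]
where $\tau^{\leq n}$ is the truncation of the t-structure on $\cT$ whose heart is $\cA$. Applying $\RHom(E,-)$ to this tower yields, via the standard exact couple attached to a finite filtration in a triangulated category, a convergent spectral sequence
\[\widetilde{E}_1^{n,q} = \Ext^q(E, \cH^n_{\cA}(F)) \;\Longrightarrow\; \Ext^{n+q}(E,F).\]
Next I would refine each term by applying the analogous contravariant filtration coming from the Postnikov tower of $E$: the triangles $\tau^{\leq m-1}E \to \tau^{\leq m}E \to \cH^m_{\cA}(E)[-m]$, after applying $\Hom(-,\cH^n_{\cA}(F)[q])$, produce finite filtrations on each group $\Ext^q(E,\cH^n_{\cA}(F))$ whose graded pieces are $\Ext^{q+m}(\cH^m_{\cA}(E),\cH^n_{\cA}(F))$ (using that $\Hom(\cH^m_{\cA}(E)[-m], G[q]) = \Ext^{q+m}(\cH^m_{\cA}(E), G)$).

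Combining the two filtrations into a single double filtration on the complex $\RHom_{\cT_0}(E,F)$, realized as a genuine complex via K-injective resolutions in $\cT_0$ (which has enough injectives, with the full inclusion $\cT \subset \cT_0$ identifying all relevant $\Ext$-groups), the associated spectral sequence of the total filtration has the stated bicomplex $E_2$-page. Setting $i = m$ for the position of a cohomology object of $E$ and $i+q = n$ for that of $F$, the internal differential cancels shifts and produces $\Ext^p(\cH^i_{\cA}(E),\cH^{i+q}_{\cA}(F))$ at total $p$-degree, while aggregation over all compatible pairs $(m,n)$ with $n-m = q$ gives the direct sum over $i$, yielding
\[E_2^{p,q} = \bigoplus_{i\in\ZZ} \Ext^p(\cH^i_{\cA}(E),\cH^{i+q}_{\cA}(F)) \;\Longrightarrow\; \Ext^{p+q}(E,F).\]

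The principal obstacle is the double book-keeping: one must identify the spectral sequence's $E_2$-page precisely as this bicomplex form, rather than as a finer or coarser page. A delicate point is that the heart $\cA \subset \cT$ need not have enough injectives on its own, so one cannot compute $\RHom$ internally to $\cA$; passing to $\cT_0$ circumvents this, provided one verifies that the t-structure truncations on $\cT$ are compatible with the chosen K-injective resolutions. Once that compatibility is in place, convergence and the identification of the $E_2$-page follow from the standard bicomplex argument.
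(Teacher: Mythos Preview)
The paper does not give its own proof of this proposition; it simply records the statement with a citation to \cite[Proposition 2.4]{okada:stab-cy-surface}. Your outline---using the Postnikov towers $\tau^{\leq \bullet}E$ and $\tau^{\leq \bullet}F$ from the bounded t-structure, combining them into a bifiltration on a genuine complex model for $\RHom_{\cT_0}(E,F)$ obtained via injective resolutions in $\cT_0$, and then reading off the spectral sequence of the total filtration---is exactly the standard construction and is essentially what Okada does. You have also correctly isolated the two genuine technical points: that one must work in the ambient $\cT_0$ because $\cA$ itself need not have enough injectives, and that pinning down the $E_2$-page (rather than $E_1$) requires keeping careful track of the reindexing $p = q+m$, $q' = n-m$ that you describe.
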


As a corollary, we see:

\begin{lemma}\label{ext1-heart}
Let $\cT_0$ be the bounded derived category of an abelian category with enough injective objects, and $\cT\subset \cT_0$ be a full triangulated subcategory of finite type. Then for any heart $\cA$ of $\cT$ with $\homdim(\cA)\leq 2$ and object $E\in \cT$, we have
\[\sum_{i\in \ZZ} \ext^1(\cH^i_{\cA}(E), \cH^i_{\cA}(E))\leq \ext^1(E,E).\]
\end{lemma}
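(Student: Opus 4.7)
The plan is to extract the desired inequality directly from the spectral sequence of Proposition \ref{spec-seq-heart}, applied with $F = E$. The right-hand side of the inequality is precisely $\dim E_2^{1,0}$, so it suffices to show that $E_2^{1,0}$ survives to $E_\infty^{1,0}$, since then it embeds as one of the graded pieces in the filtration on $\Ext^1(E,E)$ and hence is bounded above by $\ext^1(E,E)$.

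First I would write down which $(p,q)$ terms of total degree $p+q=1$ can contribute to $\Ext^1(E,E)$. Since $\Ext^p(A,B) = 0$ for $p<0$ trivially, and for $p \geq 3$ by the hypothesis $\homdim(\cA) \leq 2$, only the three positions $(0,1)$, $(1,0)$, $(2,-1)$ on the $E_2$-page are possibly non-zero.

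The key step is to check that the differentials in and out of the $(1,0)$ spot all vanish. The incoming differentials have the form $d_r \colon E_r^{1-r,\,r-1} \to E_r^{1,0}$ with $r \geq 2$, and their source has first index $1-r \leq -1$, hence is zero. The outgoing differentials have the form $d_r \colon E_r^{1,0} \to E_r^{1+r,\,1-r}$ with $r \geq 2$, and their target has first index $1+r \geq 3$; by the homological dimension bound $\homdim(\cA) \leq 2$, every summand $\Ext^{1+r}(\cH^i_{\cA}(E), \cH^{i+1-r}_{\cA}(E))$ vanishes. Consequently $E_\infty^{1,0} = E_2^{1,0} = \bigoplus_{i\in\ZZ} \Ext^1(\cH^i_{\cA}(E), \cH^i_{\cA}(E))$.

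Finally, since the spectral sequence converges to $\Ext^*(E,E)$, there is a filtration on $\Ext^1(E,E)$ whose graded pieces are exactly the $E_\infty^{p,q}$ with $p+q=1$, so
\[\sum_{i\in\ZZ}\ext^1(\cH^i_{\cA}(E), \cH^i_{\cA}(E)) \;=\; \dim E_\infty^{1,0} \;\leq\; \dim \Ext^1(E,E) \;=\; \ext^1(E,E),\]
which is the desired inequality. There is essentially no obstacle: the hypothesis $\homdim(\cA) \leq 2$ is precisely tailored to kill the only outgoing differentials out of $E_r^{1,0}$, making the argument a bookkeeping exercise on the $E_2$-page.
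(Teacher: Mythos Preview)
Your argument is correct and is essentially identical to the paper's proof: both observe that $E_2^{p,q}=0$ for $p\notin\{0,1,2\}$ by the bound $\homdim(\cA)\leq 2$, so $E_2^{1,0}=E_\infty^{1,0}$ and its dimension is bounded by $\ext^1(E,E)$. Note one small slip in your first paragraph: it is the \emph{left}-hand side of the inequality that equals $\dim E_2^{1,0}$, not the right-hand side (as your final displayed line correctly reflects).
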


\begin{proof}
Since $\homdim(\cA)\leq 2$, we see $E_2^{p,q}=0$ for $p>2, 0>p$ and any $q$. Therefore, we see $E^{1,q}_2=E^{1,q}_{\infty}$. If we take $q=0$, then we have $\dim E^{1,0}_{\infty}=\dim E^{1,0}_2\leq \ext^1(E,E)$, which proves the lemma.
\end{proof}

In the rest of our paper, we always set $\cT_0 = \D^b(\mathrm{QCoh}(X))$ and $\cT=\Ku(X)$.

\subsection{The global dimension function and homological dimension} \label{subsec-gldim}

First, we recall the definition of the homological dimension of a heart and stability condition.

\begin{definition}
Let $\cA$ be the heart of a bounded t-structure of a triangulated category $\cT$. For an integer $n>0$, we say $\cA$ has \emph{homological dimension at most $n$} and denote by $\homdim(\cA)\leq n$ if for any two non-zero objects $E,F\in \cA$, we have
\[\Hom(E,F[k])=0,\forall k>n.\]
We say a stability condition $\sigma$ on $\cT$ has \emph{homological dimension at most $n$} and denote by $\homdim(\sigma)\leq n$ if its heart $\cA$ satisfies $\homdim(\cA)\leq n$.
\end{definition}

There is another notion of dimension for stability conditions, called the global dimension, which is introduced in \cite{IQ:q-stability}.

\begin{definition}
    Let $\sigma$ be a stability condition on a triangulated category $\cT$ with the slicing $\cP$, then the \textit{global dimension} of $\sigma$ is defined by
    \[\gldim(\sigma) := \sup\{\phi_2 - \phi_1|~\Hom(E_1,E_2) \ne 0,~E_i \in \cP(\phi_i),~i = 1~\text{or}~2\}\]
\end{definition}

Therefore, we have a function
\[\gldim\colon \Stab(\cT)\to \mathbb{R}_{\geq 0},\quad \sigma\mapsto \gldim(\sigma),\]
which is continuous by \cite[Lemma 5.7]{IQ:q-stability}.

In the following, we prove some basic properties of global dimension.

\begin{lemma} \label{equiv-cond}
Let $\sigma$ be a stability condition on a triangulated category $\cT$ and $n$ be a positive integer. Then the following are equivalent:

\begin{enumerate}
    \item For any non-zero objects $E$ and $F$ with $\phi_{\sigma}^+(E)<\phi_{\sigma}^-(F)$, we have $\Hom(E,F[n])=0$.

    \item For any $\sigma$-semistable objects $E$ and $F$ with $\phi_{\sigma}(E)<\phi_{\sigma}(F)$, we have $\Hom(E,F[n])=0$.

    \item $\gldim(\sigma)\leq n$.
\end{enumerate}

\end{lemma}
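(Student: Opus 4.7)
The plan is to establish the chain of implications $(1) \Rightarrow (2) \Rightarrow (3) \Rightarrow (1)$, observing that two of these are essentially tautological and the only real content lies in passing from semistable objects to arbitrary objects via Harder--Narasimhan (HN) filtrations.

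The implication $(1) \Rightarrow (2)$ is immediate: for a $\sigma$-semistable object $E$ we have $\phi_\sigma^+(E) = \phi_\sigma^-(E) = \phi_\sigma(E)$, so if $E,F$ are both semistable with $\phi_\sigma(E) < \phi_\sigma(F)$, the hypothesis of (1) applies and gives $\Hom(E,F[n]) = 0$.

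Next I would establish $(2) \Leftrightarrow (3)$ directly from the definitions. For $(3) \Rightarrow (2)$: if $\gldim(\sigma) \le n$ and $E,F$ are $\sigma$-semistable with $\phi_\sigma(E) < \phi_\sigma(F)$, then $F[n]$ is semistable of phase $\phi_\sigma(F) + n > \phi_\sigma(E) + n$, so any non-zero map $E \to F[n]$ would witness a pair in the slicing whose phase difference strictly exceeds $n$, contradicting $\gldim(\sigma) \le n$. For $(2) \Rightarrow (3)$: given a non-zero $\Hom(E_1, E_2)$ with $E_i \in \cP(\phi_i)$, write $E_2 = F[k]$ with $F \in \cP(\phi_2 - k)$; if $\phi_2 - \phi_1 > n$ one can rescale $k$ to reach a contradiction with (2) applied to $E_1$ and $F$.

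The main (though still routine) step is $(2) \Rightarrow (1)$, which I would handle by induction on the total length $\ell(E) + \ell(F)$ of the HN filtrations of $E$ and $F$. When both objects are semistable the statement is exactly (2). Otherwise, say $\ell(E) \ge 2$, and take the HN triangle
\[ E' \to E \to E'' \]
where $E'$ is the maximal-phase semistable factor (so $\phi_\sigma(E') = \phi_\sigma^+(E)$) and $E''$ has HN factors of strictly smaller phases. Both $E'$ and $E''$ satisfy $\phi_\sigma^+(-) \le \phi_\sigma^+(E) < \phi_\sigma^-(F)$, so the inductive hypothesis yields $\Hom(E',F[n]) = \Hom(E'',F[n]) = 0$. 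The long exact sequence obtained by applying $\Hom(-, F[n])$ then forces $\Hom(E, F[n]) = 0$. The symmetric case $\ell(F) \ge 2$ is handled identically using the HN triangle of $F$ and the functor $\Hom(E, -)$.

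I don't anticipate any genuine obstacle here: each implication is a standard manipulation, and the only subtlety is being careful that the HN factors produced in the inductive step still satisfy the strict phase gap $\phi_\sigma^+(-) < \phi_\sigma^-(F)$, which is automatic because taking HN factors can only shrink the extremal phases.
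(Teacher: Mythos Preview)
Your proposal is correct and follows essentially the same route as the paper's proof: both arguments use HN filtrations to pass between (1) and (2), and both treat the equivalence of (2) and (3) as a direct consequence of the definition of $\gldim$. The only cosmetic differences are that the paper dispatches $(2)\Rightarrow(1)$ in one line (``no morphisms between HN factors'') rather than as a formal induction, and your phrasing ``rescale $k$'' in the $(2)\Rightarrow(3)$ step should simply be ``take $k=n$, so $F=E_2[-n]$ has phase $\phi_2-n>\phi_1$''---but this is exactly what the paper does too.
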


\begin{proof} (1) $\Rightarrow$ (2). It is clear by taking $E$ and $F$ to be semistable.

(2) $\Rightarrow$ (1). Since $\phi_{\sigma}^+(E)<\phi_{\sigma}^-(F)$, for any Harder--Narasimhan factors $E_i$ and $F_j$ of $E$ and $F$, respectively, we have $\phi_\sigma(E_i) < \phi_\sigma(F_j)$, and hence $\Hom(E_i,F_j[n]) = 0$. As there is no morphism between Harder--Narasimhan factors of $E$ and $F[n]$, we also get $\Hom(E, F[n]) = 0$.

(2) $\Rightarrow$ (3). Suppose $\gldim(\sigma) = n + \epsilon$ for some $\epsilon > 0$. This implies there exist $0<\delta \le \epsilon$, together with $\sigma$-semistable objects $E \in \cP(\phi_\sigma(E))$ 
and $F \in \cP(\phi_\sigma(F))$, such that $\phi_\sigma(F) - \phi_\sigma(E) > n+\delta$ and $\Hom(E,F) \ne 0$. But this implies $\Hom(E,(F[-n])[n]) \ne 0$, while $\phi_\sigma(F[-n]) - \phi_\sigma(E) = \phi_\sigma(F) - \phi_\sigma(E) - n > \delta > 0$, which contradicts to our hypothesis. 

(3) $\Rightarrow$ (2). Directly follows from the definition.
\end{proof}

\begin{lemma} \label{global_homo}
Let $\sigma=(\cA, Z)$ be a stability condition on a triangulated category $\cT$ and $\gldim(\sigma)\leq n$ for a positive integer $n$. Then the homological dimension of the heart $\cA$ of $\sigma$ is at most $n$, i.e.~$\Hom(E, F[m])=0$ for any two objects $E, F\in \cA$ and $m\geq n+1$.
\end{lemma}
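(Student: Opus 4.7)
The plan is to reduce the claim to the $\sigma$-semistable case, where the global dimension bound applies verbatim, and then promote the vanishing to arbitrary objects of the heart via Harder--Narasimhan filtrations. Since $\cA = \cP((0,1])$, every HN factor of an object of $\cA$ is $\sigma$-semistable with phase lying in $(0,1]$. Given $E,F \in \cA$ and $m \geq n+1$, pick HN factors $E_i \in \cP(\phi_i)$ of $E$ and $F_j \in \cP(\psi_j)$ of $F$. The shifted factor $F_j[m]$ lies in $\cP(\psi_j + m)$, so
\[
(\psi_j + m) - \phi_i \;\geq\; \psi_j + (n+1) - 1 \;=\; n + \psi_j \;>\; n,
\]
using $\phi_i \leq 1$, $m \geq n+1$, and crucially $\psi_j > 0$. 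Since $\gldim(\sigma) \leq n$, this forces $\Hom(E_i, F_j[m]) = 0$.

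To pass from HN factors to $E$ and $F$ themselves, I would iterate on the HN filtrations: each step produces an exact triangle inducing long exact sequences in $\Hom(-,F[m])$ and $\Hom(E,-)$, and the pairwise vanishing of $\Hom(E_i,F_j[m])$ propagates upward. A cleaner packaging is to invoke the already-established implication (3) $\Rightarrow$ (1) of Lemma \ref{equiv-cond} applied to the pair $(E,\,F[m-n])$: indeed $\phi^+_\sigma(E) \leq 1 \leq m-n < \phi^-_\sigma(F) + (m-n) = \phi^-_\sigma(F[m-n])$, so that lemma immediately yields $\Hom(E,F[m-n][n]) = \Hom(E,F[m]) = 0$.

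The argument is essentially formal and no step should present a real obstacle. The only subtlety worth flagging is the strict inequality $\psi_j > 0$ (rather than $\psi_j \geq 0$), which comes from the half-open convention $\cA = \cP((0,1])$ and is exactly what upgrades the non-strict bound $\gldim(\sigma) \leq n$ to vanishing for all shifts $\geq n+1$.
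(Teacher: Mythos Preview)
Your proposal is correct and your ``cleaner packaging'' via Lemma~\ref{equiv-cond} is essentially identical to the paper's argument: the paper shifts $E$ down by $m-n$ (writing $\Hom(E,F[m])=\Hom(E[-m+n],F[n])$ and checking $\phi^+(E[-m+n])\leq 0<\phi^-(F)$), whereas you shift $F$ up by $m-n$, but the content is the same, including the reliance on the strict lower bound $\phi^-(F)>0$ coming from $\cA=\cP((0,1])$. Your first approach via HN factors is also fine and merely unpacks what Lemma~\ref{equiv-cond} already encapsulates.
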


\begin{proof}
Since $E, F\in \cA$, we know that $\phi^+(E), \phi^-(F)\in (0,1]$. Then the vanishing $\Hom(E, F[m])=\Hom(E[-m+n], F[n])=0$ for $m\geq n+1$ follows from
\[\phi^+(E)-m+n=\phi^+(E[-m+n])\leq \phi^+(E)-1=\phi^+(E[-1]) \leq  0<\phi^-(F)\]
and Lemma \ref{equiv-cond}.
\end{proof}

\begin{lemma}\label{gldim-preserve}
Let $\sigma$ be a stability condition on a triangulated category $\cT$ with the slicing $\cP$ and $\gldim(\sigma)\leq n$ for an integer $n$. Then $\gldim(\sigma\cdot\tilde{g})\leq n$ for any $\tilde{g}\in \GL$. 
\end{lemma}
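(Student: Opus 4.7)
The plan is a direct unravelling of the $\GL$-action on slicings. Writing $\tilde{g} = (g, G) \in \GL$, the definition gives $\sigma \cdot \tilde{g} = (G^{-1} \circ Z, \cP')$ where the new slicing satisfies $\cP'(\phi) = \cP(g(\phi))$ for every $\phi \in \mathbb{R}$. In particular, an object $E$ is $(\sigma \cdot \tilde{g})$-semistable of phase $\phi$ if and only if $E$ is $\sigma$-semistable of phase $g(\phi)$. Nonzero morphisms in $\cT$ do not depend on the choice of stability condition, so comparing $\gldim$ on the two sides reduces to comparing phase gaps under $g$.

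I would then fix $(\sigma \cdot \tilde{g})$-semistable objects $E_1, E_2$ of phases $\phi_1, \phi_2$ with $\Hom(E_1, E_2) \neq 0$, and by the preceding remark regard them as $\sigma$-semistable of phases $g(\phi_1), g(\phi_2)$. The hypothesis $\gldim(\sigma) \leq n$, through Lemma \ref{equiv-cond}, gives $g(\phi_2) - g(\phi_1) \leq n$. Since $g$ is strictly increasing and satisfies $g(\phi + 1) = g(\phi) + 1$, iteration yields $g(\phi_1) + n = g(\phi_1 + n)$, so $g(\phi_2) \leq g(\phi_1 + n)$ forces $\phi_2 \leq \phi_1 + n$. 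Taking the supremum over such pairs $(E_1, E_2)$ produces $\gldim(\sigma \cdot \tilde{g}) \leq n$, as desired.

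There is no real obstacle here: the content of the statement is entirely captured by the two defining properties of the lift $g$ in $\GL$, namely strict monotonicity and commutation with the integer shift. Both are used essentially to convert an inequality on $g$-values into an inequality on phases, and this is precisely what makes global dimension, unlike homological dimension, a $\GL$-invariant quantity.
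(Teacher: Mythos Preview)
Your argument is correct and follows essentially the same route as the paper's own proof: both reduce the statement to the fact that the reparametrizing function $g$ is increasing with $g(\phi+n)=g(\phi)+n$, so that phase gaps bounded by the integer $n$ remain bounded by $n$ after the action. The only cosmetic difference is that you start from $(\sigma\cdot\tilde{g})$-semistable objects and pull back to $\sigma$, whereas the paper starts from $\sigma$-semistable objects and pushes forward; the content is identical.
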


\begin{proof}
Let $E_1\in \cP(\phi_1)$ and $E_2\in \cP(\phi_2)$ such that $\Hom(E_1, E_2)\neq 0$. Then $n\geq \gldim(\sigma)\geq \phi_2-\phi_1$. Now $\phi_{\sigma\cdot\tilde{g}}(E_i)=g(\phi_{\sigma}(E_i))$, where $g\colon \mathbb{R}\to \mathbb{R}$ be an increasing function and $g(x+n)=g(x)+n$ for any $n\in \ZZ$. Hence we get
\[\phi_{\sigma\cdot\tilde{g}}(E_2)=g(\phi_2)\leq g(\phi_1+n)=g(\phi_1)+n=\phi_{\sigma\cdot\tilde{g}}(E_1)+n.\]
This implies $\phi_{\sigma\cdot\tilde{g}}(E_2)-\phi_{\sigma\cdot\tilde{g}}(E_1)\leq n$, which gives $\gldim(\sigma\cdot\tilde{g})\leq n$.
\end{proof}

\section{A criterion of Serre-invariant stability conditions}\label{section-main}

In this section, we are going to prove the criteria Theorem \ref{cri_thm} and Theorem \ref{cri_thm_1}.

In the following, $X$ will be a cubic threefold, quartic double solid, or Gushel--Mukai threefold. We begin with some lemmas.

\begin{lemma} \label{ext2_inheart}
Let $\sigma=(\cA, Z)$ be a stability condition on $\Ku(X)$ with $\homdim(\cA)\leq 2$. Then for any non-zero object $E\in \Ku(X)$ with $\ext^1(E,E)\leq 3$, $E$ is in $\cA$ up to shift.
\end{lemma}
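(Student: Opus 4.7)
The plan is to control the $\cA$-cohomology objects $\cH^i_\cA(E)$ using the hypothesis $\homdim(\cA)\le 2$ together with the universal lower bound $\ext^1(-,-)\geq 2$ for nonzero objects in $\Ku(X)$, and show that at most one such cohomology object can be nonzero.

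\medskip

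First, I would apply Lemma~\ref{ext1-heart} to $E$. Since $\homdim(\cA)\le 2$ and $\cA$ is a heart of $\Ku(X)\subset \D^b(X)=\cT_0$, the lemma gives
\[
\sum_{i\in\ZZ}\ext^1\!\bigl(\cH^i_\cA(E),\,\cH^i_\cA(E)\bigr)\;\le\;\ext^1(E,E)\;\le\;3.
\]
Next, note that each cohomology $\cH^i_\cA(E)$ lies in $\cA\subset \Ku(X)$. By Proposition~\ref{S_prop}(5), any nonzero object $F\in \Ku(X)$ satisfies $\ext^1(F,F)\ge 2$. Hence if $\cH^i_\cA(E)$ and $\cH^j_\cA(E)$ were both nonzero for distinct $i,j$, the left-hand side above would be at least $4$, a contradiction. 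So at most one index $i_0$ has $\cH^{i_0}_\cA(E)\ne 0$. Since $E\ne 0$, exactly one is nonzero, and therefore $E\simeq \cH^{i_0}_\cA(E)[-i_0]$ lies in $\cA$ up to shift.

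\medskip

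The argument is essentially a one-line application of two previously established inputs, so there is no real obstacle; the only thing to check is that the hypotheses of Lemma~\ref{ext1-heart} are indeed in force, namely that $\cA$ is a heart of the ambient triangulated subcategory $\Ku(X)\subset \D^b(\QCoh(X))$ of finite type with $\homdim(\cA)\le 2$, which is exactly the setting fixed at the end of Section~\ref{section-pre}. No use of the stability condition $\sigma$ beyond the bound on $\homdim(\cA)$ is needed.
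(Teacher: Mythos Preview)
Your proof is correct and essentially identical to the paper's: both apply Lemma~\ref{ext1-heart} under the hypothesis $\homdim(\cA)\le 2$ to bound $\sum_i \ext^1(\cH^i_\cA(E),\cH^i_\cA(E))\le 3$, and then use Proposition~\ref{S_prop}(5) to force at most one nonzero cohomology object. There is nothing to add.
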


\begin{proof}
Let $N \in \mathbb{N}$ be the number of non-zero cohomology objects of $E$ with respect to $\cA$. By (5) of Proposition \ref{S_prop} and Lemma \ref{ext1-heart}, we get
\[3\geq \ext^1(E,E)\geq \sum_i \ext^1(\cH^i_{\cA}(E), \cH^i_{\cA}(E))\geq 2N.\]
which implies $N = 1$ and the result follows.

\end{proof}

\begin{lemma}\label{in-same-heart}
Let $\sigma=(\cA,Z)$ and $\sigma'=(\cA', Z')$ be stability conditions on $\Ku(X)$ with homological dimension at most $2$. Let $E_1$ and $E_2$ be two non-zero objects with $[E_1]=[E_2]\in \mathrm{K}_{\mathrm{num}}(\Ku(X))$ such that $E_1[m_1]$ and $E_2[m_2]$ are both in $\cA$. If $E_1[m'_1]$ and $E_2[m'_2]$ are also both in $\cA'$, then $m_1-m_2=m'_1-m'_2$.
\end{lemma}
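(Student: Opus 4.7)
The plan is to reformulate the claim as an equality $k_1 = k_2$ for a naturally defined pair of integers, show via the central charges that $k_1 - k_2$ is even, and then use the negative definiteness of the Euler form together with $\homdim \le 2$ for both hearts to rule out every nonzero even difference. Concretely, I would set $F_i := E_i[m_i] \in \cA$ and $k_i := m_i - m'_i$, so that $F_i \in \cA'[k_i]$, and the desired conclusion $m_1 - m_2 = m'_1 - m'_2$ becomes $k_1 = k_2$.

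The first step establishes parity. Since $F_i \in \cA$ is nonzero, the heart axiom forces $Z(F_i) \ne 0$, and $Z(F_i) = (-1)^{m_i} Z([E_1])$ using $[E_1] = [E_2]$. Both values $\pm Z([E_1])$ lying in the upper half-plane union the negative real axis forces $m_1 \equiv m_2 \pmod 2$, so $v := [F_1] = [F_2] \in \Knum$ is well-defined and nonzero. Applying the same argument to $E_i[m'_i] \in \cA'$ using $Z'$, I obtain $m'_1 \equiv m'_2 \pmod 2$, so $d := k_1 - k_2$ is even.

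Next I would compute $\chi(F_1, F_2) = \chi(v, v)$ by constraining the non-vanishing $\Ext$ groups from both heart perspectives. Because $\homdim(\cA) \le 2$ and $F_1, F_2 \in \cA$, the groups $\Hom(F_1, F_2[n])$ vanish unless $n \in \{0, 1, 2\}$. On the other hand $F_1 \in \cA'[k_1]$ and $F_2[n] \in \cA'[k_2+n]$, so using $\homdim(\cA') \le 2$ these same groups vanish unless $n \in \{d, d+1, d+2\}$. If $|d| \ge 3$ the two ranges are disjoint and $\chi(F_1, F_2) = 0$; if $d = 2$ only $\ext^2$ survives, giving $\chi(F_1, F_2) = \ext^2(F_1, F_2) \ge 0$; if $d = -2$ only $\ext^0$ survives, giving $\chi(F_1, F_2) = \ext^0(F_1, F_2) \ge 0$. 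Hence in every case with $d \ne 0$ I obtain $\chi(v, v) \ge 0$. But the Euler-form matrices \eqref{euler_form_cubic}, \eqref{euler_form_quartic}, \eqref{euler_form_GM} are all negative definite on $\Knum \otimes \mathbb{R}$, so $\chi(v, v) < 0$ for $v \ne 0$, a contradiction. Therefore $d = 0$.

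The main obstacle I anticipate is recognizing that the parity argument by itself only yields $d$ even; promoting this to $d = 0$ requires combining the homological-dimension hypothesis on \emph{both} hearts with the negative definiteness of the Euler form on $\Knum$. The latter is special to the three Fano types under consideration and supplies exactly the numerical rigidity needed to close the argument.
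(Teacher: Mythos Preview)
Your argument is correct, and it uses the same ingredients as the paper's proof---the parity constraint from the central charge, the condition $\homdim\le 2$ for \emph{both} hearts, and the fact that the self-pairing $v\mapsto\chi(v,v)$ is negative on nonzero classes---but it assembles them differently.

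The paper's proof does not intersect the two $\Ext$-support windows directly. Instead, it uses only the heart $\cA$ (together with $\chi(E_1,E_2)<0$) to show that
\[
N:=m_2-m_1+1
\]
is the unique integer maximizing $n\mapsto \hom(E_1,E_2[n])$, an invariant that does not mention any heart. Running the same observation with $\cA'$ yields $N=m_2'-m_1'+1$, hence the conclusion. Your approach, by contrast, fixes $d=k_1-k_2$, overlays the ranges $\{0,1,2\}$ and $\{d,d+1,d+2\}$, and reads off $\chi(v,v)\ge 0$ whenever $d\ne 0$ is even, contradicting negative definiteness. This is arguably more direct, while the paper's version has the small advantage of extracting an intrinsic characterization of $m_2-m_1$ (as one less than the unique maximizer of the $\hom$-function), which could be reused elsewhere. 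One minor remark: for the cubic threefold the Euler pairing \eqref{euler_form_cubic} is not symmetric, so the phrase ``negative definite'' should be understood as the statement that the associated quadratic form $\chi(v,v)=-(a^2+ab+b^2)$ is negative for $v\ne 0$; this is exactly what your final step uses.
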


\begin{proof}
Since $[E_1]=[E_2]$, we know that $m_1-m_2, m_1'-m_2'\in 2\ZZ$. Then we see that 
\[\chi(E_1,E_2)=\chi(E_1[m_1], E_2[m_2])=\chi(E_1[m_1'], E_2[m_2']) < 0.\]
Therefore if $E_1[m_1]$ and $E_2[m_2]$ are both contained in $\cA$, we have 
\[\chi(E_1[m_1], E_2[m_2])=\hom(E_1[m_1], E_2[m_2])-\hom(E_1[m_1], E_2[m_2+1])+\hom(E_1[m_1], E_2[m_2+2])<0.\]
This implies $m_2-m_1+1$ is the unique integer $N$ satisfies
\[\hom(E_1, E_2[N])>\hom(E_1,E_2[n])\]
for any $n\neq N$. On the other hand, the same argument also shows that $m_2'-m_1'+1=N$, so we obtain $m_1-m_2 = m'_1-m'_2$.
\end{proof}


\subsection{Proof of the main theorem}

First, we prove the stability of objects with small $\ext^1$.

\begin{theorem} \label{ext2_stable}
Let $\sigma=(\cA, Z)$ be a stability condition on $\Ku(X)$ with $\homdim(\cA)\leq 2$. Then for any non-zero object $E\in \Ku(X)$ with $\ext^1(E,E)\leq 2$ or $\ext^1(E,E)=3$ and $\ext^2(E,E)=0$, $E$ is $\sigma$-stable.
\end{theorem}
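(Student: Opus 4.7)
The plan is to argue by contradiction: I assume $E$ is not $\sigma$-stable and seek a contradiction. By Lemma \ref{ext2_inheart} the bound $\ext^1(E,E)\leq 3$ places $E$ in $\cA$ up to shift, and we may suppose $E\in\cA$. Lemma \ref{small-ext-stable} then gives that $[E]\in\Knum$ is primitive and $\hom(E,E)=1$, and a short Euler-form computation on $\Knum$ will show that under both cases of the hypothesis one in fact has $\ext^2(E,E)=0$. Using the primitivity of $[E]$, Lemma \ref{des-triangle} produces a short exact sequence $0\to A\to E\to B\to 0$ in $\cA$ with $A$ $\sigma$-semistable, $\phi_\sigma(A)\geq \phi_\sigma^+(B)$, $\Hom(A,B)=0$, and all Jordan--H\"older factors of $A$ isomorphic to a single stable object $A_0$. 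Since $A,B$ are non-zero in $\Ku(X)$, Proposition \ref{S_prop}(5) gives $\ext^1(A,A),\ext^1(B,B)\geq 2$.

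Next I would apply the spectral sequence of Lemma \ref{spec-seq} to the rotated triangle $B[-1]\to A\to E$. The hypotheses $\homdim(\cA)\leq 2$, $\Hom(A,B)=0$, and $\ext^2(E,E)=0$ force the vanishing of every $E_\infty^{p,q}$ with $p+q=2$, which yields $\ext^2(A,B)=0$ together with a rank identity expressing $\hom(B,A[2])$ in terms of the differentials $d_1^{0,1}$ and $d_2^{-1,2}$. Matching the spectral-sequence formula for $\ext^1(E,E)$ against the Euler-form identity $\chi(E,E)=\chi(A,A)+\chi(A,B)+\chi(B,A)+\chi(B,B)$ then produces the further vanishing $\Hom(B,A)=0$. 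The row $q=1$ finally yields the Mukai-type inequality
\[
\ext^1(A,A)+\ext^1(B,B)\leq \ext^1(E,E)+\hom(B,A[2]),
\]
so $\hom(B,A[2])\geq 4-\ext^1(E,E)\geq 1$.

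With the vanishings $\Hom(A,B)=\Hom(B,A)=\ext^2(A,B)=0$ in hand, I translate $\chi(A,B)=-\ext^1(A,B)\leq 0$ into a quadratic constraint on the integer parameters $n,\gamma,\delta$, where $[A_0]=\gamma v+\delta w$ and $[A]=n[A_0]$. Combined with the Euler matrices \eqref{euler_form_cubic}--\eqref{euler_form_GM}, the bound $\chi(A_0,A_0)\leq -1$, and the primitivity of $[E]$, this restricts $(n,\gamma,\delta)$ to a short list of integer triples. For quartic double solids and Gushel--Mukai threefolds the only surviving possibility is $[A]=[E]$, which forces $[B]=0$ and contradicts $B\in\cA\setminus\{0\}$ since $Z(B)\neq 0$ in the heart.

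The main obstacle is the cubic case, where the numerical analysis leaves an additional configuration (for instance $[A_0]=v-w$, $[B]=w$) that survives the constraint $\chi(A,B)\leq 0$. To eliminate it I would combine the lower bound $\hom(B,A[2])\geq 1$ with Serre duality for $S_{\Ku(X)}=\mathbf{O}[1]$, exploiting the non-trivial action of $\mathbf{O}$ on $\Knum$ together with the phase constraint $\phi_\sigma(A)\geq \phi_\sigma^+(B)$ to produce an incompatible positivity statement. This final step is the most delicate point of the argument: the inequality $\chi(A,B)\leq 0$ alone does not close the proof, and the reason (1)$\Rightarrow$(3) in Theorem \ref{thm-1.2} is surprising is precisely that the absence of a direct $\GL$-equivariance for $\homdim(\sigma)\leq 2$ must be compensated by these category-specific Serre-duality inputs.
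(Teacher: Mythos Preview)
Your overall architecture matches the paper's up through the spectral-sequence analysis and the conclusion $\Hom(A,B)=\ext^2(A,B)=\Hom(B,A)=0$, hence $\chi(A,B)\leq 0$. But the endgame has two genuine gaps.

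\textbf{Quartic double solid / Gushel--Mukai, $\ext^1(E,E)=3$.} Your assertion that ``the only surviving possibility is $[A]=[E]$'' is false here. With $[E]=v\pm w$ and the Euler form $-I_2$, the constraint $\chi(A,B)\leq 0$ also allows $\{[A],[B]\}=\{v,\pm w\}$ (indeed $\chi(A,B)=0$ then). This is exactly the paper's Case~2, and it is \emph{not} disposed of by numerics alone. The paper's argument is inductive on what has already been proved: from $\chi(A,B)=0$ one gets $\ext^1(A,B)=0$, so the surjection $E_1^{-1,2}\to E_1^{0,2}$ forces $\ext^2(A,A)=\ext^2(B,B)=0$; since $[A]$ is primitive, $A$ is $\sigma$-stable with $\ext^1(A,A)=2$, and by the already-established $\ext^1=2$ case $\tau(A)=S_{\Ku(X)}(A)[-2]$ is $\sigma$-stable as well. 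Lemma~\ref{in-same-heart} then puts $\tau(A)\in\cA$ with $\phi_\sigma(\tau(A))=\phi_\sigma(A)$. Now $\Ext^2(B,A)=\Hom(\tau(A),B)\neq 0$ (your Mukai inequality gives this), so $\tau(A)\hookrightarrow B$ in $\cA$, and a second application of the destabilizing-triangle/spectral-sequence argument to $B$ yields $\chi(A',B')>0$, a contradiction. None of this is visible in your outline.

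\textbf{Cubic threefold.} You correctly isolate the residual numerical configuration (e.g.\ $[A]=v-w$, $[B]=w$ when $[E]=v$), but the proposed fix via ``$\hom(B,A[2])\geq 1$ and an incompatible positivity statement'' is not how the paper closes, and it is not clear it can be made to work: Serre duality turns $\Ext^2(B,A)$ into $\Hom(A,S_{\Ku(X)}(B)[-2])$, but you have no control over the phase of $S_{\Ku(X)}(B)$ for $\sigma$. The paper instead observes that the surviving case has $\chi(A,B)=0$, hence $\ext^1(A,B)=0$ and (via $E_\infty^{0,2}=0$) $\ext^2(A,A)=0$. Then $A$ is $\sigma$-stable with $\ext^1(A,A)=2$, so by Lemma~\ref{small-ext-stable} it is stable for every Serre-invariant stability condition. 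Since $\Hom(A,E)=\Hom(E,S_{\Ku(X)}(A))^\vee\neq 0$ and $E$, $S_{\Ku(X)}(A)$ are Serre-invariant stable of the same numerical class, one gets the \emph{object} identification $A\cong S^{-1}_{\Ku(X)}(E)$. The contradiction is then immediate: $\Hom(A,E[-1])=\Hom(S^{-1}_{\Ku(X)}(E),E[-1])=\Ext^1(E,E)^\vee\neq 0$, impossible for $A,E\in\cA$. The crux is upgrading the numerical equality $[A]=[S^{-1}_{\Ku(X)}(E)]$ to an isomorphism, which your sketch does not do.
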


\begin{proof}
By Lemma \ref{small-ext-stable}, $E$ is stable with respect to any Serre-invariant stability condition on $\Ku(X)$ and  $[E]$ is primitive. Moreover, $\RHom(E,E)=\CC\oplus \CC^2[-1]$ when $X$ is a cubic threefold, and $\RHom(E,E)=\CC\oplus \CC^2[-1]$ or $\CC\oplus \CC^3[-1]$ when $X$ is a quartic double solid or Gushel--Mukai threefold.

By virtue of Lemma \ref{ext2_inheart}, we can assume that $E\in \cA$ (up to shift). If $E$ is not $\sigma$-stable, since $[E]$ is primitive, by Lemma \ref{des-triangle} we can find an exact sequence in $\cA$
\begin{equation}\label{des-seq}
    A\to E\to B
\end{equation}
such that $\Hom(A,B)=0$, $\phi_{\sigma}(A)\geq \phi^+_{\sigma}(B)$ and $A$ is $\sigma$-semistable with isomorphic Jordan--H\"older factors. From the exact triangle $B[-1]\to A\to E$ and Lemma \ref{spec-seq}, we have a spectral sequence which degenerates at $E_3$ converging to $\mathrm{Ext}^*(E,E)$ with $E_1$-page being
	\[E^{p,q}_1= \left\{
	\begin{aligned}
	\mathrm{Ext}^q(A,B[-1])=\mathrm{Ext}^{q-1}(A,B) & , ~ p=-1 \\
	\mathrm{Ext}^q(A,A)\oplus \mathrm{Ext}^q(B,B) & , ~ p=0 \\
	\mathrm{Ext}^q(B[-1], A)=\mathrm{Ext}^{q+1}(B,A) & , ~ p=1 \\
	0 &, ~  p\notin [-1,1]
	\end{aligned}
	\right.\]
 Since the homological dimension of $\cA$ is at most $2$, $A,B\in \cA$ and $\Hom(A,B)=0$, we have
 $$ E^{p,q}_1= \begin{array}{cc|cc}
	0 & 0 & 0 & 0\\
	0 & \Ext^2(A,B) & 0 & 0\\
	0 & \Ext^1(A,B) & \Ext^2(A,A)\oplus \Ext^2(B,B) & 0\\
	0 & 0 & \Ext^1(A,A)\oplus \Ext^1(B,B) & \Ext^2(B,A) \\
	0 & 0 & \Hom(A,A)\oplus \Hom(B,B) & \Ext^1(B,A) \\ \hline
	0 & 0 &  0 & \Hom(B,A)
	\end{array}$$
Note that the differential
\[d\colon E_1^{0,0}=\Hom(A,A)\oplus \Hom(B,B)\to E_1^{1,0}=\Ext^1(B,A)\]
maps $(\identity_A,0)$ and $(0,\identity_B)$ to the element $[E]\in \Ext^1(B,A)$ corresponds to the extension \eqref{des-seq}, we see $\dim \ker(d)\geq 1$. Thus from $\hom(E,E)=1$, we have $\dim E^{0,0}_2=\dim E^{0,0}_{\infty}=1$, hence $E_1^{1,-1}=\Hom(B,A)=0$. Moreover, since in each case we have $\Ext^2(E,E)=0$, we see  $$E^{-1,3}_1=E^{-1,3}_{\infty}=\Ext^2(A,B)=0,$$ which implies $\chi(A,B)=-\ext^1(A,B)\leq 0$. Then the first page becomes
 $$ E^{p,q}_1= \begin{array}{cc|cc}
	0 & 0 & 0 & 0\\
	0 & 0 & 0 & 0\\
	0 & \Ext^1(A,B) & \Ext^2(A,A)\oplus \Ext^2(B,B) & 0\\
	0 & 0 & \Ext^1(A,A)\oplus \Ext^1(B,B) & \Ext^2(B,A) \\
	0 & 0 & \Hom(A,A)\oplus \Hom(B,B) & \Ext^1(B,A) \\ \hline
	0 & 0 &  0 & 0
	\end{array}$$

\textbf{Case 1.} First, we assume that $\RHom(E,E)=\CC\oplus \CC^2[-1]$. When $X$ is a cubic threefold, we can assume that $[E]=v, v-w$ or $2v-w$ up to sign. In each case, a calculation using the Euler form and $\chi(A, B)\leq 0$ implies that $$[A]=[S^{-1}_{\Ku(X)}(E)],~\text{and}~[B]=[S_{\Ku(X)}(E)].$$ Thus $\chi(A,B)=-\ext^1(A,B)=0$. Therefore, we also have  $$E_1^{0,2}=E^{0,2}_{\infty}=\Ext^2(A,A)\oplus \Ext^2(B,B)=0.$$

Since $[A]$ is primitive, we see that $A$ has only one Jordan--H\"older factor with respect to $\sigma$ and is $\sigma$-stable. Hence $\hom(A,A)=1$. Now from $\chi(A,A)=-1$ and $\Ext^{\geq 2}(A,A)=0$, we see $\ext^1(A,A)=2$. By Lemma \ref{small-ext-stable}, $A$ is stable with respect to any Serre-invariant stability condition. Using $\Hom(A, E)=\Hom(E, S_{\Ku(X)}(A))\neq 0$ and the fact that $[E]=[S_{\Ku(X)}(A)]$ and $E, S_{\Ku(X)}(A)$ are stable with respect to any Serre-invariant stability condition, we get $A\cong S^{-1}_{\Ku(X)}(E)$. But this makes a contradiction, since $A, E \in \cA$ and
\[\Hom(A,E[-1])=\Hom(S^{-1}_{\Ku(X)}(E), E[-1])=\Ext^1(E,E)\neq 0.\]


When $X$ is a quartic double solid or a Gushel--Mukai threefold, we can assume that $[E]=v$ or $w$. Then a simple computation using $\chi(A,B)\leq 0$ shows that $[A]=0$ or $[B]=0$, which implies $A=0$ or $B=0$ and makes a contradiction.

\textbf{Case 2.} Now we assume that $\RHom(E,E)=\CC\oplus \CC^3[-1]$. In this case, $X$ is a quartic double solid or a Gushel--Mukai threefold and we can assume that $[E]=v-w$ or $v+w$. Then a simple computation using $\chi(A,B)\leq 0$ shows that $\{[A],[B]\}=\{v, w\}$ when $[E]=v+w$, and $\{[A],[B]\}=\{v,-w\}$ when $[E]=v-w$. Hence $\chi(A,B)=\ext^1(A,B)=0$. Therefore, we also have $$E_1^{0,2}=E^{0,2}_{\infty}=\Ext^2(A,A)\oplus \Ext^2(B,B)=0.$$ Since $[A]$ is primitive, we know that $A$ has only one Jordan--H\"older factor, hence is $\sigma$-stable. Thus $\hom(A,A)=1$, and $\ext^1(A,A)=2$ since $\chi(A,A)=-1$. By the previous case, $\tau(A):=S_{\Ku(X)}(A)[-2]$ is $\sigma$-stable as well. Moreover, by Lemma \ref{small-ext-stable} and Lemma \ref{bound-phase}, $\tau(A)$ is stable with respect to any Serre-invariant stability condition with the same phase as $A$. Thus $A,\tau(A)\in \cA$ by Lemma \ref{in-same-heart}. Hence $\phi_{\sigma}(A)=\phi_{\sigma}(\tau(A))$.

If $\Ext^2(B,A)=0$, then by Lemma \ref{mukai lemma} and (5) of Proposition \ref{S_prop} we get a contradiction. Thus we can assume that $\Ext^2(B,A)=\Hom(A, \tau(B))=\Hom(\tau(A), B)\neq 0$. By $\sigma$-stability of $\tau(A)$, we get $\phi_{\sigma}(\tau(A))\leq \phi^+_{\sigma}(B)$. Then from $\phi_{\sigma}(A)\geq \phi^+_{\sigma}(B)$ and $\phi_{\sigma}(A)=\phi_{\sigma}(\tau(A))$, we see $\phi_{\sigma}(A)=\phi_{\sigma}(\tau(A))=\phi^+_{\sigma}(B)$, hence from $\Hom(\tau(A), B)\neq 0$ and the $\sigma$-stability of $\tau(A)$, we have an injection $\tau(A)\hookrightarrow B$ in $\cA$. Since $[B]$ is primitive, by looking at the Jordan--H\"older filtration of the first Harder--Narasimhan factor of $B$ with respect to $\sigma$, we have an exact sequence $A'\to B\to B'$ in $\cA$ such that all Jordan--H\"older factors of $A'$ are isomorphic to $\tau(A)$ and $\Hom(A', B')=0$ as in Lemma \ref{des-triangle}. Applying the spectral sequence in Lemma \ref{spec-seq} as above, since $\Ext^{\geq 2}(B,B)=0$, we have $\Ext^2(A', B')=0$, which implies $\chi(A', B')\leq 0$. However, we know that $[A']=n[\tau(A)]=n[A]$ for an integer $n\geq 1$, hence
\[\chi(A', B')=\chi(n[A], [B]-n[A])=-n^2\chi([A], [A])=n^2>0\]
and we get a contradiction.
\end{proof}

As a corollary, we have:

\begin{corollary}\label{cor-image-Z}
Let $\sigma=(\cA, Z)$ be a stability condition on $\Ku(X)$ with homological dimension at most $2$. Then the image of the central charge $Z$ is not contained in a line, and we can find a Serre-invariant stability condition $\sigma'=(\cA', Z')$ on $\Ku(X)$ with $Z=Z'$. 
\end{corollary}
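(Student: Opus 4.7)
I will tackle the two assertions in sequence, since the second requires the first. The strategy is to leverage the abundance of $\sigma$-stable objects guaranteed by Theorem \ref{ext2_stable}: every object with $\ext^1 \leq 2$, or with $\ext^1 = 3$ and $\ext^2 = 0$, is automatically $\sigma$-stable. In particular, the same small-$\mathrm{Ext}$ objects that are stable under any Serre-invariant stability condition (e.g.\ ideal sheaves of lines, their Serre-functor images, or projections of points into $\Ku(X)$) remain $\sigma$-stable under our weaker hypothesis $\homdim(\sigma) \leq 2$.

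For the first claim, that the image of $Z$ is not contained in a line, my plan is proof by contradiction. I choose $\sigma$-stable objects $E_1, E_2$ whose classes form a basis of $\Knum \otimes \mathbb{Q}$ with $\chi([E_1], [E_2]) \neq 0$---e.g.\ $[E_1] = v$, $[E_2] = w$ for cubic threefolds (since $\chi(v,w) = -1$), and $[E_1] = v$, $[E_2] = v+w$ for quartic double solids or Gushel--Mukai threefolds (since $\chi(v, v+w) = -1$). By Lemma \ref{ext2_inheart} I may assume $E_1, E_2 \in \cA$. Were $Z(E_1)$ and $Z(E_2)$ to be $\mathbb{R}$-linearly dependent, then their both lying in the closed upper half-plane would force them onto a common ray, and up to shifting one of them by $[1]$ and re-choosing representatives I reach the configuration where $E_1, E_2 \in \cA$ have the same phase $\phi$. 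Then they are non-isomorphic $\sigma$-stable objects of equal phase, giving $\Hom(E_1, E_2) = \Hom(E_2, E_1) = 0$; combined with $\homdim(\sigma) \leq 2$, the Euler form forces $\ext^1(E_1, E_2) \geq 1$ (up to swapping with $\ext^2$). The resulting non-trivial extension $E_2 \to F \to E_1$ lies in the slice $\cP(\phi)$, and I expect the contradiction to come from applying Lemma \ref{sigma_mukai} to this triangle combined with the numerical data on $\Knum$; in the cubic threefold case, the Serre-functor image $S_{\Ku(X)}(E_1)$, whose class is $w$, provides a third $\sigma$-stable non-isomorphic object in the same slice, sharpening the over-population of $\cP(\phi)$.

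For the second claim, fix any Serre-invariant stability condition $\sigma_0 = (\cA_0, Z_0)$ (existing by Theorem \ref{stab_Ku}); its central charge is of full rank as well. Then $(Z(E_1), Z(E_2))$ and $(Z_0(E_1), Z_0(E_2))$ are both ordered $\mathbb{R}$-bases of $\mathbb{C}$, so there is a unique $G \in \mathrm{GL}(2, \mathbb{R})$ with $G^{-1} \circ Z_0 = Z$. I would verify $G \in \mathrm{GL}^+(2, \mathbb{R})$ by checking that the orientations of the two ordered bases agree, which amounts to matching the signs of $\phi_\sigma(E_2) - \phi_\sigma(E_1)$ and $\phi_{\sigma_0}(E_2) - \phi_{\sigma_0}(E_1)$; this can be aligned by simultaneously shifting both $E_i$ if needed. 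Lifting $G$ to $\tilde{g} \in \GL$ and setting $\sigma' := \sigma_0 \cdot \tilde{g}$ produces a stability condition with central charge $Z' = Z$; Serre-invariance of $\sigma'$ is inherited from that of $\sigma_0$, since $S_{\Ku(X)} \cdot \sigma_0 = \sigma_0 \cdot \tilde{g}_0$ implies $S_{\Ku(X)} \cdot \sigma' = \sigma' \cdot (\tilde{g}^{-1}\tilde{g}_0 \tilde{g})$.

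The main obstacle is the contradiction in the first step. The condition that the image of $Z$ lies on a line does not directly force any single forbidden numerical value; rather, it over-populates the abelian slice $\cP(\phi)$ with mutually non-isomorphic $\sigma$-stable objects coming from several small-$\mathrm{Ext}$ classes. Turning this over-population into a quantitative incompatibility requires the combined use of Lemma \ref{sigma_mukai}, the Euler form on $\Knum$, and (in the cubic threefold case) the Serre functor's action sending $v$ to $w$; the numerics will differ slightly across the three geometric settings because of the distinct Euler form matrices \eqref{euler_form_cubic}, \eqref{euler_form_quartic}, and \eqref{euler_form_GM}.
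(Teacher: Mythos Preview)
The paper's proof is not self-contained: after using Theorem~\ref{ext2_stable} to conclude that $\cI_l$, $S_{\Ku(X)}^{\pm 1}(\cI_l)$ (resp.\ the GM analogues) are $\sigma$-stable, it simply invokes \cite[Remark~4.8]{pertusi2020some} for cubic threefolds and quartic double solids and \cite[Lemma~4.7]{Pertusi2021serreinv} for GM threefolds. Those cited results argue differently from your contradiction approach: they establish a strict phase inequality of the shape $\phi_\sigma(D_1)-1<\phi_\sigma(D_2)<\phi_\sigma(D_1)$ directly, by exploiting explicit non-vanishing Hom spaces among $D_1$, $D_2$, and their Serre-functor images. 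The non-integer phase gap immediately forces $Z(v)$ and $Z(w)$ off a common line, and at the same time fixes the orientation needed to lift the linear map $Z_0\mapsto Z$ into $\mathrm{GL}^+(2,\mathbb{R})$ for the second claim.

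Your contradiction route for the first claim has a real gap, which you flag but do not close. The triangle $E_2\to F\to E_1$ feeds into Lemma~\ref{sigma_mukai} only if $\Ext^2(E_1,E_2)=0$, but with $E_1,E_2$ stable of equal phase and $\chi(E_1,E_2)=-1$ you only obtain $\ext^1(E_1,E_2)=\ext^2(E_1,E_2)+1$; nothing rules out the obstructing $\Ext^2$ being positive. Even if the lemma applied, its output $\ext^1(F,F)>2$ is not a contradiction by itself. For quartic double solids and GM threefolds there is the additional problem that you never exhibit an object of class $v+w$ with $\ext^1=3$ and $\ext^2=0$; the available small-$\Ext$ objects have classes $v$ and $w$, for which $\chi(v,w)=0$ and your Euler-form leverage disappears. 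Finally, in the second claim the orientation check is the crux and cannot be ``aligned by simultaneously shifting both $E_i$'': whether $G\in\mathrm{GL}^+(2,\mathbb{R})$ depends on whether the phase ordering of $D_1,D_2$ under $\sigma$ matches that under $\sigma_0$, and that is exactly what the phase-inequality method supplies and your sketch does not.
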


\begin{proof}
If $X$ is a cubic threefold or quartic double solid, let $l \subset X$ be a line. Then as $\ext^1(\cI_l,\cI_l) = 2$, by Theorem \ref{ext2_stable}, $\cI_l, S^{-1}_{\Ku(X)}(\cI_l)$ and $S_{\Ku}(\cI_l)$ are $\sigma$-stable.  Then the result follows from \cite[Remark 4.8]{pertusi2020some}.

When $X$ is a Gushel--Mukai threefold, the result follows from Theorem \ref{ext2_stable} and \cite[Lemma 4.7]{Pertusi2021serreinv}.
\end{proof}

We need two useful lemmas before proving our main theorem.

\begin{lemma}
Let $X$ be a cubic threefold, $\sigma=(\cA,Z)$ be a stability condition on $\Ku(X)$ with $\homdim(\sigma)\leq 2$ and $E\in \Ku(X)$ be an object with $\ext^1(E,E)=2$. Then we have
    \begin{equation}\label{eq-serre-action}
\phi_{\sigma}(E)+1\leq \phi_{\sigma}(S_{\Ku(X)}(E))<\phi_{\sigma}(E)+2.
\end{equation}
\end{lemma}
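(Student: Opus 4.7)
The plan has three ingredients: a Serre-duality $\Hom$ computation (for the lower bound), a heart analysis using $\homdim(\cA) \le 2$ (for a crude upper bound), and a comparison with a Serre-invariant stability condition having the same central charge as $\sigma$ (which sharpens the crude bound via a $\bmod\,2$ rigidity of phases). For the lower bound: since $\ext^1(E, E) = 2$, Theorem \ref{ext2_stable} yields that $E$ is $\sigma$-stable, and since $S_{\Ku(X)}$ is an autoequivalence we also have $\ext^1(S_{\Ku(X)}(E), S_{\Ku(X)}(E)) = 2$, so $S_{\Ku(X)}(E)$ is $\sigma$-stable as well. Serre duality then gives $\hom(E, S_{\Ku(X)}(E)[-1]) = \ext^1(E, E) = 2 \neq 0$, and the $\sigma$-stability of $E$ and $S_{\Ku(X)}(E)[-1]$ forces $\phi_\sigma(E) \le \phi_\sigma(S_{\Ku(X)}(E)) - 1$.

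For the crude upper bound, we may assume $E \in \cA$ after a shift (which does not affect the phase difference since $S_{\Ku(X)}$ commutes with shifts). By Lemma \ref{ext2_inheart} applied to $S_{\Ku(X)}(E)$, there is $m \in \ZZ$ with $S_{\Ku(X)}(E)[-m] \in \cA$. The Serre-duality values $\hom(E, S_{\Ku(X)}(E)) = \hom(E, E) = 1$ and $\hom(E, S_{\Ku(X)}(E)[-1]) = 2$ are non-zero, and rewriting these as $\Ext$'s in $\cA$ between $E$ and $S_{\Ku(X)}(E)[-m]$, the hypothesis $\homdim(\cA) \le 2$ forces $m \in \{0, 1, 2\}$ and $m \in \{1, 2, 3\}$ respectively, so $m \in \{1, 2\}$. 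Thus $\phi_\sigma(S_{\Ku(X)}(E)) \in (m, m+1] \subset (1, 3]$, so $\phi_\sigma(S_{\Ku(X)}(E)) - \phi_\sigma(E) < 3$.

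For the sharpening, Corollary \ref{cor-image-Z} provides a Serre-invariant $\sigma_0 = (\cA_0, Z)$ with the same central charge $Z$ as $\sigma$. By Lemma \ref{small-ext-stable}, both $E$ and $S_{\Ku(X)}(E)$ are $\sigma_0$-stable, so Lemma \ref{bound-phase}(1) gives $\phi_{\sigma_0}(S_{\Ku(X)}(E)) - \phi_{\sigma_0}(E) \in (1, 2)$. For any object $F$ stable in both $\sigma$ and $\sigma_0$, the quadratic-form axiom forces $Z(F) \ne 0$, so $e^{i\pi \phi_\sigma(F)} = Z(F)/|Z(F)| = e^{i\pi \phi_{\sigma_0}(F)}$ and hence $\phi_\sigma(F) - \phi_{\sigma_0}(F) \in 2\ZZ$. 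Applying this to $E$ and $S_{\Ku(X)}(E)$,
\[
\phi_\sigma(S_{\Ku(X)}(E)) - \phi_\sigma(E) \in (1, 2) + 2\ZZ,
\]
and intersecting with the window $[1, 3)$ from the earlier steps leaves only $(1, 2)$, yielding $\phi_\sigma(S_{\Ku(X)}(E)) < \phi_\sigma(E) + 2$. The hard point is this $\bmod\,2$ rigidity step, which converts the heart-based length-$3$ window into the sharp length-$1$ answer; it is also where the cubic-threefold hypothesis enters substantively, via the relation $S^3_{\Ku(X)} = [5]$ underlying Lemma \ref{bound-phase}(1).
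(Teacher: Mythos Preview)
Your proof is correct and follows essentially the same approach as the paper: both establish the lower bound via Serre duality and $\sigma$-stability of $E$ and $S_{\Ku(X)}(E)$ (from Theorem \ref{ext2_stable}), both use the two non-vanishing $\Hom$'s together with $\homdim(\cA)\le 2$ to pin the heart shift of $S_{\Ku(X)}(E)$ to two adjacent values, and both invoke Corollary \ref{cor-image-Z} plus Lemma \ref{bound-phase}(1) and a $\bmod\,2$ comparison of phases under the common central charge to finish. The only cosmetic difference is that the paper phrases the final step as matching the heart shifts $n$ and $m$ for $\cA$ and the Serre-invariant heart $\cA'$, whereas you work directly with the phase difference and intersect $(1,2)+2\ZZ$ with your window $[1,3)$; these are equivalent bookkeeping for the same argument.
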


\begin{proof}
Up to shift, we can assume that $E\in \cA$. By Theorem \ref{ext2_stable}, $E, S_{\Ku(X)}(E)$ and $S^{-1}_{\Ku(X)}(E)$ are all $\sigma$-stable. Since $\Hom(E,E[1])=\Hom(E[1], S_{\Ku(X)}(E))\neq 0$, we have $$\phi_{\sigma}(E)+1\leq \phi_{\sigma}(S_{\Ku(X)}(E)).$$ And from Corollary \ref{cor-image-Z}, we can take a Serre-invariant stability condition $\sigma'=(\cA',Z')$ with $Z=Z'$ and $\phi_{\sigma}(E)=\phi_{\sigma'}(E)$. Hence $E\in \cA\cap \cA'$. Assume that $S_{\Ku(X)}(E)[m]\in \cA'$. To prove the statement, by Lemma \ref{bound-phase} and $Z=Z'$, we only need to show $S_{\Ku(X)}(E)[m]\in \cA$. To this end, assume that $S_{\Ku(X)}(E)[n]\in \cA$, then since $Z=Z'$, we see $n-m\in 2\ZZ$. From $\Hom(E, S_{\Ku(X)}(E))=\Hom(E, S_{\Ku(X)}(E)[n][-n])\neq 0$, we see $0\leq -n\leq 2$. And from $\Hom(E[1], S_{\Ku(X)}(E))=\Hom(E, S_{\Ku(X)}(E)[n][-1-n])\neq 0$, we see $0\leq -n-1\leq 2$. Therefore, we obtain $n=-1$ or $-2$. Since $n-m\in 2\ZZ$, we have $n=m$ and the result follows.
\end{proof}

\begin{lemma}\label{lem-D1D2}
Let $X$ be a cubic threefold, quartic double solid or Gushel--Mukai threefold. Then there exist two objects $D_1, D_2\in \Ku(X)$ with  $[D_1]=v$ and $[D_2]=w$, such that for any stability condition $\sigma$ on $\Ku(X)$ with $\homdim(\sigma)\leq 2$, $D_1$ and $D_2$ are $\sigma$-stable with phases
\begin{equation}\label{eq-lemD1D2}
    \phi_{\sigma}(D_1)-1<\phi_{\sigma}(D_2)<\phi_{\sigma}(D_1).
\end{equation}
\end{lemma}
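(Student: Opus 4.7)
The plan is to exhibit explicit $D_1, D_2$ representing $v, w$ whose $\sigma$-stability follows from Theorem \ref{ext2_stable}, and then to compare phases of $D_i$ under $\sigma$ against phases under a Serre-invariant stability condition produced by Corollary \ref{cor-image-Z}, using the window constraints coming from $\homdim(\sigma) \leq 2$ to pin down the remaining integer ambiguity.

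For the cubic threefold and quartic double solid I would take $D_1 = \cI_l$ for a sufficiently general line $l \subset X$ and $D_2$ an appropriate shift of $\mathbf{O}(\cI_l)[1]$ (for instance $D_2 = \mathbf{O}(\cI_l)[-1]$ in the cubic case, so that Lemma \ref{bound-phase}(1) yields $\phi_{\sigma'}(D_2) \in (\phi_{\sigma'}(D_1)-1, \phi_{\sigma'}(D_1))$ for a Serre-invariant $\sigma'$); for the Gushel--Mukai case I would take $D_1 = \cI_C$ and $D_2 = F$ from Section \ref{section-Ku}, suitably shifted. In each case $\ext^1(D_i, D_i) \leq 2$ is standard, so Theorem \ref{ext2_stable} gives $\sigma$-stability of $D_i$ for every $\sigma$ with $\homdim(\sigma) \leq 2$, and Corollary \ref{cor-image-Z} supplies a Serre-invariant $\sigma' = (\cA', Z')$ with $Z' = Z$. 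Known descriptions of Serre-invariant hearts together with Lemma \ref{bound-phase} yield \eqref{eq-lemD1D2} for $\sigma'$. Since $D_1, D_2$ are stable under both $\sigma$ and $\sigma'$ and $Z = Z'$, we have $\phi_\sigma(D_i) - \phi_{\sigma'}(D_i) \in 2\ZZ$, so it suffices to verify that the even integer shift is the same for $i = 1, 2$.

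To carry out the final step I would fix $k_i \in \ZZ$ with $D_i[k_i] \in \cA$ and use the vanishings
\[ \Hom(D_1, D_2[k_2 - k_1 + n]) = 0 \quad \text{for } n \notin [0, 2], \]
implied by $\homdim(\cA) \leq 2$, together with the analogous vanishings for $\RHom(D_2, D_1)$. An explicit computation of $\RHom(D_1, D_2)$ and $\RHom(D_2, D_1)$ --- using Serre duality inside $\Ku(X)$, the identifications of $S_{\Ku(X)}$ recalled in Section \ref{section-Ku}, and the rigidity $\ext^1(D_i, D_i) \leq 2$ --- shows their nonvanishing degrees lie in a window of width two. The window constraint then forces $k_2 - k_1$ to take the unique value realized in $\cA'$, giving $\phi_\sigma(D_2) - \phi_\sigma(D_1) = \phi_{\sigma'}(D_2) - \phi_{\sigma'}(D_1)$, which is \eqref{eq-lemD1D2} for $\sigma$.

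The main obstacle is this final step: verifying, case by case, that the $\Ext$-computations between $D_1$ and $D_2$ produce a window narrow enough to rule out all but one value of $k_2 - k_1$. In the cubic case the relation $S_{\Ku(X)}(D_1) = \mathbf{O}(D_1)[1]$ reduces the task essentially to a computation of $\Ext^\bullet(\cI_l, \cI_l)$; in the quartic double solid and Gushel--Mukai cases $S^2_{\Ku(X)} \cong [4]$ combined with Serre duality plays the analogous role. The bookkeeping should be routine but must be carried out separately for each of the three geometries.
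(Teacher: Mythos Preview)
Your choice of objects and the use of Theorem \ref{ext2_stable} and Corollary \ref{cor-image-Z} match the paper exactly. The difference is in how the phase inequality \eqref{eq-lemD1D2} is pinned down. For cubic threefolds and quartic double solids the paper does not go through a Serre-invariant $\sigma'$ at all: once the $D_i$ (and their $\mathbf{O}$-translates) are known to be $\sigma$-stable, the inequality is read off directly from \cite[Remark 4.8]{pertusi2020some}, which encodes the phase constraints forced by the $\mathbf{O}$-periodicity and the nonvanishing Homs among the rotated ideal sheaves. For the Gushel--Mukai case the paper does use $\sigma'$ as you propose, but it avoids computing $\RHom(D_1,D_2)$ entirely: the trick is to apply Lemma \ref{in-same-heart} to the pair $D_1$ and $S^{-1}_{\Ku(X)}(D_1)[2]$, which have the \emph{same} numerical class (since $S_{\Ku(X)}$ acts trivially on $\Knum$), to deduce $\phi_\sigma(S^{-1}_{\Ku(X)}(D_1))=\phi_\sigma(D_1)-2$; then the nonzero map $D_2\to D_1$ and its Serre-dual sandwich $\phi_\sigma(D_2)$ inside a window of width $2$, and the parity constraint against $\sigma'$ closes it.

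Your alternative route---compute $\RHom(D_1,D_2)$ and $\RHom(D_2,D_1)$ directly and use the $\homdim\le 2$ window---is reasonable in the cubic case, where $D_2=S_{\Ku(X)}(D_1)[-2]$ makes $\RHom(D_1,D_2)$ literally the Serre-dual of $\RHom(D_1,D_1)$. But there is a genuine obstacle in the quartic and Gushel--Mukai cases: your claimed reduction via ``$S^2_{\Ku(X)}\cong[4]$ combined with Serre duality'' only relates $\RHom(D_1,D_2)$ to $\RHom(D_2,\tau(D_1))$, not to $\RHom(D_2,D_1)$, and for generic choices $\tau(D_1)\ncong D_1$. Moreover $\chi(D_1,D_2)=0$ in these cases, so $\RHom(D_1,D_2)$ could a priori be zero, and your window argument would give no constraint. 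You would need independent geometric input to compute these $\RHom$'s, which is more than the ``routine bookkeeping'' you anticipate. The paper's use of Lemma \ref{in-same-heart} on objects of equal class is precisely what sidesteps this.
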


\begin{proof}
    When $X$ is a cubic threefold or a quartic double solid, we set $D_1=\cI_l$ and $D_2=\mathbf{O}(\cI_l)[-1]$, where $l\subset Y$ is a line in $Y$ and $\mathbf{O}$ is the rotation functor defined in Section \ref{section-Ku}.  Therefore, by \cite[Lemma 5.16]{pertusi2020some},  we see
\[\RHom(D_i,D_i)=\CC\oplus \CC^2[-1],~\forall i\in \{1,2\}.\]
Hence $[D_1]=v$ and $[D_2]=w$, and are both $\sigma$-stable from Theorem \ref{ext2_stable}. Thus \eqref{eq-lemD1D2} follows from \cite[Remark 4.8]{pertusi2020some}, in which $\cJ_l:=\mathbf{O}^{-1}(\cI_l)[1]$. 

Now we assume that $X$ is a Gushel--Mukai threefold. We set $D_1=\cI_C$ and $D_2=F$, where $C\subset X$ is a general smooth conic and $F$ is a general rank $2$ non-locally free slope-stable sheaf on $X$ with $c_1(F)=-1, c_2(F)=5$ and $c_3(F)=0$. By the smoothness of $C$, we get $\RHom(D_1,D_1)=\CC\oplus \CC^2[-1]$. Since $C$ is general in the Fano surface of conics, by \cite[Proposition 7.1]{JLLZ} we have $D_1=\cI_C\in \Ku(X)$. Hence from Theorem \ref{ext2_stable}, $D_1$ is $\sigma$-stable. And by virtue of \cite[Theorem 8.1]{JLLZ}, $F$ fits into an exact sequence $0\to F\to \cE_X\to \oh_l(-1)\to 0$, where $l\subset X$ is a line. By \cite[Proposition 8.2]{JLLZ}, we see $D_2=F\in \Ku(X)$. Moreover, we can take $l$ to be general in the Hilbert scheme of lines on $X$ so that $\RHom(D_2,D_2)=\CC\oplus \CC^2[-1]$ (cf.~\cite[Section 8]{JLLZ}). Hence $D_2$ is $\sigma$-stable as well.

It remains to verify \eqref{eq-lemD1D2}. By \cite[Lemma 6.2]{JLLZ}, $\Hom(\cE_X, D_1)\neq 0$. Hence from the exact sequence above defining $F$, we see $\Hom(D_2,D_1)=\Hom(S^{-1}_{\Ku(X)}(D_1), D_2)\neq 0$. Thus we get $\phi_{\sigma}(S^{-1}_{\Ku(X)}(D_1))<\phi_{\sigma}(D_2)<\phi_{\sigma}(D_1)$. Note that $S^{-1}_{\Ku(X)}(D_1)$ is $\sigma$-stable as well, and by Lemma \ref{small-ext-stable} and Lemma \ref{bound-phase} $D_1$ and $S^{-1}_{\Ku(X)}(D_1)[2]$ are stable with respect to any Serre-invariant stability conditions with the same phase. Thus from $[D_1]=[S^{-1}_{\Ku(X)}(D_1)[2]]$ and Lemma \ref{in-same-heart}, we see $\phi_{\sigma}(D_1)=\phi_{\sigma}(S^{-1}_{\Ku(X)}(D_1)[2])$. Then we obtain $\phi_{\sigma}(D_1)-2<\phi_{\sigma}(D_2)<\phi_{\sigma}(D_1)$.

Now by Corollary \ref{cor-image-Z}, we can find a Serre-invariant stability condition $\sigma'=(\cA', Z')$ such that $Z=Z'$ and $\phi_{\sigma}(D_1)=\phi_{\sigma'}(D_1)$. Then we get \[\phi_{\sigma'}(D_1)-2<\phi_{\sigma}(D_2)<\phi_{\sigma'}(D_1).\] However, from $Z=Z'$, we see $\phi_{\sigma}(D_2)-\phi_{\sigma'}(D_2)\in 2\ZZ$, which forces $\phi_{\sigma}(D_2)=\phi_{\sigma'}(D_2)$. Then it remains to check $\phi_{\sigma'}(D_1)-1<\phi_{\sigma'}(D_2)<\phi_{\sigma'}(D_1)$. By Theorem \ref{unqiue}, up to $\GL$-action, we can take $\sigma'$ to be the one constructed in \cite{bayer2017stability}, then the result follows from a direct computation of tilt-stability.
\end{proof}

Now we are ready to prove our main theorems. We first prove that a stability condition $\sigma$ on $\Ku(X)$ with $\homdim(\sigma)\leq 2$ is determined by its central charge.

\begin{theorem}\label{cri_thm_1}
Let $X$ be a cubic threefold, quartic double solid or Gushel--Mukai threefold, and $\sigma_1=(\cA_1,Z_1),\sigma_2=(\cA_2,Z_2)$ be a pair of stability conditions on $\Ku(X)$. If $Z_1=Z_2$ and $\homdim(\sigma_i)\leq 2$ for any $i\in \{1,2\}$, then 
\[\sigma_1=[2m]\cdot\sigma_2\]
for an integer $m\in \ZZ$.
\end{theorem}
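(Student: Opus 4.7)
The plan is to exploit the distinguished pair $(D_1, D_2)$ from Lemma \ref{lem-D1D2} to first pin down the integer shift $m$, and then argue that the hearts of $\sigma_1$ and $\sigma_2$ coincide after the shift is applied.

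First, applying Lemma \ref{lem-D1D2} to each of $\sigma_1$ and $\sigma_2$ (both of which have homological dimension at most $2$), one obtains $D_1, D_2 \in \Ku(X)$ which are simultaneously $\sigma_1$- and $\sigma_2$-stable, with phase bounds $\phi_{\sigma_i}(D_1) - 1 < \phi_{\sigma_i}(D_2) < \phi_{\sigma_i}(D_1)$ for $i = 1, 2$. Because $Z_1 = Z_2$, the phase of each stable $D_j$ is determined modulo $2$, so there exist integers $m_1, m_2$ with $\phi_{\sigma_1}(D_j) = \phi_{\sigma_2}(D_j) + 2m_j$ for $j = 1, 2$. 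Substituting these into the phase bound for $i = 1$ and comparing with the bound for $i = 2$ shows that the open intervals $(\phi_{\sigma_2}(D_1) - 1, \phi_{\sigma_2}(D_1))$ and $(\phi_{\sigma_2}(D_1) - 1 + 2(m_1 - m_2), \phi_{\sigma_2}(D_1) + 2(m_1 - m_2))$ both contain $\phi_{\sigma_2}(D_2)$, which forces $|2(m_1 - m_2)| < 1$ and hence $m_1 = m_2 =: m$. After replacing $\sigma_2$ by $[2m] \cdot \sigma_2$, one may assume $\phi_{\sigma_1}(D_j) = \phi_{\sigma_2}(D_j)$ for $j = 1, 2$, and it suffices to show $\sigma_1 = \sigma_2$ under this normalization.

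To conclude $\sigma_1 = \sigma_2$, I would argue that every $\sigma_1$-stable object with primitive numerical class is $\sigma_2$-stable with the same phase, handling non-primitive and strictly $\sigma_1$-semistable cases by a Jordan--H\"older argument in $\cA_1$. Suppose for contradiction $E$ is $\sigma_1$-stable with $[E]$ primitive but not $\sigma_2$-stable. Lemma \ref{des-triangle} provides a triangle $A \to E \to B$ with $A$ $\sigma_2$-semistable, $\Hom(A, B) = 0$, $\phi_{\sigma_2}(A) \geq \phi^+_{\sigma_2}(B)$, and all Jordan--H\"older factors of $A$ isomorphic. Running the spectral sequence of Lemma \ref{spec-seq} on $B[-1] \to A \to E$ together with $\homdim(\sigma_2) \leq 2$ (exactly as in the proof of Theorem \ref{ext2_stable}) constrains $\Ext^*(A, B)$, $\Ext^*(A, A)$, and $\Ext^*(B, B)$. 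The explicit Euler pairing on the rank-two lattice $\Knum$ then restricts $[A]$ and $[B]$ to a short list of possibilities, and for each the equality $Z_1 = Z_2$ combined with the normalization on $D_1, D_2$ determines $\phi_{\sigma_1}(A)$ and $\phi_{\sigma_1}(B)$, which should yield a contradiction with the $\sigma_1$-stability of $E$.

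The principal obstacle, absent in Theorem \ref{ext2_stable}, is the lack of any a priori bound on $\ext^1(E, E)$. I plan to overcome this by induction on $\ext^1(E, E)$ via the Weak Mukai Lemma \ref{sigma_mukai}: once $\Hom(B, A[2]) = 0$ is established, one has $\ext^1(A, A), \ext^1(B, B) < \ext^1(E, E)$, so the inductive hypothesis applies to $A$ and $B$, with Theorem \ref{ext2_stable} supplying the base case $\ext^1 \leq 2$. The delicate step is verifying $\Hom(B, A[2]) = 0$; for this, the Serre-invariant anchor produced by Corollary \ref{cor-image-Z}---a Serre-invariant $\sigma_0$ with the same central charge $Z$---gives uniform control on phase differences between $A$ and $B$ via Lemma \ref{bound-phase}, which together with the normalization at $D_1, D_2$ should force the relevant $\Ext^2$ to vanish.
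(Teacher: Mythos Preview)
Your normalization step via $D_1, D_2$ is correct and parallels the paper's setup. The problems are in the inductive half.

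First, the inductive statement you propose---that every $\sigma_1$-stable object with primitive class is $\sigma_2$-stable with the same phase---is too narrow to recurse on. The objects $A$ and $B$ coming from a $\sigma_2$-destabilization of $E$ via Lemma \ref{des-triangle} have no reason to be $\sigma_1$-stable ($B$ is not even $\sigma_2$-semistable in general), so the inductive hypothesis does not apply to them. Second, the vanishing $\Hom(B,A[2])=0$ that you need for Lemma \ref{sigma_mukai} cannot be extracted from the Serre-invariant anchor $\sigma_0$ of Corollary \ref{cor-image-Z} without circularity: the triangle $A\to E\to B$ and the bound $\phi_{\sigma_2}(A)\geq\phi^+_{\sigma_2}(B)$ are data internal to $\sigma_2$, and you have no control over the $\sigma_0$-phases of $A,B$ until you already know $\sigma_2=\sigma_0$ up to shift. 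Lemma \ref{bound-phase} applies only to Serre-invariant stability conditions, which $\sigma_2$ is not yet known to be. Without this vanishing, the spectral sequence does not yield $\chi(A,B)\leq 0$ either, so the claimed ``short list of possibilities'' for $[A],[B]$ does not materialize.

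The paper sidesteps both issues by a different mechanism. Instead of destabilizing $E$, it takes the cohomology objects $\cH^i_{\cA_2}(E)$ of $E\in\cA_1$ and invokes Lemma \ref{ext1-heart} (not the Weak Mukai Lemma) to get $\sum_i\ext^1(\cH^i_{\cA_2}(E),\cH^i_{\cA_2}(E))\leq\ext^1(E,E)$; this inequality needs no $\Ext^2$-vanishing between the pieces and works for arbitrary $E$, primitive or not. The inductive statement is the symmetric ``$E\in\cA_1$ iff $E\in\cA_2$'', which then genuinely applies to each cohomology object. To close the argument one still needs to rule out an even shift, and for this the paper proves a separate preliminary claim: if $E\in\cA_1$ and $E[n]\in\cA_2$ then $n=0$. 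That claim is established directly by pairing $[E]$ against $[D_1]$ or $[D_2]$ in the Euler form to produce a nonzero $\Ext^1$ between $E$ and one of $D_1,D_2,D_2[1]$, and then reading off phase bounds in $\sigma_2$; no appeal to a third stability condition $\sigma_0$ is needed.
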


\begin{proof}
We take two objects $D_1$ and $D_2$ as in Lemma \ref{lem-D1D2}. Note that $[D_1]=v$ and $[D_2]=w$ and are both $\sigma_1$-stable and $\sigma_2$-stable. Moreover, we have
\begin{equation}\label{Di-phase}
    \phi_{\sigma_k}(D_1)-1<\phi_{\sigma_k}(D_2)<\phi_{\sigma_k}(D_1)
\end{equation}
for any $k\in \{1,2\}.$ Up to shift, we can furthermore assume that $D_1\in \cA_1\cap \cA_2$. Thus from \eqref{Di-phase}, either $D_2\in \cA_1\cap \cA_2$ or $D_2[1]\in \cA_1\cap \cA_2$. Moreover, since $Z_1=Z_2$, we get $\phi_{\sigma_1}(D_i)=\phi_{\sigma_2}(D_i)$ for $i\in \{1,2\}$.

\textbf{Claim 1.} \emph{If a non-zero object $E\in \cA_i$ satisfies $E[n]\in \cA_j$, then $n=0$, where $\{i, j\}=\{1,2\}$.} Note that from $Z_1=Z_2$, we have $n\in 2\ZZ$.

Since the claim and assumptions are symmetry between $\sigma_1$ and $\sigma_2$, in the following we assume that $i=1$ and $j=2$. Then we have 
\begin{equation}\label{eq-n}
    n<\phi^-_{\sigma_2}(E), \quad \phi^+_{\sigma_2}(E)\leq n+1.
\end{equation}

Let $[E]=av+bw$ for $a,b\in \ZZ$.  First, we assume that $X$ is a cubic threefold. Assume $F\in \cA_1\cap \cA_2$ be an object with $\ext^1(F,F)=2$. Then $F, S_{\Ku(X)}(F)$ and $S^{-1}_{\Ku(X)}(F)$ are $\sigma_1$-stable and $\sigma_2$-stable. Note that if $\chi(F,E)<0$ or $\chi(E,F)<0$, then from $\homdim(\sigma_1)\leq 2$, we get $\Hom(F,E[1])=\Hom(E, S_{\Ku(X)}(F)[-1])\neq 0$ or $\Hom(E,F[1])=\Hom(S^{-1}_{\Ku(X)}(F)[1],E)\neq 0$, which implies 
\[\phi_{\sigma_2}(F)-1\leq \phi^+_{\sigma_2}(E),\quad \phi_{\sigma_2}^-(E)\leq\phi_{\sigma_2}(S_{\Ku(X)}(F))-1\]
or
\[\phi_{\sigma_2}(S^{-1}_{\Ku(X)}(F))+1\leq \phi^+_{\sigma_2}(E),\quad \phi_{\sigma_2}^-(E)\leq \phi_{\sigma_2}(F)+1.\]
But by \eqref{eq-n} and \eqref{eq-serre-action}, we always have $-1<n+1$ and $n<2$, which implies $n=0$ since $n\in 2\ZZ$. Therefore, in the following, we only need to find an object $F\in \cA_1\cap \cA_2$ with $\ext^1(F, F)=2$ and $\chi(F, E)<0$ or $\chi(E, F)<0$.

\begin{itemize}
    \item If $b=0$, then $[E]=a[D_1]$ and $a>0$. In this case, we have $$\chi(D_1, E)=\chi(E, D_1)=-a<0.$$ 

    \item Assume  $D_2\in \cA_1\cap \cA_2$. If $a=0$, then $b>0$. In this case, we have $$\chi(E, D_2)=-a-b<0.$$ 

     \item Assume $D_2\in \cA_1\cap \cA_2$. If $a\neq 0$, $b\neq 0$, then from
     \[\Im(Z_1(E))=a\cdot\Im(Z_1(D_1))+b\cdot\Im(Z_1(D_2))\geq 0,\]
     either $a<0<b$ or $0<a$. Then we have either $\chi(D_2, E)=-b<0$ or $\chi(E, D_1)=-a<0$. 

   \item Assume $D_2[1]\in \cA_1\cap \cA_2$. If $a=0$, then $b<0$. In this case, we have $$\chi(E, D_2[1])=a+b<0.$$ 

     \item Assume $D_2[1]\in \cA_1\cap \cA_2$. If $a\neq 0$, $b\neq 0$, then from
     \[\Im(Z_1(E))=a\cdot\Im(Z_1(D_1))-b\cdot\Im(Z_1(D_2[1]))\geq 0,\]
     either $0<a$ or $b<0, a<0$. Then we have either $\chi(E, D_1)=-a<0$ or $\chi(E, D_2[1])=a+b<0$. 
\end{itemize}

Now we assume that $X$ is a quartic double solid or a Gushel--Mukai threefold.

\begin{itemize}
    \item If $b=0$, then $a>0$. In this case, we have $$\chi(D_1, E)=\chi(E, D_1)=-a<0,$$ hence from $D_1, E\in \cA_1$ we get $\Hom(D_1, E[1])\neq 0$ and $\Hom(E, D_1[1])\neq 0$. Then we have \[\phi_{\sigma_2}(D_1)-1\leq \phi^+_{\sigma_2}(E),\quad \phi_{\sigma_2}^-(E)\leq \phi_{\sigma_2}(D_1)+1.\]
    But by \eqref{eq-n}, we get $-1<n+1$ and $n<2$, which implies $n=0$ since $n\in 2\ZZ$.

    \item Assume $D_2\in \cA_1\cap \cA_2$. If $b\neq 0$, then $b>0$. In this case, we have $$\chi(D_2, E)=\chi(E, D_2)=-b<0.$$ Then by the same argument above, we get $\Hom(D_2, E[1])\neq 0$ and $\Hom(E, D_2[1])\neq 0$. Then we have \[\phi_{\sigma_2}(D_2)-1\leq \phi^+_{\sigma_2}(E),\quad \phi_{\sigma_2}^-(E)\leq \phi_{\sigma_2}(D_2)+1.\]
    But by \eqref{eq-n}, we get $-1<n+1$ and $n<2$, which implies $n=0$ since $n\in 2\ZZ$.

    \item Assume $D_2[1]\in \cA_1\cap \cA_2$. If $b\neq 0$, then $b<0$. In this case, we have $$\chi(D_2[1], E)=\chi(E, D_2[1])=b<0.$$ Then by the same argument above, we get $\Hom(D_2[1], E[1])\neq 0$ and $\Hom(E, D_2[2])\neq 0$. Then we have \[\phi_{\sigma_2}(D_2[1])-1\leq \phi^+_{\sigma_2}(E),\quad \phi_{\sigma_2}^-(E)\leq \phi_{\sigma_2}(D_2[1])+1.\]
    But by \eqref{eq-n}, we get $-1<n+1$ and $n<2$, which implies $n=0$ since $n\in 2\ZZ$.
\end{itemize}

\textbf{Claim 2.} \emph{A non-zero object $E\in \Ku(X)$ is in $\cA_1$ if and only if $E$ is in $\cA_2$.}

We prove this claim by induction on $\ext^1(E,E)$. When $\ext^1(E,E)\leq 2$, we have $\ext^1(E,E)=2$ by Proposition \ref{S_prop} (5), hence $E$ is $\sigma_i$-stable by Theorem \ref{ext2_stable}. then the result follows from our assumption $\phi_{\sigma_1}(E)=\phi_{\sigma_2}(E)$. Now we assume that the claim holds for any object $E$ with $\ext^1(E, E)<N$ for an integer $N>2$. We are going to prove the claim for $\ext^1(E, E)=N$. To this end, we first assume that $E\in \cA_1$ with $\ext^1(E, E)=N$. If $E$ has at least two cohomology objects with respect to $\cA_2$, we denote $a,b\in \ZZ$ by unique integers satisfy
\[\cH^{-a}_{\cA_2}(E)\neq 0, \cH^{k}_{\cA_2}(E)=0, \forall k<-a\]
and
\[\cH^{-b}_{\cA_2}(E)\neq 0, \cH^{k}_{\cA_2}(E)=0, \forall k>-b.\]
Therefore, we have two non-zero maps $\cH^{-a}_{\cA_2}(E)[a]\to E$ and $E\to \cH^{-b}_{\cA_2}(E)[b]$. Using (5) of Proposition \ref{S_prop} and Lemma \ref{ext1-heart}, we see $$\ext^1(\cH^{-a}_{\cA_2}(E),\cH^{-a}_{\cA_2}(E))<N,~\text{and}~\ext^1(\cH^{-b}_{\cA_2}(E),\cH^{-b}_{\cA_2}(E))<N.$$ Thus by the induction hypothesis, we get $\cH^{-a}_{\cA_2}(E),\cH^{-b}_{\cA_2}(E)\in \cA_1$. But since $E\in \cA_1$, we have $a\leq 0$ and $b\geq 0$, which contradicts $a>b$. This implies $E$ is in $\cA_2$ up to a shift, but Claim 1 shows $E\in \cA_2$. When $E\in \cA_2$ with $\ext^1(E, E)=N$, the same argument as above also shows that $E\in \cA_1$. This completes our induction argument and proves Claim 2.

Therefore, by the Claim 2 above, we have $\cA_1=\cA_2$, which together with $Z_1=Z_2$ implies $\sigma_1=\sigma_2$.
\end{proof}

Since we know that the homological dimension of any Serre invariant stability condition on $\Ku(X)$ is at most $2$, we get the following criterion.

\begin{theorem}\label{cri_thm}
Let $X$ be a cubic threefold, quartic double solid or Gushel--Mukai threefold, and $\sigma$ be a stability condition on $\Ku(X)$. Then the following conditions are equivalent:

\begin{enumerate}
    \item $\homdim(\sigma)\leq 2$,

    \item $\gldim(\sigma)\leq 2$, and

    \item $\sigma$ is Serre-invariant.
\end{enumerate}

\end{theorem}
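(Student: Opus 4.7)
The plan is to establish the chain (3) $\Rightarrow$ (2) $\Rightarrow$ (1) $\Rightarrow$ (3). The first two implications are essentially immediate from results already in the paper and constitute the easy directions.

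For (3) $\Rightarrow$ (2), I would invoke Proposition \ref{S_prop}(2), which states that for a Serre-invariant stability condition $\sigma$ we have $\Hom(E,F[2]) = 0$ whenever $\phi^+_\sigma(E) < \phi^-_\sigma(F)$. By the equivalence of conditions in Lemma \ref{equiv-cond}, this is precisely the statement $\gldim(\sigma) \leq 2$. For (2) $\Rightarrow$ (1), I would simply cite Lemma \ref{global_homo}, which bounds the homological dimension of the heart by the global dimension of the stability condition.

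The entire content of the theorem sits in the implication (1) $\Rightarrow$ (3), and here the plan is to reduce it to the rigidity statement already established in Theorem \ref{cri_thm_1}. Suppose $\homdim(\sigma) \leq 2$. By Corollary \ref{cor-image-Z}, we can produce a Serre-invariant stability condition $\sigma'$ whose central charge coincides with that of $\sigma$. Since every Serre-invariant stability condition on $\Ku(X)$ has homological dimension at most $2$ by Proposition \ref{S_prop}(1), both $\sigma$ and $\sigma'$ satisfy the hypotheses of Theorem \ref{cri_thm_1}, which forces $\sigma = [2m] \cdot \sigma'$ for some integer $m$. Because the shift $[2m]$ commutes with the Serre functor $S_{\Ku(X)}$, Serre-invariance is preserved under this left action: from $S_{\Ku(X)} \cdot \sigma' = \sigma' \cdot \tilde{g}$ for some $\tilde{g} \in \GL$ we obtain $S_{\Ku(X)} \cdot \sigma = [2m] \cdot S_{\Ku(X)} \cdot \sigma' = [2m] \cdot \sigma' \cdot \tilde{g} = \sigma \cdot \tilde{g}$.

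The main obstacle is not in this theorem itself but was already confronted in Theorem \ref{cri_thm_1}, whose proof required the construction of sufficiently many stable objects of small $\ext^1$ in every heart of homological dimension $\leq 2$ (Theorem \ref{ext2_stable} and Lemma \ref{lem-D1D2}) together with the existence-of-matching-Serre-invariant-condition trick of Corollary \ref{cor-image-Z}. Once that machinery is in place, the present theorem is a short three-line deduction, and the surprise flagged in the introduction, that a purely numerical condition on $\Hom$-vanishing in the heart forces the much stronger $\GL$-invariant property of Serre-invariance, is entirely channeled through Theorem \ref{cri_thm_1}.
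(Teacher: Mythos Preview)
Your proposal is correct and follows essentially the same route as the paper: (3) $\Rightarrow$ (2) via Proposition~\ref{S_prop}(2) and Lemma~\ref{equiv-cond}, (2) $\Rightarrow$ (1) via Lemma~\ref{global_homo}, and (1) $\Rightarrow$ (3) by producing a Serre-invariant $\sigma'$ with the same central charge (Corollary~\ref{cor-image-Z}) and invoking Theorem~\ref{cri_thm_1}. You spell out one step the paper leaves implicit, namely that the shift $[2m]$ preserves Serre-invariance, but otherwise the arguments coincide.
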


\begin{proof}
By Proposition \ref{S_prop}, we see (3) implies (2), and by Lemma \ref{global_homo} we have (2) implies (1). 

By Theorem \ref{ext2_stable} and Corollary \ref{cor-image-Z}, if $\homdim(\sigma)\leq 2$, then we can find a Serre-invariant stability condition $\sigma'$ such that $Z=Z'$. Then (1) implies (3) follows from Theorem \ref{cri_thm_1}.
\end{proof}

\subsection{The Serre-invariant component}

Let $\mathsf{K}\subset \Stab(\Ku(X))$ be the subspace of all Serre-invariant stability conditions on $\Ku(X)$. By Theorem \ref{unqiue}, we have $\mathsf{K}=\GL$. It is clear that $\mathsf{K}$ is contractible since $\GL$ is.

\begin{corollary}\label{cor-component}
Let $X$ be a cubic threefold, quartic double solid or Gushel--Mukai threefold. Then $\mathsf{K}$ is a contractible connected component of $\Stab(\Ku(X))$. 
\end{corollary}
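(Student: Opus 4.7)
The plan is to show that $\mathsf{K}$ is simultaneously nonempty, connected, open, and closed in $\Stab(\Ku(X))$; any nonempty connected clopen subset is then a connected component, and the claimed contractibility is recorded independently. Nonemptiness follows from Theorem \ref{stab_Ku}, and contractibility (hence connectedness) is already contained in Remark \ref{stab-rmk}: the uniqueness Theorem \ref{unqiue} yields a homeomorphism $\mathsf{K}\cong \GL \cong \CC \times \mathbb{H}$ in the subspace topology, and $\CC \times \mathbb{H}$ is visibly contractible.

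For closedness, I would invoke Theorem \ref{cri_thm}, which identifies $\mathsf{K}$ with the sublevel set $\{\sigma \in \Stab(\Ku(X)) : \gldim(\sigma) \leq 2\}$. Since $\gldim\colon \Stab(\Ku(X)) \to \mathbb{R}_{\geq 0}$ is continuous by \cite[Lemma 5.7]{IQ:q-stability}, this sublevel set, and hence $\mathsf{K}$, is closed. This is the easy half and is essentially immediate once Theorem \ref{cri_thm} is available.

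For openness, I would apply Brouwer's invariance of domain. Remark \ref{stab-rmk} exhibits $\mathsf{K}$ with its subspace topology as a topological $4$-manifold (homeomorphic to $\CC \times \mathbb{H}$, an open subset of $\mathbb{R}^4$). Bridgeland's Deformation Theorem applied to the rank-two lattice $\Knum$ endows $\Stab(\Ku(X))$ with the structure of a complex $2$-manifold, hence a real $4$-manifold. Thus the inclusion $\mathsf{K} \hookrightarrow \Stab(\Ku(X))$ is a continuous injection between topological manifolds of equal dimension, and invariance of domain forces it to be an open map; equivalently, $\mathsf{K}$ is open in $\Stab(\Ku(X))$. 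As a more hands-on alternative, one could fix $\sigma_0\in \mathsf{K}$ and combine the local homeomorphism property of $\mathcal{Z}$ near $\sigma_0$ with the fact that the $GL^+(2,\mathbb{R})$-orbit of a non-degenerate central charge is open in $\Hom(\Knum,\CC)$, lifting any nearby $\sigma$ to $\sigma_0 \cdot \tilde g$ via continuity of the $\GL$-action; but the invariance-of-domain argument is shorter and more transparent.

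The genuine obstacle in Corollary \ref{cor-component} is entirely absorbed by the structural criterion Theorem \ref{cri_thm}, proved earlier, which provides the clean identification of $\mathsf{K}$ with a sublevel set of a continuous function. Once that is in hand, the remaining deduction is a soft topological argument with three independent inputs: contractibility from Remark \ref{stab-rmk}, closedness from continuity of $\gldim$, and openness from invariance of domain applied to two real $4$-manifolds of matching dimension.
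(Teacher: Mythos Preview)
Your proof is correct and shares the paper's overall strategy: show that $\mathsf{K}$ is connected, open, and closed in $\Stab(\Ku(X))$, with contractibility coming from Remark \ref{stab-rmk} and closedness from Theorem \ref{cri_thm} together with the continuity of $\gldim$. The one point of divergence is the openness step. The paper simply invokes \cite[Remark 3.10]{pertusi2020some}, whereas you argue via Brouwer's invariance of domain, using that both $\mathsf{K}\cong\CC\times\mathbb{H}$ and $\Stab(\Ku(X))$ are topological $4$-manifolds; your hands-on alternative is in fact close in spirit to what that cited remark provides. Your route has the modest advantage of being self-contained, and the invariance-of-domain step is legitimate since the orbit map $\GL\to\mathsf{K}$ is a homeomorphism onto its image in the subspace topology (the central charge of a Serre-invariant stability condition is not contained in a line, so composing with the local homeomorphism $\mathcal{Z}$ one sees the orbit map is itself a local homeomorphism). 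Either way, the real content is absorbed by Theorem \ref{cri_thm}, exactly as you say.
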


\begin{proof}
By \cite[Remark 3.10]{pertusi2020some},  $\mathsf{K}$ is an open subset of a connected component of $\Stab(\Ku(X))$. Since $\gldim\colon \Stab(\Ku(X))\to \mathbb{R}$ is continuous by \cite[Lemma 5.7]{IQ:q-stability}, the subspace $\gldim^{-1}([0,2])\subset \Stab(\Ku(X))$ is closed. Moreover, by Theorem \ref{cri_thm}, $\mathsf{K}=\gldim^{-1}([0,2])$. Since $\mathsf{K}=\GL \cong \CC\times \mathbb{H}$ is connected, open and closed, it is a contractible connected component.
\end{proof}

For an arbitrary stability condition $\sigma$ on $\Ku(X)$, Proposition \ref{S_prop} (5) implies $\homdim(\sigma)\geq 1$. So it is natural to make the following conjecture, which together with Theorem \ref{cri_thm} could describe all stability conditions on $\Ku(X)$:

\begin{conjecture}\label{conj}
Let $X$ be a cubic threefold, quartic double solid or Gushel--Mukai threefold. Then for any stability condition $\sigma$ on $\Ku(X)$, we have $\homdim(\sigma)\leq 2.$
\end{conjecture}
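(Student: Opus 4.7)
The plan is to leverage Theorem \ref{cri_thm_1} by removing its $\homdim(\sigma) \leq 2$ hypothesis; if successful, every stability condition would coincide with a Serre-invariant one via Theorem \ref{unqiue}, and Theorem \ref{cri_thm} would then yield $\homdim(\sigma) \leq 2$ automatically. The main obstacle lies in recovering stability of the distinguished generators $D_1, D_2 \in \Knum$ from Lemma \ref{lem-D1D2} without assuming a bound on $\homdim$. In the existing proofs, Theorem \ref{ext2_stable} uses the spectral-sequence vanishings $E_1^{p,q} = 0$ for $q \geq 3$, which in turn rest on the homological-dimension bound via Lemma \ref{ext1-heart}. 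Our goal is to supply a substitute input from the Calabi--Yau structure of $\Ku(X)$.

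The first step is to show that for an arbitrary $\sigma = (\cA, Z)$, each generator $D_i$ has a single non-vanishing $\cA$-cohomology. The idea is an induction on the total number of non-zero cohomology sheaves, using the iterated Serre identities $S^3_{\Ku(X)} \cong [5]$ (cubic threefolds) or $S^2_{\Ku(X)} \cong [4]$ (QDS/GM) to fold high-degree Ext groups back into a bounded window; combining this with the minimal-Ext input $\ext^1(D_i, D_i) = 2$ and the Weak Mukai Lemma \ref{mukai lemma} should force triviality of any non-trivial decomposition in $\cA$. A parallel Jordan--H\"older argument, using primitivity of $v, w$ and the Euler-form computations \eqref{euler_form_cubic}--\eqref{euler_form_GM}, then upgrades ``lives in $\cA$'' to ``is $\sigma$-stable in $\cA$''. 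The phase inequality \eqref{eq-lemD1D2} should then be forced by Serre duality applied to the non-vanishing maps between $D_1, D_2$ exhibited in the proof of Lemma \ref{lem-D1D2}, together with the observation that $[S_{\Ku(X)}(D_i)]$ and $[D_i]$ differ only by the numerical action of $S_{\Ku(X)}$, which is either trivial or explicit.

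Once $D_1, D_2$ are shown $\sigma$-stable with the correct phase relation, the argument of Theorem \ref{cri_thm_1} goes through verbatim: for an arbitrary heart object $E$ one expresses $[E] = av + bw$, shows via Euler-form positivity that some non-vanishing $\Hom$ against $D_1^{[\pm]}, D_2^{[\pm]}$ is forced, and concludes that $E$ lies (up to shift) in the heart of a chosen Serre-invariant $\sigma'$ with $Z = Z'$. The main difficulty I anticipate is the very first reduction: without $\homdim(\sigma) \leq 2$ the inequality in Lemma \ref{ext1-heart} is not available, and the spectral sequence of Lemma \ref{spec-seq} no longer degenerates at $E_3$ in a useful way. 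Overcoming this will likely require a genuinely new ingredient, for instance a direct proof that the Serre functor $S_{\Ku(X)}$ has bounded ``phase displacement'' on any stability condition --- which, given its fractional Calabi--Yau nature, is plausible but presently outside the toolbox of this paper and is precisely why the statement is left as a conjecture.
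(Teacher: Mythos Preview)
The statement you are addressing is Conjecture~\ref{conj}, which the paper explicitly leaves open; there is no proof in the paper to compare against. Your proposal is not a proof but a research strategy, and you correctly diagnose this yourself in the final paragraph.

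That said, the strategy has a structural circularity worth naming. You propose to remove the hypothesis $\homdim(\sigma)\leq 2$ from Theorem~\ref{cri_thm_1}, so that any $\sigma$ would coincide (up to shift) with a Serre-invariant $\sigma'$ having the same central charge, and then read off $\homdim(\sigma)\leq 2$ from Theorem~\ref{cri_thm}. But every step of the paper's proof of Theorem~\ref{cri_thm_1} --- Lemma~\ref{ext2_inheart}, Theorem~\ref{ext2_stable}, Lemma~\ref{in-same-heart}, Lemma~\ref{lem-D1D2}, and the induction in Claim~2 --- uses $\homdim\leq 2$ in an essential way, not merely to control spectral-sequence degeneration but to force vanishings such as $\Hom(B,A)=0$, $\Ext^2(A,B)=0$, and the phase comparisons via $\Hom(-,-[k])$ for $k\in\{0,1,2\}$. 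Your suggestion to replace Lemma~\ref{ext1-heart} by a fractional Calabi--Yau folding argument is the right instinct, but note that the Serre relations $S^3\cong[5]$ or $S^2\cong[4]$ only relate $\Ext^k$ to $\Ext^{5-k}$ or $\Ext^{4-k}$ respectively; they do not by themselves bound the range of non-vanishing Ext groups between arbitrary heart objects, which is exactly what $\homdim\leq 2$ provides. Without such a bound, even the base case of your induction --- showing each $D_i$ has a single nonzero $\cA$-cohomology --- does not follow from the Weak Mukai Lemma, since the hypothesis $\Hom(G,F[2])=0$ of Lemma~\ref{mukai lemma} is unavailable.

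Your closing assessment is accurate: a proof would require a new input, most plausibly a direct bound on the phase displacement of $S_{\Ku(X)}$ valid on all of $\Stab(\Ku(X))$, and no such result is currently known.
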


\bibliographystyle{plain}
{\small{\bibliography{stab}}}

\begin{thebibliography}{10}

\bibitem{bayer:derived-auto-K3}
Arend Bayer and Tom Bridgeland.
\newblock Derived automorphism groups of {K}3 surfaces of {P}icard rank 1.
\newblock {\em Duke Math. J.}, 166(1):75--124, 2017.

\bibitem{bayer2017stability}
Arend Bayer, Mart\'{\i} Lahoz, Emanuele Macr\`{i}, and Paolo Stellari.
\newblock Stability conditions on {K}uznetsov components.
\newblock {\em Ann. Sci. \'{E}c. Norm. Sup\'{e}r. (4)}, 56(2):517--570, 2023.
\newblock With an appendix by Bayer, Lahoz, Macr\`{i}, Stellari and X. Zhao.

\bibitem{bayer:mmp-k3}
Arend Bayer and Emanuele Macr\`{i}.
\newblock M{MP} for moduli of sheaves on {K}3s via wall-crossing: nef and movable cones, {L}agrangian fibrations.
\newblock {\em Invent. Math.}, 198(3):505--590, 2014.

\bibitem{bayer:projectivity-k3}
Arend Bayer and Emanuele Macr\`{i}.
\newblock Projectivity and birational geometry of {B}ridgeland moduli spaces.
\newblock {\em J. Amer. Math. Soc.}, 27(3):707--752, 2014.

\bibitem{bayer2016space}
Arend Bayer, Emanuele Macr\`{i}, and Paolo Stellari.
\newblock The space of stability conditions on abelian threefolds, and on some {C}alabi-{Y}au threefolds.
\newblock {\em Invent. Math.}, 206(3):869--933, 2016.

\bibitem{bridgeland}
Tom Bridgeland.
\newblock Stability conditions on triangulated categories.
\newblock {\em Ann. of Math. (2)}, 166(2):317--345, 2007.

\bibitem{bridgeland:geo-from-DT}
Tom Bridgeland.
\newblock Geometry from {D}onaldson-{T}homas invariants.
\newblock In {\em Integrability, quantization, and geometry {II}. {Q}uantum theories and algebraic geometry}, volume 103.2 of {\em Proc. Sympos. Pure Math.}, pages 1--66. Amer. Math. Soc., Providence, RI, [2021] \copyright 2021.

\bibitem{bridgeland:stab-A2-quiver}
Tom Bridgeland, Yu~Qiu, and Tom Sutherland.
\newblock Stability conditions and the {$A_2$} quiver.
\newblock {\em Adv. Math.}, 365:107049, 33, 2020.

\bibitem{bridgeland:quadratic-diff}
Tom Bridgeland and Ivan Smith.
\newblock Quadratic differentials as stability conditions.
\newblock {\em Publ. Math. Inst. Hautes \'{E}tudes Sci.}, 121:155--278, 2015.

\bibitem{bridgeland:HPK-by-DT}
Tom Bridgeland and Ian A.~B. Strachan.
\newblock Complex hyperk\"{a}hler structures defined by {D}onaldson-{T}homas invariants.
\newblock {\em Lett. Math. Phys.}, 111(2):Paper No. 54, 24, 2021.

\bibitem{hannah:stab-free-abel-quotient}
Hannah Dell.
\newblock {Stability Conditions on Free Abelian Quotients}.
\newblock {\em arXiv preprint, arXiv:2307.00815}, 2023.

\bibitem{dimitrov2019bridgeland}
George Dimitrov and Ludmil Katzarkov.
\newblock Bridgeland stability conditions on wild {K}ronecker quivers.
\newblock {\em Adv. Math.}, 352:27--55, 2019.

\bibitem{liu:cat-torelli-delpezzo}
Soheyla Feyzbakhsh, Zhiyu Liu, and Shizhuo Zhang.
\newblock {New perspectives on categorical Torelli theorems for del Pezzo threefolds }.
\newblock {\em arXiv preprint, arXiv:2304.01321}, 2023.

\bibitem{FeyzbakhshPertusi2021stab}
Soheyla Feyzbakhsh and Laura Pertusi.
\newblock Serre-invariant stability conditions and {U}lrich bundles on cubic threefolds.
\newblock {\em \'{E}pijournal G\'{e}om. Alg\'{e}brique}, 7:Art. 1, 32, 2023.

\bibitem{fu:stab}
Lie Fu, Chunyi Li, and Xiaolei Zhao.
\newblock Stability manifolds of varieties with finite {A}lbanese morphisms.
\newblock {\em Trans. Amer. Math. Soc.}, 375(8):5669--5690, 2022.

\bibitem{haiden:flat-surface}
Fabian Haiden, Ludmil Katzarkov, and Maxim Kontsevich.
\newblock Flat surfaces and stability structures.
\newblock {\em Publ. Math. Inst. Hautes \'{E}tudes Sci.}, 126:247--318, 2017.

\bibitem{halpern-leistern:NMMP}
Daniel Halpern-Leistner.
\newblock {The noncommutative minimal model program}.
\newblock {\em arXiv preprint, arXiv:2301.13168}, 2023.

\bibitem{IQ:q-stability}
Akishi Ikeda and Yu~Qiu.
\newblock {$q$}-stability conditions on {C}alabi-{Y}au-{$\Bbb X$} categories.
\newblock {\em Compos. Math.}, 159(7):1347--1386, 2023.

\bibitem{JLLZ}
Augustinas Jacovskis, Xun Lin, Zhiyu Liu, and Shizhuo Zhang.
\newblock {Categorical Torelli theorems for Gushel--Mukai threefolds}.
\newblock {\em arXiv preprint, arXiv:2108.02946}, 2021.

\bibitem{JLZ2022}
Augustinas Jacovskis, Zhiyu Liu, and Shizhuo Zhang.
\newblock {Brill--Noether theory for Kuznetsov components and refined categorical Torelli theorems for index one Fano threefolds}.
\newblock {\em arXiv preprint, arXiv:2207.01021}, 2022.

\bibitem{kuznetsov2003derived}
Alexander Kuznetsov.
\newblock Derived category of a cubic threefold and the variety {$V_{14}$}.
\newblock {\em Tr. Mat. Inst. Steklova}, 246:183--207, 2004.

\bibitem{kuznetsov2005derived}
Alexander Kuznetsov.
\newblock Derived categories of the {F}ano threefolds {$V_{12}$}.
\newblock {\em Mat. Zametki}, 78(4):579--594, 2005.

\bibitem{kuznetsov2009derived}
Alexander Kuznetsov.
\newblock Derived categories of {F}ano threefolds.
\newblock {\em Tr. Mat. Inst. Steklova}, 264:116--128, 2009.

\bibitem{kuz:fractional-CY}
Alexander Kuznetsov.
\newblock Calabi-{Y}au and fractional {C}alabi-{Y}au categories.
\newblock {\em J. Reine Angew. Math.}, 753:239--267, 2019.

\bibitem{kuznetsov2018derived}
Alexander Kuznetsov and Alexander Perry.
\newblock Derived categories of {G}ushel-{M}ukai varieties.
\newblock {\em Compos. Math.}, 154(7):1362--1406, 2018.

\bibitem{li:stab-P2}
Chunyi Li.
\newblock The space of stability conditions on the projective plane.
\newblock {\em Selecta Math. (N.S.)}, 23(4):2927--2945, 2017.

\bibitem{macri2007stability}
Emanuele Macr\`{i}.
\newblock Stability conditions on curves.
\newblock {\em Math. Res. Lett.}, 14(4):657--672, 2007.

\bibitem{okada:stab-cy-surface}
So~Okada.
\newblock On stability manifolds of {C}alabi-{Y}au surfaces.
\newblock {\em Int. Math. Res. Not.}, pages Art. ID 58743, 16, 2006.

\bibitem{okada:stab-P1}
So~Okada.
\newblock Stability manifold of {${\mathbb{P}}^1$}.
\newblock {\em J. Algebraic Geom.}, 15(3):487--505, 2006.

\bibitem{orlov:matrix-factor-equiv}
Dmitri Orlov.
\newblock Derived categories of coherent sheaves and triangulated categories of singularities.
\newblock In {\em Algebra, arithmetic, and geometry: in honor of {Y}u. {I}. {M}anin. {V}ol. {II}}, volume 270 of {\em Progr. Math.}, pages 503--531. Birkh\"{a}user Boston, Boston, MA, 2009.

\bibitem{orlov:noncomm-scheme}
Dmitri Orlov.
\newblock Smooth and proper noncommutative schemes and gluing of {DG} categories.
\newblock {\em Adv. Math.}, 302:59--105, 2016.

\bibitem{iskovskikh1999fano}
A.~N. Parshin and I.~R. Shafarevich, editors.
\newblock {\em Algebraic geometry. {V}}, volume~47 of {\em Encyclopaedia of Mathematical Sciences}.
\newblock Springer-Verlag, Berlin, 1999.
\newblock Fano varieties, A translation of {\it Algebraic geometry. 5} (Russian), Ross. Akad. Nauk, Vseross. Inst. Nauchn. i Tekhn. Inform., Moscow.

\bibitem{Pertusi2021serreinv}
Laura Pertusi and Ethan Robinett.
\newblock Stability conditions on {K}uznetsov components of {G}ushel-{M}ukai threefolds and {S}erre functor.
\newblock {\em Math. Nachr.}, 296(7):2975--3002, 2023.

\bibitem{pertusi2020some}
Laura Pertusi and Song Yang.
\newblock Some remarks on {F}ano three-folds of index two and stability conditions.
\newblock {\em Int. Math. Res. Not. IMRN}, (17):12940--12983, 2022.

\bibitem{pirozhkov2020admissible}
Dmitrii Pirozhkov.
\newblock Admissible subcategories of del {P}ezzo surfaces.
\newblock {\em Adv. Math.}, 424:Paper No. 109046, 62, 2023.

\bibitem{qiu:stab-dynkin}
Yu~Qiu.
\newblock Stability conditions and quantum dilogarithm identities for {D}ynkin quivers.
\newblock {\em Adv. Math.}, 269:220--264, 2015.

\bibitem{qiu:stab-and-braid-group}
Yu~Qiu and Jon Woolf.
\newblock Contractible stability spaces and faithful braid group actions.
\newblock {\em Geom. Topol.}, 22(6):3701--3760, 2018.

\bibitem{rekusi:contract-stab-surface}
Nick Rekuski.
\newblock {Contractibility of the Geometric Stability Manifold of a Surface}.
\newblock {\em arXiv preprint, arXiv:2310.10499}, 2023.

\end{thebibliography}

\end{document}